\documentclass[11pt]{article}
\usepackage[a4paper,margin=28mm]{geometry}
\usepackage{amsmath,amssymb,amsthm,mathtools,mathrsfs}
\usepackage{microtype}
\usepackage{enumitem}
\usepackage{graphicx}
\usepackage{float}
\usepackage[hidelinks]{hyperref}
\newtheorem{theorem}{Theorem}[section]
\newtheorem{proposition}[theorem]{Proposition}
\newtheorem{lemma}[theorem]{Lemma}
\newtheorem{corollary}[theorem]{Corollary}
\newtheorem{remark}[theorem]{Remark}
\newcommand{\Ric}{\operatorname{Ric}}
\newcommand{\Rm}{\operatorname{Rm}}
\newcommand{\RF}{\mathrm{RF}}
\newcommand{\PE}{\mathrm{PE}}

\title{\textbf{Quantized Einstein Metrics on \(S^7\) with \(SU(3)\)-Symmetry}}
\author{Anna Siffert}
\date{}

\begin{document}
\maketitle

\begin{abstract}
We prove that the standard cohomogeneity-one action of \(SU(3)\) on \(S^7\),
with principal orbit the Wallach flag manifold \(SU(3)/T^2\), admits
infinitely many invariant Einstein metrics.

The proof uses a detection approach to the Einstein boundary-value problem.
Starting from one singular orbit, we follow the Einstein solutions only to a
canonical hypersurface and measure there how far they are from closing
smoothly.  When the singular-orbit scale becomes small, this closing problem
is governed by a Ricci-flat limiting solution.  The linearisattion about its
limiting cone has an oscillatory mode.  As the scale shrinks, this oscillation
repeatedly changes the sign of the closing error, producing infinitely many
parameter values for which the metric closes.  The closing scales satisfy an asymptotic logarithmic quantization law,
with successive ratio tending to \(e^{-2\pi/\sqrt{15}}\).  We also determine
the geometry of the resulting sequence.  Away from the two singular orbits
the metrics converge to the singular sine cone over the non-normal Einstein
metric on \(SU(3)/T^2\), while after rescaling by the square of the
singular-orbit scale at either end they converge to the same complete
Ricci-flat threshold metric.  Their curvature is of order \(b_n^{-2}\), so
the logarithmic phase law induces a corresponding quantization of the focal
curvature scale.
\end{abstract}

\section{Introduction}

Einstein metrics on spheres form one of the basic testing grounds for the
interaction between topology, symmetry, and nonlinear geometric equations.
Even under a fixed cohomogeneity-one group action, however, the Einstein
equation is a singular two-point boundary-value problem, and multiplicity is
usually difficult to detect from the local initial data.  The purpose of this
paper is to show that the standard adjoint \(SU(3)\)-action on \(S^7\)
exhibits an infinite multiplicity phenomenon within one fixed symmetry
class.

Identify \(S^7\) with the unit sphere in \(\mathfrak{su}(3)\), with
\(SU(3)\) acting by the adjoint representation.  The orbit space is an
interval, a principal orbit is the six-dimensional Wallach flag manifold
\[
 SU(3)/T^2,
\]
and the two endpoint orbits are copies of \(\mathbb{CP}^2\).  Our main result
is the following.

\begin{theorem}[Main theorem]\label{thm:main}
For the standard cohomogeneity-one adjoint action of \(SU(3)\) on \(S^7\),
there exist infinitely many pairwise non-isometric smooth invariant Einstein
metrics.  They may be normalized by
\[
 \Ric(g_n)=6g_n,
\]
and have principal orbit \(SU(3)/T^2\) and two singular orbits
\(\mathbb{CP}^2\).

More precisely, the metrics can be chosen so that, if \(b_n\) denotes the
scale of the initial singular orbit and \(\beta_n\) its anisotropy parameter,
then
\[
 b_n\longrightarrow0,
 \qquad
 \beta_n\longrightarrow c_*<0,
\]
where \(c_*\) is the Ricci-flat threshold of
Proposition~\ref{prop:threshold}.  Moreover, the sequence may be indexed so that, for some phase constant
\(\theta_*\in\mathbb R/\pi\mathbb Z\),
\[
 \frac{\sqrt{15}}2\log\frac1{b_n}
 =n\pi+\theta_*+o(1),
\]
and consequently
\[
 \frac{b_{n+1}}{b_n}\longrightarrow e^{-2\pi/\sqrt{15}}.
\]
Let \(h_2\) denote the nonnormal Einstein metric on \(SU(3)/T^2\)
whose Ricci-flat cone is
\[
 dr^2+r^2h_2,
 \qquad
 f_1=f_2=\frac r{\sqrt5},\quad
 f_3=\sqrt{\frac25}\,r,
\]
and let \(g_{\RF,*}\) be the complete Ricci-flat threshold metric of
Proposition~\ref{prop:threshold}.  If \(p_n^\pm\) are points on the two
singular orbits, then
\[
 (S^7,b_n^{-2}g_n,p_n^\pm)
 \longrightarrow (M_*,g_{\RF,*},p_*)
\]
in pointed smooth Cheeger--Gromov sense.  On the complementary scale,
\[
 g_n\longrightarrow dt^2+\sin^2t\,h_2
\]
smoothly locally on \((0,\pi)\times SU(3)/T^2\).  Finally, there are
constants \(0<c<C<\infty\) such that
\[
 c b_n^{-2}\le \sup_{S^7}|\Rm(g_n)|\le C b_n^{-2}.
\]
Let \(K_{02}^{(n)}\) denotes the sectional curvature of \(g_n\) in the
two-plane spanned by the radial direction and a unit vector tangent to
the third isotropy summand. At eithe singular orbit,
$$
 b_n^2K_{02}^{(n)}\longrightarrow-\frac{1+2c_*^2}{2}.
$$
Consequently,
$$
 \frac{|K_{02}^{(n+1)}|}{|K_{02}^{(n)}|}
 \longrightarrow e^{4\pi/\sqrt{15}}.
$$
\end{theorem}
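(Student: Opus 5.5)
We follow the detection strategy from the abstract. It uses the symmetry of the problem under the involution exchanging the two singular orbits \(\mathbb{CP}^2\); this involution is \(X\mapsto -X\) on \(\mathfrak{su}(3)\), composed with an outer automorphism if needed. Write the metric as \(dt^2+f_1^2\,Q|_{\mathfrak m_1}+f_2^2\,Q|_{\mathfrak m_2}+f_3^2\,Q|_{\mathfrak m_3}\). Near the singular orbit at \(t=0\), one summand collapses and the other two stay at a common scale of order \(b\), split at second order by the anisotropy \(\beta\). The Eschenburg--Wang initial value theorem gives a two-parameter family \(\gamma_{b,\beta}(t)\) of smooth local Einstein solutions with \(\Ric=6g\). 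Since \(\mathbb Z_2\) reflection is available, a metric closes smoothly at the opposite end if and only if the trajectory meets the canonical symmetric hypersurface \(\Sigma\) (the fixed locus of the reflection in phase space, e.g.\ \(\dot f_1 = \dot f_2=0\), \(f_1=f_2\) after relabelling by the Weyl-group symmetry permuting summands) with the correct data. This produces a closing function \(\Phi(b,\beta)\), taking values in a space of the same dimension as the free parameters after imposing the conservation law. Zeros of \(\Phi\) are exactly the smooth invariant Einstein metrics on \(S^7\).

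The analysis has three scale regimes. (i) \emph{Inner region.} Rescale by \(b^{-2}\) (equivalently \(t=b\,s\), \(f_i=b\,\tilde f_i\)). As \(b\to0\) the equation converges to the Ricci-flat cohomogeneity-one system. By Proposition~\ref{prop:threshold}, at \(\beta=c_*\) the Ricci-flat solution \(g_{\RF,*}\) is complete and asymptotic to the cone \(dr^2+r^2h_2\). (ii) \emph{Neck region.} Linearise the Ricci-flat flow about this cone in the logarithmic variable \(\log r\). The relevant anisotropic mode has complex indicial roots \(-\tfrac52\pm i\tfrac{\sqrt{15}}2\), so perturbations behave like \(r^{-5/2}\cos(\tfrac{\sqrt{15}}2\log r+\phi)\). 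Here \(\phi\) depends smoothly on \(\beta\) near \(c_*\), and the amplitude is transverse to \(\beta-c_*\). (iii) \emph{Outer region.} For \(t\) bounded away from \(0\) the solution is a small perturbation of the sine cone \(dt^2+\sin^2t\,h_2\), which is itself symmetric about \(t=\pi/2\). Linearising about the sine cone at \(\Sigma\) shows that the closing error equals, to leading order, an amplitude times \(\cos(\tfrac{\sqrt{15}}2\log\tfrac1b+\theta(\beta))\), plus \(o(1)\) relative errors. Matching the expansions of (ii) and (iii) in the overlap \(b\ll t\ll1\) gives
\[
\Phi(b,\beta)=A(\beta-c_*)\,\cdot(\dots)+B\,b^{\kappa}\bigl(\cos(\tfrac{\sqrt{15}}2\log\tfrac1b+\theta_0)+o(1)\bigr).
\]
One component of \(\Phi\) can be solved for \(\beta=\beta(b)\to c_*\) by the implicit function theorem. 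The remaining scalar component changes sign each time the phase advances by \(\pi\). The intermediate value theorem then gives zeros \(b_n\) with \(\tfrac{\sqrt{15}}2\log\tfrac1{b_n}=n\pi+\theta_*+o(1)\), from which the ratio \(e^{-2\pi/\sqrt{15}}\) follows.

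The geometric statements follow from the same matching. Cheeger--Gromov convergence of \(b_n^{-2}g_n\) to \(g_{\RF,*}\) is the inner convergence, applied at both ends by the reflection symmetry. Local smooth convergence to the sine cone is the outer estimate. The curvature bounds come from three facts: curvature is \(O(1)\) in the outer region, it decays like \(r^{-2}\) on the rescaled neck, and it is bounded on the compact core of \(g_{\RF,*}\), which gives \(\sup|\Rm|\sim b_n^{-2}\). The formula for \(K_{02}\) at the singular orbit is the value of the mixed radial sectional curvature of \(g_{\RF,*}\) at \(s=0\), computed from its second-order Taylor data, \(\ddot{\tilde f}_3/\tilde f_3\) at \(s=0\) expressed through \(c_*\). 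This gives \(-\tfrac{1+2c_*^2}{2}\), and the ratio \(e^{4\pi/\sqrt{15}}\) follows from \(b_n^{-2}\). The metrics are pairwise non-isometric because \(\sup|\Rm|\to\infty\) along the sequence, so we can pass to a subsequence of distinct curvature scales and reindex.

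The main obstacle is the uniform matching estimate. We must control the nonlinear solution across the neck, uniformly in \(b\), with errors \(o(1)\) relative to the oscillating leading term. A separate point is making sure the oscillatory amplitude \(B\neq0\), meaning the threshold solution actually excites the complex mode rather than only faster-decaying modes. We would handle the first with weighted estimates in \(\log r\) on the linearisation, in the style of a Lyapunov--Perron / variation-of-constants argument on an exponential dichotomy. For the second we would use the transversality statement of Proposition~\ref{prop:threshold}: varying \(\beta\) across \(c_*\) moves the solution off the cone's stable manifold, and that manifold is exactly the kernel of the oscillatory amplitude.
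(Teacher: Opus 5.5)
Your architecture matches the paper's: Ricci-flat threshold as inner model, oscillatory cone mode, sine-cone outer region, matching, a sign-change argument, and geometry read off from the matching. But the step you flag as essential, that the threshold carries a nonzero oscillatory amplitude, is argued incorrectly, and the missing ingredient is Weyl parity.

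The roots $-\frac52\pm i\frac{\sqrt{15}}2$ have negative real part, so the oscillatory directions are \emph{stable}. At $p_2$ the shape linearization has exactly one unstable direction, the even real mode $\beta_+=\frac{-5+3\sqrt5}{2}$. Its stable space is three-dimensional, spanned by the even mode $\beta_-$ and the odd complex pair. The threshold lies on the stable manifold, and moving $\beta$ across $c_*$ pushes trajectories off it along the even unstable direction. That is the detuning seen by the nonoscillating error $F_2=L_3$, not by $F_1$. So the stable manifold is not the kernel of the oscillatory amplitude.

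The amplitude is the threshold's own coefficient along the slow oscillatory directions. It vanishes exactly when the threshold enters $p_2$ at the faster rate, along the strong-stable curve tangent to the $\beta_-$ direction, and Proposition~\ref{prop:threshold} does not exclude this. The paper excludes it by parity. The odd variables $o=(\omega_1-\omega_2,d_1-d_2)$ satisfy an exactly homogeneous equation $o'=A(e,o)o$, whose renormalized transport to $p_2$ is invertible. Hence $Q=0$ would force $o\equiv0$, i.e.\ $f_1\equiv f_2$, contradicting $f_1(0)=0<f_2(0)$.

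The same structure is what makes your ``main obstacle'' tractable. A generic weighted dichotomy or Lyapunov--Perron estimate on the full system cannot give errors small relative to the odd signal. That signal is $O(b^{5/4})$ at the overlap $t=b^{1/2}$ and $O(b^{5/2})$ at the reflection section. By contrast, the positive-Einstein correction is $O(b^2e^{2s})=O(b)$ at the overlap, and the even detuning is $O(1)$ in the outer region. Relative control (Lemmas~\ref{lem:weighted-inner} and~\ref{lem:outer-parity}) holds only because the odd equation \eqref{eq:oddexact} has no additive forcing and the even variables feel the odd ones only quadratically. This also gives the triangular structure in which the even detuning does not enter $F_1$ at first order; your formula for $\Phi$ leaves this unaddressed.

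Smaller points:
\begin{itemize}
\item The reflection conditions are $f_1=f_2$, $\dot f_1=-\dot f_2$, $\dot f_3=0$, not $\dot f_1=\dot f_2=0$. Also, only reflection-symmetric closings are captured, so zeros of $\Phi$ are not all invariant Einstein metrics.
\item The detuning reaches the outer detector amplified by $b^{-\beta_+}$, so the right parameter is $\mu=a(c)\,b^{-\beta_+}$, not $\beta-c_*$.
\item An implicit-function step in $\beta$ would need $a'(c_*)\ne0$, whereas only $a(c)=\kappa(c-c_*)^m+\cdots$ with $m$ odd is known. It would also need $C^1$ matching estimates. Poincar\'e--Miranda on $(b,\mu)$-rectangles needs only the $C^0$ asymptotics.
\end{itemize}
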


Thus the same complex cone exponent controls both the multiplicity of the
compact Einstein metrics and the geometric spacing of their bubbling
curvature scales.  Here \emph{bubbling} means that curvature concentrates near
the two singular orbits on the shrinking scale $b_n$, and that after rescaling
those regions by $b_n^{-2}$ one obtains a noncompact complete Ricci-flat
limit.  On the original scale the complementary region converges instead to
a singular Einstein sine cone.  The proof of the theorem occupies the
remainder of the paper.

\subsection{The boundary-value problem and the mechanism}

An invariant metric is determined along the orbit interval by three positive
metric functions.  The Einstein equation is therefore a singular nonlinear
ODE boundary-value problem, with smooth-collapse conditions at the two
singular orbits.  The local Einstein germs at one singular orbit form a
two-parameter family: we write $b>0$ for the size of that orbit and $\beta$
for its anisotropy parameter.

Rather than shooting all the way to the second singular orbit, we stop each
solution at the canonical hypersurface where the mean curvature $H$ first
vanishes.  Reflection across this orbit closes the half-metric precisely when
two remaining errors vanish.  These two errors form a detector
$\mathcal F=(F_1,F_2)$.  Thus the boundary-value problem is reduced to finding
zeros of a two-dimensional map.  Section~2 gives the invariant equations,
singular data, and the precise definition of this detector.

The source of infinitely many zeros appears as $b\downarrow0$.  Rescaling near
the shrinking singular orbit removes the positive Einstein constant and
produces a Ricci-flat limiting problem.  We identify a critical Ricci-flat
trajectory which approaches a distinguished cone.  Its linearization has one
relevant nonoscillatory mode and one complex mode.  The latter oscillates in
$\log b$.  We prove that this oscillatory mode is genuinely present and
survives transport back to the positive-Einstein problem.  Consequently one
closing error changes sign repeatedly while the other is controlled by the
second initial parameter, producing a simultaneous zero at infinitely many
scales.

This also explains why the sequence is difficult to see by direct shooting:
its scales accumulate geometrically at $b=0$, whereas in the logarithmic
variable $-\log b$ they become asymptotically periodic.  The detection and
response viewpoints of~\cite{SiffertDetection,SiffertResponse} provide useful
organizing language for this reduction, but all quantities and estimates
needed in the proof are defined explicitly below.

\paragraph{Numerical origin of the problem.}
The infinite family was first suggested by a simple shooting experiment.
Starting from the \(\mathbb{CP}^2\) singular orbit, we integrated the invariant
Einstein equations with initial parameters \((b,\beta)\) and stopped at the
first hypersurface \(H=0\).  We then adjusted \(\beta\) so that the even
closing error \(F_2=L_3\) vanished and followed the resulting numerical
branch as \(b\) decreased.  Along this branch the remaining odd closing error
\(F_1=f_1-f_2\) exhibited a persistent oscillation: after multiplication by
\(b^{-5/2}\), it was approximately periodic as a function of \(-\log b\); see
Figure~\ref{fig:numerical-response}.  This suggested an infinite sequence of
closing parameters accumulating at \(b=0\).  The proof below explains this
numerical observation rather than relying on it: after rescaling, the branch
is governed by a Ricci-flat threshold solution, and the odd linearization at
its limiting cone has the complex indicial roots
\[
 -\frac52\pm i\frac{\sqrt{15}}2.
\]
Thus the logarithmic oscillation seen numerically is precisely the response
predicted by the limiting cone.  No numerical data enter the proof.

\begin{figure}[H]
\centering
\includegraphics[width=.78\textwidth]{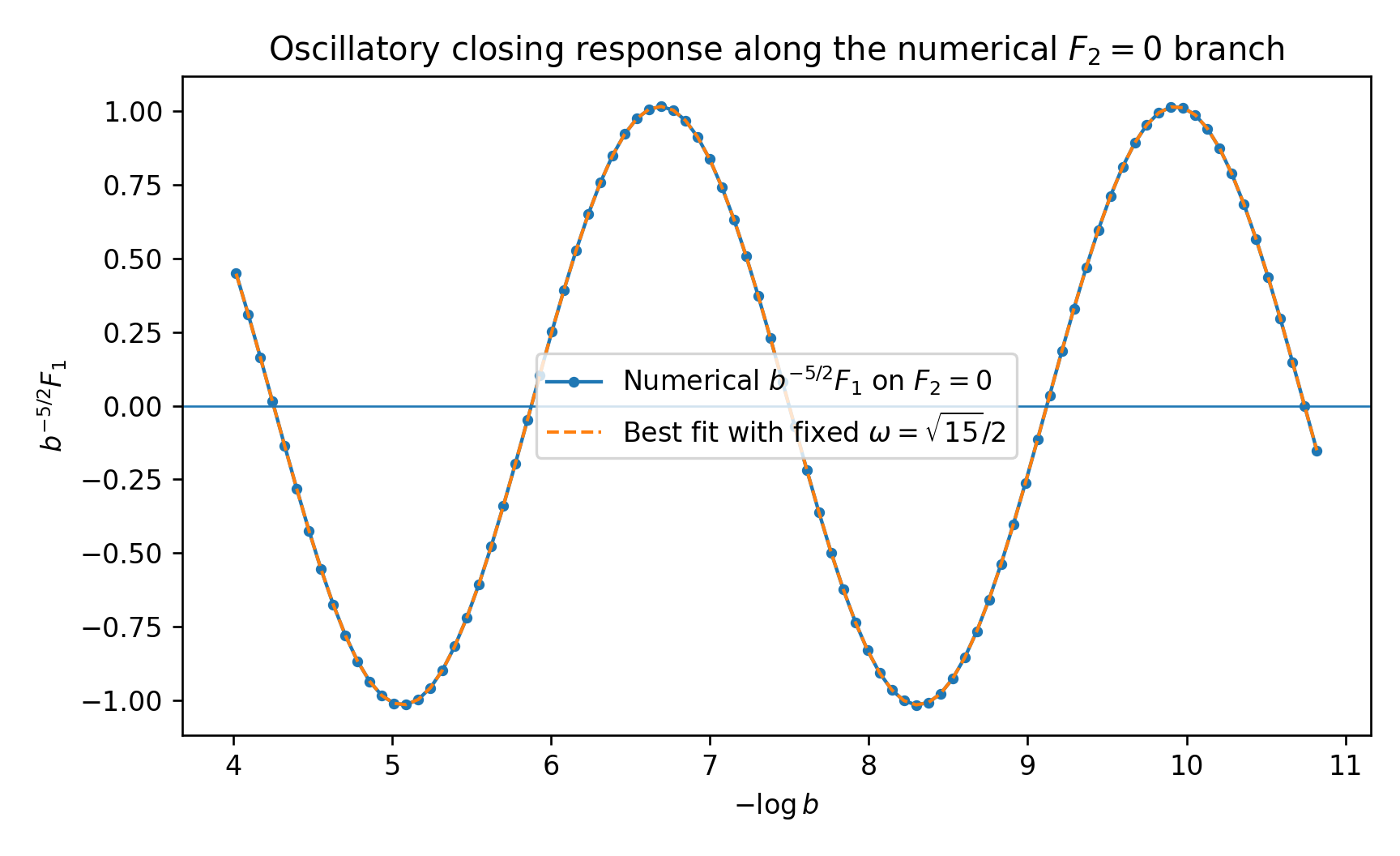}
\caption{Numerical motivation for the proof.  Along the numerically
determined near-closing branch \(F_2\approx0\), the normalized remaining
closing error \(b^{-5/2}F_1\) oscillates almost periodically as a function of
\(-\log b\).  The dashed curve is a best amplitude-and-phase fit with the
frequency fixed at the theoretically predicted value
\(\omega=\sqrt{15}/2\).  Successive zero crossings suggest the sequence of
Einstein metrics established analytically below.}
\label{fig:numerical-response}
\end{figure}

The result is specific to this symmetry class.  Böhm proved that \(S^7\)
carries infinitely many pairwise non-isometric positive Einstein
metrics~\cite{Bohm}; see also~\cite{BGK}.  Here we prove an infinite sequence
for the standard adjoint \(SU(3)\)-action itself, whose principal orbit is the
Wallach flag manifold \(SU(3)/T^2\).

Two earlier Wallach-space results are particularly relevant.  Chiu studies
cohomogeneity-one Einstein equations with Wallach principal orbits
numerically~\cite{Chiu}.  Chi's Ricci-flat analysis~\cite{Chi} enters in a
different role from the compact Einstein metrics constructed here.  When the
initial singular-orbit scale tends to zero, the rescaled positive-Einstein
equation loses its Einstein constant and converges to the Ricci-flat focal
problem; Chi's work provides the invariant-region input that we use on one
side of this limiting family.  Thus the Ricci-flat solutions are blow-up
models for the degeneration, not members of the compact family constructed
here.  The critical Ricci-flat trajectory needed here, its convergence to the
relevant cone, the nonvanishing oscillatory response, and the passage back to
compact positive-Einstein metrics are established below.

There is also recent work in which Ricci-flat cone dynamics is used to
organize cohomogeneity-one Einstein shooting.  Nienhaus--Wink
\cite{NienhausWink} prove the existence of three non-round Einstein metrics on
\(S^{10}\) by a phase-space analysis centered on a Ricci-flat trajectory, and
Huang \cite{Huang} uses precise winding information for Ricci-flat
trajectories; this yields an \(SO(3)\times SO(8)\)-invariant positive
Einstein metric on \(S^3\times S^7\).  These works are methodologically close in their
use of Ricci-flat limiting dynamics, but the mechanism needed here is
different in two respects: the limiting object is the Wallach threshold
selected below, and the infinite sequence is produced by a nonzero complex
odd response together with a two-component closing detector.  In particular,
the present argument does not infer compact solutions merely from winding of
a Ricci-flat trajectory.

The proof can therefore be read in four steps.  First we identify the
critical Ricci-flat trajectory obtained in the small-\(b\) limit.  Second we
analyze the two linearized modes at its limiting cone and prove that the
oscillatory one has nonzero amplitude.  Third we transport these two
variations back to the exact Einstein closing errors.  Finally their repeated
sign changes produce a simultaneous zero and hence a smooth closed metric.
Repeating the last step at successively smaller scales gives the infinite
sequence.

\paragraph{Organization of the paper.}
Section~2 sets up the invariant Einstein equations, singular initial data,
reflection detector, and phase--response reduction.  Section~3 constructs and
analyzes the critical Ricci-flat threshold.  Sections~4 and~5 identify the
linear responses at the limiting cone and prove that the oscillatory response
is nonzero.  Sections~6 and~7 transport these responses to the exact
positive-Einstein detector.  Section~8 closes the metrics by repeated phase
rectangles and proves the logarithmic quantization law.  Section~9 describes
the geometry of the resulting sequence, including the two Ricci-flat bubbles,
the singular sine-cone limit, and the curvature scale.

\subsection*{Acknowledgements}

The author used OpenAI's ChatGPT as an interactive assistant during the
preparation of this manuscript for exposition, organization, creating the pictures,
proofreading, and suggestions for presenting and checking arguments.
All these contributions were independently verified by the author,
who takes full responsibility for the mathematical content and
conclusions of the paper.

\section{The invariant Einstein problem}\label{sec:invariant-problem}

\subsection{Metric and singular data}

We now introduce the notation needed for the proof.  The isotropy
representation of \(SU(3)/T^2\) splits into three inequivalent real
two-dimensional summands
\[
 \mathfrak m_1\oplus\mathfrak m_2\oplus\mathfrak m_3.
\]
With a fixed invariant background form \(Q\), the metrics considered here are
\begin{equation}\label{eq:metric}
 g=dt^2+f_1(t)^2Q|_{\mathfrak m_1}
       +f_2(t)^2Q|_{\mathfrak m_2}
       +f_3(t)^2Q|_{\mathfrak m_3},
\end{equation}
The coefficient \(1\) of \(dt^2\) means that \(t\) is arclength along a
unit-speed normal geodesic.  This is the standard radial gauge, not an
additional homothety normalization.  We normalize the Einstein scale by
\[
 \Ric(g)=6g.
\]
This fixes the overall homothety scale of the metric.  It does not prescribe
the length of the cohomogeneity-one orbit interval: that length is determined
dynamically by the closing solution.

At the left singular orbit the first summand collapses.  In the unit-speed
radial gauge, smoothness across the singular orbit is expressed by the usual
parity conditions
\[
 f_1(-t)=-f_1(t),\qquad f_2(-t)=f_3(t).
\]
Thus the collapsing factor is odd with unit first derivative, while the two
noncollapsing factors are exchanged by the local Weyl symmetry.  The leading
singular data may therefore be written
\[
 f_2(0)=f_3(0)=b,\qquad
 f_2'(0)=-f_3'(0)=\beta,\qquad
 f_1'(0)=1.
\]
Here \(b>0\) is the size of the singular orbit and \(\beta\) measures its
anisotropy.  Once \(t\) is the unit-speed normal coordinate and \(Q\) is fixed,
the condition \(f_1'(0)=1\) is the smoothness condition for the collapsing
two-sphere direction; it is not a further freedom used to set a geometric
scale.  These are the two parameters that will eventually be adjusted
to satisfy the two closing conditions.

The initial point is singular, so this is not an ordinary regular ODE
problem at \(t=0\).  We use the local existence theory of
Eschenburg--Wang~\cite{EW}.  For completeness, we fix a Weyl-equivariant
diagonal branch of their singular construction.  On this branch the germ
depends real analytically on the leading data.  This follows from the
singular integral formulation in the proof of the Eschenburg--Wang initial
value theorem: after the recursive formal coefficients have been fixed, the
remainder is obtained as the fixed point of a contraction depending
analytically on the finite-dimensional initial data; the analytic
implicit-function theorem then gives analytic dependence of that fixed point.
We recall this argument at the only place where it is needed,
Lemma~\ref{lem:signed-a}.  No uniqueness among all possible singular germs is
required.

\subsection{Einstein equations and the closing detector}

Set
\[
 L_i=\frac{f_i'}{f_i},\qquad H=2(L_1+L_2+L_3).
\]
We fix the Einstein-equation convention once and for all.  If
\(L=\frac12g_t^{-1}\dot g_t\) and \(r_t\) denotes the Ricci endomorphism of
the principal-orbit metric \(g_t\), the radial and tangential Ricci formulas
are
\begin{equation}\label{eq:master-ricci}
 \Ric(\partial_t,\partial_t)=-H'-|L|^2,\qquad
 \Ric(g_t)^\sharp=r_t-L'-HL.
\end{equation}
Hence \(\Ric(g)=6g\) is equivalent, on the regular part, to
\begin{equation}\label{eq:master-einstein}
 H'=-|L|^2-6,\qquad
 L_i'=r_i-6-HL_i.
\end{equation}
For the Wallach metric \eqref{eq:metric}, the orbit-Ricci eigenvalues are
\begin{equation}\label{eq:intro-orbit-ricci}
 r_i=
 \frac1{f_i^2}
 +\frac{f_i^4-(f_j^2-f_k^2)^2}
 {4f_1^2f_2^2f_3^2},
 \qquad \{i,j,k\}=\{1,2,3\}.
\end{equation}
Taking the trace of the tangential equation and eliminating \(H'\) gives the
Hamiltonian constraint
\begin{equation}\label{eq:master-hamiltonian}
 \operatorname{Scal}(g_t)-H^2+|L|^2=30.
\end{equation}
Because each isotropy summand has real dimension two,
\[
 |L|^2=2(L_1^2+L_2^2+L_3^2).
\]
For the Ricci-flat inner problem the same formulas hold with every occurrence
of the Einstein constant \(6\) removed.  All later evolutionm and constraint
identities use this convention.

A smooth Weyl reflection across a principal orbit exchanges the first two metric directions and
fixes the third.  At a reflection orbit \(T\) its conditions are
\begin{equation}\label{eq:reflection}
 f_1(T)=f_2(T),\qquad
 L_1(T)+L_2(T)=0,\qquad
 L_3(T)=0.
\end{equation}
Since \(f_1(T)=f_2(T)>0\), the middle condition is equivalent to
\(f_1'(T)=-f_2'(T)\); likewise \(L_3(T)=0\) is equivalent to
\(f_3'(T)=0\).  Thus \eqref{eq:reflection} is exactly the \(C^1\) matching
condition for the Weyl reflection that exchanges the first two summands and
fixes the third.
By \eqref{eq:master-einstein},
\begin{equation}\label{eq:Hmono}
 H'=-2(L_1^2+L_2^2+L_3^2)-6<0.
\end{equation}
Thus \(H=0\) is a canonical transverse reflection section.  Its location is
not prescribed: for each focal solution the first zero occurs at a
parameter-dependent time \(T=T(b,\mu)\).  A closed solution obtained by
reflection therefore has orbit-space length \(2T(b,\mu)\), which is part of
the solution rather than a normalization imposed in advance.  On this moving
section the middle condition in \eqref{eq:reflection} is automatic.  We are
left with only
\[
 f_1-f_2=0,\qquad L_3=0.
\]

If \(T(b,\mu)\) denotes the first \(H=0\) time for the parametrization used
later, define
\begin{equation}\label{eq:detector-map}
 \mathcal F(b,\mu)
 =\bigl(F_1(b,\mu),F_2(b,\mu)\bigr)
 =\bigl(f_1-f_2,L_3\bigr)\big|_{t=T(b,\mu)}.
\end{equation}
A zero of \(\mathcal F\) is exactly a half-metric that can be reflected
smoothly to a closed metric on \(S^7\).  The rest of the paper is devoted to
showing that \(\mathcal F\) has infinitely many zeros.

\subsection{Phase--response reduction}

The final closing step is a finite-dimensional response-crossing argument,
viewed through the detection and response-geometric framework of
\cite{SiffertDetection,SiffertResponse}.  If one component of a
two-dimensional limiting detector selects an oscillatory phase transversely,
then the corresponding parameter direction is visible to first order.  One
may restrict the second observation to this phase branch, and a sign change of
the resulting reduced response persists under a sufficiently small uniform
perturbation of the detector.

Here the shrinking singular-orbit scale itself supplies the phase.  Write
\[
 \theta=\omega\log(1/b),\qquad
 b_{n,\theta}=\exp\!\left(-\frac{2\pi n+\theta}{\omega}\right),
\]
and normalize the detector by
\[
 \mathcal G_n(\theta,\mu)
 =
 \left(
 b_{n,\theta}^{-5/2}F_1(b_{n,\theta},\mu),
 F_2(b_{n,\theta},\mu)
 \right).
\]
The matching theorem proved below gives, uniformly on fixed compact
\((\theta,\mu)\)-sets,
\begin{equation}\label{eq:phase-response-limit}
 \mathcal G_n(\theta,\mu)
 \longrightarrow
 \left(
 \Re(C_o(\mu)Qe^{i\theta}),
 \mathcal E(\mu)
 \right).
\end{equation}
After shrinking the detuning interval, \(C_o(\mu)Q\) is bounded away from
zero.  The odd response therefore selects a phase branch
\(\theta_0(\mu)\) transversely, for example by
\[
 e^{i\theta_0(\mu)}C_o(\mu)Q=i|C_o(\mu)Q|.
\]
On this branch the reduced response is simply
\[
 \Psi(\mu)=\mathcal E(\mu).
\]
Since
\[
 \mathcal E(-\mu_0)<0<\mathcal E(\mu_0),
\]
the response-crossing principle applies to \(\mathcal G_n\) for every
sufficiently large \(n\).  Equivalently, one obtains the repeated
Poincar\'e--Miranda rectangles used in the closing argument.

This is the concrete phase--response mechanism in the present Wallach
problem.  Its conceptual interpretation comes from
\cite{SiffertDetection,SiffertResponse}, while the Wallach-specific content is
the limiting map~\eqref{eq:phase-response-limit}: threshold selection, Weyl
splitting, nonvanishing of the odd amplitude, and relative odd transport.
Thus response geometry supplies the organizing language, whereas the proof of
the required response crossing is entirely contained in the analysis below.

\section{The Ricci-flat threshold}

\paragraph{Time variables used below.}
To keep the several rescalings distinct, we use the following convention
throughout the threshold and matching arguments:
\[
\begin{array}{c|c}
\text{variable} & \text{meaning}\\ \hline
t & \text{physical unit-speed radial variable}\\
\tau=t/b & \text{inner variable after scaling by the focal size }b\\
s=\log\tau & \text{logarithmic inner variable}\\
\eta & \text{Ricci-flat projective time, }d\eta=H\,dt .
\end{array}
\]
A dot always denotes differentiation in physical time \(t\).  In the
projective calculation below, a prime denotes \(d/d\eta\) until physical time
is explicitly restored; in the later inner analysis \(d/ds\) is written
explicitly whenever confusion could arise.

When the singular orbit in the Einstein problem is very small, magnifying
that region removes the positive Einstein constant to first order.  The
resulting local model is Ricci-flat, and the relevant limiting objects form a
one-parameter Ricci-flat family issuing from the same singular orbit.

There are two qualitatively different kinds of Ricci-flat trajectories.
For one range of the initial anisotropy the metric is complete and enters a
controlled region of phase space.  For sufficiently large anisotropy in the
opposite direction the solution becomes incomplete.  Somewhere between
these two behaviours lies a critical initial value.  We call the
corresponding solution the \emph{threshold trajectory}.

Its long-time geometry is decisive: the threshold approaches the Ricci-flat
cone whose oscillatory mode drives the later closing argument.  Thus this
section proves
\[
 \text{complete/incomplete transition}
 \quad\Longrightarrow\quad
 \text{a distinguished conical limit}.
\]
The oscillation around that limit will be analyzed only afterwards.

A boundary-layer argument produces an open tail of incomplete Ricci-flat
initial data, while Chi's invariant region supplies an open family of complete
initial data.  Their boundary selects the threshold.  Finite-time termination
and escape in the normalized phase variables are then excluded, and a
monotone quantity forces convergence to the distinguished cone.

In the Ricci-flat limiting problem, which has its own homothety freedom,
we fix the noncollapsing singular-orbit scale.  This is only a normalization
of the inner Ricci-flat model and does not impose a length on the
positive-Einstein orbit interval.  Thus only one Ricci-flat parameter remains.  We denote it by
\[
 c:=\beta.
\]
With the sign convention used here, the incomplete tail is
\(c\to-\infty\).

\subsection{Producing an incomplete side}

We first need only one robust fact: sufficiently negative \(c\) cannot give
a complete Ricci-flat metric.  The detailed boundary-layer variables below
are a convenient way to prove this.  They magnify the short initial interval
on which a large anisotropy dominates the evolution; they are not part of
the later Einstein matching construction.

For large anisotropy write \(c=-\varepsilon^{-1}\) and rescale
\[
 t=\varepsilon\tau,\qquad f_1=\varepsilon A,\qquad f_2=B,\qquad f_3=C.
\]
The boundary-layer limit has the first integral $BC\equiv1$.  With
\[
 \delta=\log(C/B),\qquad u=A',\qquad w=A\delta',
\]
one obtains
\begin{equation}\label{eq:bl}
 u'=-\frac{w^2}{2A},\qquad
 \delta'=\frac{w}{A},\qquad
 w'=\frac{\sinh(2\delta)-uw}{A},
\end{equation}
with constraint
\begin{equation}\label{eq:blconstraint}
 u^2-\frac12w^2=1-\sinh^2\delta.
\end{equation}
On the positive branch, $w>0$, hence $\delta'>0$ and $u'<0$.  If $u$ stayed
nonnegative, then once $\delta\ge\operatorname{arsinh}1$,
\[
 \frac{du}{d\delta}=-\frac w2
 \le-\frac1{\sqrt2}\sqrt{\sinh^2\delta-1},
\]
which forces $u$ through zero if $\delta\to\infty$.  If instead $\delta$ remained bounded while $u\ge0$, then $u$ and $w$ would
remain bounded by the constraint and $A'=u$ would give $A(\tau)\le C(1+\tau)$.
But the first and third equations in \eqref{eq:bl} imply the exact identity
\[
 (Aw)'=A'w+Aw'=\sinh(2\delta).
\]
Since $\delta$ is increasing and positive after the focal end, boundedness
would give $\delta\to\delta_\infty>0$ and hence $Aw\ge c\tau$ for large
$\tau$.  Thus $w\ge c_0>0$, and then
$\delta'=w/A\ge c_1/(1+\tau)$, contradicting boundedness of $\delta$.
Hence $u$ becomes negative at finite rescaled time.  Choose a regular rescaled
time at which $u<0$.  Smooth dependence of the rescaled singular germ on
$\varepsilon$ then gives $H<0$ at a regular point for all sufficiently small
$\varepsilon>0$.  But
\[
 H'=-2(L_1^2+L_2^2+L_3^2)\le-\frac{H^2}{6},
\]
so they are incomplete in finite forward time.

\begin{proposition}\label{prop:incomplete}
The Ricci-flat focal family has an incomplete tail for sufficiently large
anisotropy on the chosen side.
\end{proposition}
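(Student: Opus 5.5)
The plan is to carry out the boundary-layer argument sketched just before the statement, in three steps. The singular-germ analysis gives us a singular limit problem, the limit problem gives us a regular point where \(H<0\), and continuity plus the Riccati inequality for \(H\) gives incompleteness. Throughout we assume the Eschenburg--Wang germ depends continuously, indeed analytically, on its leading data. The first step is to verify that the rescaling
\[
 t=\varepsilon\tau,\qquad f_1=\varepsilon A,\qquad f_2=B,\qquad f_3=C,\qquad c=-\varepsilon^{-1},
\]
applied to the Ricci-flat focal germ with \(f_2(0)=f_3(0)=1\) and \(f_2'(0)=-f_3'(0)=c\), converges as \(\varepsilon\to0\) to the boundary-layer system \eqref{eq:bl}. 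That limit should come with the constraint \eqref{eq:blconstraint} and the first integral \(BC\equiv1\). The initial data are \(A'(0)=1\), \(B'(0)=-1\), \(C'(0)=1\) in the \(\tau\)-variable. I would check this on the Ricci formulas \eqref{eq:intro-orbit-ricci}: after rescaling, only the terms of order \(\varepsilon^{-2}\) survive, namely \(1/f_1^2\) and \((f_2^2-f_3^2)^2/(4f_1^2f_2^2f_3^2)\). This yields the \(\sinh(2\delta)\) forcing.

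The second step is the qualitative analysis of \eqref{eq:bl}, where the goal is to show that \(u=A'\) becomes negative in finite \(\tau\). On the branch \(w>0\), which holds right after \(\tau=0\) because \(\delta\) grows from \(0\), we have \(\delta'>0\) and \(u'<0\). Suppose \(u\ge0\) for all time. There are two cases.
\begin{itemize}[leftmargin=*]
\item If \(\delta\to\infty\), then \(\delta\) eventually exceeds \(\operatorname{arsinh}1\). The constraint then gives \(w\ge\sqrt{2(\sinh^2\delta-1)}\), so \(du/d\delta\le-w/2\). Integrating in \(\delta\) drives \(u\) below zero, a contradiction.
\item If \(\delta\) stays bounded, the constraint bounds \(u\) and \(w\), so \(A\le C(1+\tau)\). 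The identity \((Aw)'=\sinh(2\delta)\ge\sinh(2\delta(\tau_0))>0\) gives \(Aw\ge c\tau\), hence \(w\ge c_0\). Then \(\delta'\ge c_1/(1+\tau)\), which makes \(\delta\) unbounded, a contradiction.
\end{itemize}
So there is a finite \(\tau_1>0\) with \(u(\tau_1)<0\). At that point \(A\), \(B\), \(C\) are all positive and \(\delta\) is finite, since the limit solution stays regular on \([0,\tau_1]\).

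The third step transfers this back to the \(\varepsilon>0\) problem. At \(\tau_1\) the limit has \(L_1=u/(\varepsilon A)\) dominating, with \(L_2+L_3=(BC)'/(BC)=0\). Hence \(\varepsilon H\to 2u(\tau_1)/A(\tau_1)<0\). Uniform convergence of the rescaled solutions on \([\tau_1/2,\tau_1]\), away from the singular point, then gives \(H(t_1)<0\) at \(t_1=\varepsilon\tau_1\) for all small \(\varepsilon\). From there, Cauchy--Schwarz gives the Ricci-flat inequality
\[
 H'=-2\sum L_i^2\le -\frac{H^2}{6}.
\]
Comparison with the Riccati equation forces \(H\to-\infty\) in finite forward time, and hence some \(f_i\to0\) at a regular time. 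So the metric is incomplete, which gives the incomplete tail \(c\to-\infty\).

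The main obstacle is the first step: uniform convergence of the rescaled singular germs near \(\tau=0\) as \(\varepsilon\to0\). The Eschenburg--Wang existence is only local, and its interval might shrink with \(\varepsilon\). I would handle this by working directly with the rescaled system, whose leading data \(A'(0)=1\), \(B=C=1\), \(B'=-1\), \(C'=1\) are fixed. The initial value problem then depends on \(\varepsilon\) as a regular parameter, with the \(\varepsilon^2\)-terms being lower order. Applying the contraction-mapping formulation with parameter \(\varepsilon\in[0,\varepsilon_0)\) yields a germ on a fixed \(\tau\)-interval depending continuously on \(\varepsilon\). Beyond that interval, ordinary continuous dependence carries the solution up to \(\tau_1\).
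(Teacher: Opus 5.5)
Your proposal is correct and follows the paper's route: the boundary-layer rescaling $c=-\varepsilon^{-1}$, the dichotomy $\delta\to\infty$ versus $\delta$ bounded (using $du/d\delta=-w/2$ and $(Aw)'=\sinh(2\delta)$) to force $u<0$ at a finite rescaled time, transfer to $H<0$ at a regular time by dependence on $\varepsilon$, and the comparison $H'\le-H^2/6$. Your observation that $\varepsilon$ enters the rescaled system only through $\varepsilon^2$ in terms regular at $\tau=0$ is a useful justification of the ``smooth dependence'' the paper simply asserts; one slip is that the $\sinh(2\delta)$ forcing comes from the $(f_3^4-f_2^4)$ terms in $r_3-r_2$, whereas the two $r_1$ terms you list produce the $1-\sinh^2\delta$ in the $u$-equation and the constraint.
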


\subsection{The finite endpoint and its limit}

We now place the incomplete tail next to a known complete family.  This
creates the threshold parameter $c_*$.  The rest of the section is devoted
to showing that its trajectory does not terminate at a finite projective
boundary point but converges to a single interior equilibrium.

For the $SU(3)/T^2$ Wallach diagram, Chi constructs a continuous
one-parameter family of smooth complete asymptotically conical Ricci-flat
metrics on $\Lambda_-^2\mathbb{CP}^2$, with the Bryant--Salamon metric in the
interior~\cite{BryantSalamon,Chi}.  To keep the logical dependence transparent, we separate
the imported input from the argument proved here.

To state that input, we retain Chi's normalized polynomial variables
\(X_i,Z_i\).  The \(X_i\) are normalized logarithmic-derivative variables and
the \(Z_i>0\) are the corresponding curvature variables; below we need only
Chi's polynomial identities and ratios of the \(Z_i\), not their full
normalization.  Set
\[
 d_1=X_1-X_3,\qquad d_2=X_2-X_3.
\]
We also retain Chi's notation \(S_3\) for his Case-I semialgebraic region.
The smaller invariant set \(\widehat S_3\) is defined explicitly at its first
appearance.

\medskip
\noindent\textit{Input from Chi.}
We use only the following Case-I facts from~\cite{Chi}.  Proposition~3.4
proves that, on the $s_1^{\mathrm{Chi}}\ge0$ side of the focal family, trajectories do not
leave the region $S_3$.  Lemma~3.6 shows that
\[
 \widehat S_3
 =S_3\cap\{Z_1+Z_2-Z_3\ge0\}
 \cap\{Z_1(X_1-X_3)+Z_2(X_2-X_3)\ge0\}
\]
is compact and forward invariant.  More precisely, Lemma~3.19 of~\cite{Chi}
constructs an entrance zone from which sufficiently small positive \(s_1^{\mathrm{Chi}}\)
trajectories are forced to exit through the face \(Z_1+Z_2-Z_3=0\);
Lemma~3.22 then shows that the exit point lies on
\(\partial\widehat S_3\), is noncritical, and hence enters the compact
invariant set.  Thus a nontrivial interval of focal parameters exists for all
forward time.  We do \emph{not} import from~\cite{Chi} the existence,
uniqueness, asymptotics, or cone limit of the far endpoint introduced below.

\medskip
\noindent\textit{New threshold argument.}
The relation with Chi's parameter is explicit.  His smooth focal data are
\[
 (f_1,f_2,f_3,f_1',f_2',f_3')(0)
 =(0,h_0,h_0,1,-h_1,h_1),
\]
whereas our convention is $f_2'(0)=-f_3'(0)=c$.  Hence
$c=-h_1$.  Moreover, Chi's equation~(2.27) gives, for fixed
\(s_0^{\mathrm{Chi}}>0\) and with \(d=2\), the common dimension of the three
isotropy summands,
\[
 \frac{2h_1}{\sqrt d}
 =\frac{\sqrt2\,s_1^{\mathrm{Chi}}}{\sqrt{(d+1)s_0^{\mathrm{Chi}}}},
\]
so \(c\) and \(s_1^{\mathrm{Chi}}\) have opposite signs.  Therefore the incomplete side
$c<0$ is exactly Chi's $s_1^{\mathrm{Chi}}>0$ side (after the harmless interchange of the
second and third summands if the opposite convention is chosen).  In
particular Proposition~3.4 applies on the entire side used below, not merely
after an unspecified reparametrization.  Since the far endpoint below lies
strictly away from $c=0$, all sufficiently nearby parameters on either side
of it remain in this same $s_1^{\mathrm{Chi}}>0$ half-family.  Let \(I_{\mathrm{ent}}\) be the connected component, containing the
Bryant--Salamon value \(c=0\), of the set of parameters on the side \(c\le0\)
whose trajectories either start in \(\widehat S_3\) (at \(c=0\)) or enter
\(\widehat S_3\) through its noncritical entrance face in finite projective
time.  Lemmas~3.19 and~3.22 of~\cite{Chi}, together with the positive
proportionality between \(h_1\) and \(s_1^{\mathrm{Chi}}\), show that
\(I_{\mathrm{ent}}\) contains a nontrivial one-sided neighborhood of
\(c=0\).
Proposition~\ref{prop:incomplete} shows that \(I_{\mathrm{ent}}\) cannot extend
indefinitely along the chosen anisotropic side.  Since it contains a
nontrivial one-sided neighborhood of \(0\), its far endpoint is a finite
number \(c_*<0\).  By definition, parameters of \(I_{\mathrm{ent}}\) approach
\(c_*\) from the entrance side, while parameters immediately beyond \(c_*\)
on the same half-family do not belong to this entrance component.  Local existence of the focal solutions follows from the singular-orbit
initial-value theory of Eschenburg--Wang~\cite{EW}.  Their construction also
exhibits the possible higher-order indeterminacy; throughout we fix the
Weyl-equivariant recursive complement branch described above.  On that fixed
finite-dimensional branch, the singular integral formulation gives the
continuous, and in fact real-analytic, parameter dependence, used below.  In the present
group diagram the tangent and normal slice representations have no common
irreducible summand.

We now identify the limiting trajectory at $c_*$.  There are two possible
obstructions to convergence to an interior equilibrium: the trajectory might
hit the entrance face at finite time, or it might run toward the projective
boundary where the reconstruction factor $Z_3$ diverges.  We exclude these
possibilities in that order.

Pass to Chi's projective time and set
\[
 \omega_1=\frac{Z_1}{Z_3},\qquad
 \omega_2=\frac{Z_2}{Z_3},\qquad
 S=\omega_1+\omega_2.
\]
Until physical time is explicitly restored below, a prime in this threshold
argument denotes differentiation in projective time.  The state coordinates
used in the contact calculation are
\((\omega_1,\omega_2,d_1,d_2)\).
The entrance face is $S=1$.  The exact projective equations give
\begin{equation}\label{eq:Scontact}
 S=1,\quad S'=0
 \quad\Longrightarrow\quad
 S''=4(\omega_1d_1^2+\omega_2d_2^2)\ge0.
\end{equation}
If equality holds, then $d_1=d_2=0$.  At such a point write
$\omega_2=1-\omega_1$.  Differentiating the exact projective equations gives
\[
 S'''=0,
 \qquad
 S''''=8Z_3^4\,\omega_1(1-\omega_1)(2\omega_1-1)^2.
\]
For an interior contact $0<\omega_1<1$, this is strictly positive unless
$\omega_1=1/2$.  Hence the only interior degenerate contact is
\begin{equation}\label{eq:p2projective}
 p_2=\left(\frac12,\frac12,0,0\right).
\end{equation}

Figure~\ref{fig:threshold-schematic} summarizes the threshold geometry in the
\((\omega_1,\omega_2)\)-projection.  It is schematic rather than a numerical
phase portrait.  The two neighboring families are drawn on opposite sides of
the threshold only to visualize how their first-contact points on \(S=1\)
approach the distinguished point \(p_2\).  The figure is not used as input
to Proposition~\ref{prop:threshold} or to any of the estimates below.

\begin{figure}[ht]
\centering
\includegraphics[width=.76\textwidth]{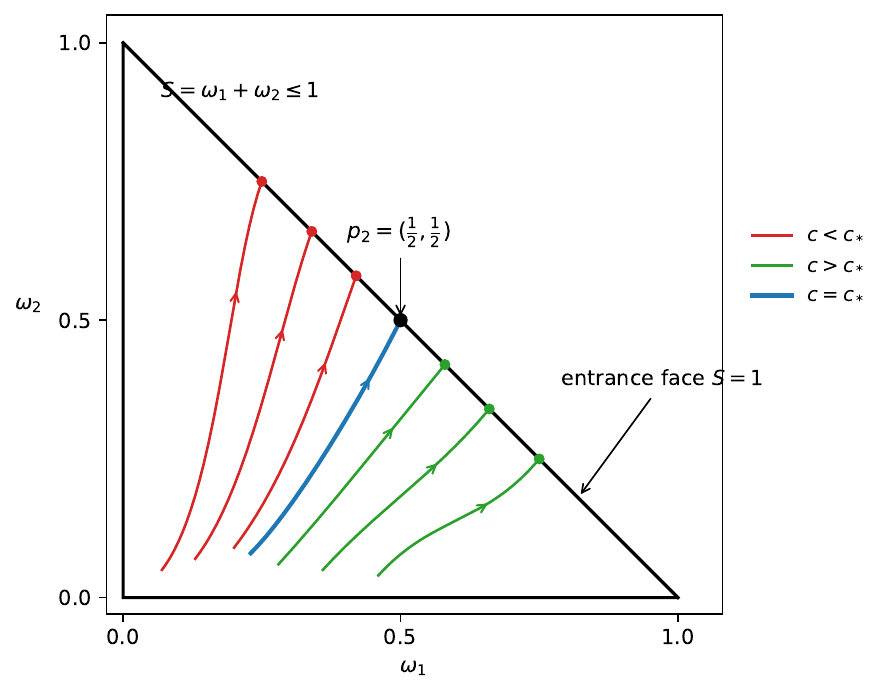}
\caption{Schematic threshold selection in projective variables.  The triangle
is the region \(S=\omega_1+\omega_2\le1\), and its sloping edge is the
entrance face \(S=1\).  The red and green curves schematically represent
parameters on the two sides of the threshold, with first-contact points on
\(S=1\) tending toward \(p_2=(1/2,1/2)\).  The blue curve represents the
threshold trajectory \(c=c_*\), which remains in \(S<1\) for finite
projective time and converges to \(p_2\).  The curves are schematic and are
not numerical trajectories.}
\label{fig:threshold-schematic}
\end{figure}

Let $c_j\to c_*$ through $I_{\mathrm{ent}}$ and let $T_j$ be the first
$S=1$ time.  We claim
\begin{equation}\label{eq:Ttoinfty}
 T_j\longrightarrow\infty.
\end{equation}
Suppose instead that $T_j\to T_*<\infty$ along a subsequence.  Smooth
dependence gives
\[
 S_{c_*}(T_*)=1,\qquad S_{c_*}(t)<1\quad(t<T_*).
\]
All $Z_i$ are positive at every positive regular time on the focal branch,
so this first contact is interior: $0<\omega_i<1$.  A nontransverse first
contact is impossible.  Indeed, since \(S<1\) immediately before the first
contact, a tangential contact cannot have \(S''>0\): that would make the
contact a local minimum.  Thus \eqref{eq:Scontact} forces \(S''=0\), hence
\(d_1=d_2=0\).  The same one-sided argument applied to the first possible
nonzero even derivative rules out \(S''''>0\).  The fourth-order calculation
therefore forces any such interior contact to be \(p_2\), while uniqueness for
the autonomous projective system prevents a nonconstant trajectory from
reaching the equilibrium \(p_2\) in finite time.  Hence
\[
 S'_{c_*}(T_*)>0.
\]

Now put
\[
 W=Z_1+Z_2-Z_3,\qquad
 \Xi=Z_1(X_1-X_3)+Z_2(X_2-X_3).
\]
At this point we use one further identity from Chi's polynomial
system.  Directly differentiating \(W=Z_1+Z_2-Z_3\) with the displayed
\(Z_i\)-equations gives, in polynomial time,
\[
 W'=2\Xi\qquad\text{on }W=0.
\]
Since $S-1=W/Z_3$ with $Z_3>0$, $S'$ and $\Xi$ have the same sign there.  Thus the limiting first contact satisfies $\Xi>0$.
By Proposition~3.4 of~\cite{Chi} the trajectory on the chosen side remains
in $S_3$ up to the contact, so
\[
 \gamma_{c_*}(T_*)\in
 S_3\cap\{W=0\}\cap\{\Xi>0\}\subset\widehat S_3.
\]
Because $W'>0$, this is a noncritical entrance point.  Lemma~3.6 of~\cite{Chi} then traps the trajectory in the compact set
$\widehat S_3$ for all later time.

This entrance persists for nearby focal parameters.  The implicit-function
theorem gives nearby first-contact times $T(c)$, and the strict inequality
$\Xi(\gamma_c(T(c)))>0$ remains valid.  Proposition~3.4 of~\cite{Chi} keeps all such trajectories in $S_3$ up to
their contact.  Hence parameters on both sides of
$c_*$ sufficiently close to it also enter $\widehat S_3$, contradicting the
maximality of $I_{\mathrm{ent}}$.  This proves \eqref{eq:Ttoinfty}.

\medskip
\noindent\textit{From here to Proposition~\ref{prop:threshold}, the argument is
new.}  Chi's invariant region is used only as a region in which the displayed
sign information is valid; the endpoint continuation, large-\(Z_3\) ejection,
precompactness, Lyapunov argument, and convergence to \(p_2\) are proved
below.

We next prove that the threshold is global and converges to the nonnormal cone.
The point requiring care is that the Hamiltonian constraint does \emph{not}
make the region \(K\ge0\) forward invariant.  We therefore use physical-time
continuation first and then a direct large-\(Z_3\) ejection estimate.

Let
\[
 H=\operatorname{tr}L=2(L_1+L_2+L_3).
\]
For the Ricci-flat equations,
\begin{equation}\label{eq:raychaudhuri-rf}
 \dot H=-2\sum_{i=1}^3L_i^2.
\end{equation}
If a regular solution has \(H>0\) on a finite physical interval
\([t_0,T)\), then \eqref{eq:raychaudhuri-rf} gives
\(\int_{t_0}^T\sum L_i^2<\infty\).  Hence
\(\int|L_i|<\infty\), the \(f_i\) stay bounded above and away from zero, the
orbit Ricci terms stay bounded, and
\[
 \dot L_i=-HL_i+r_i(f)
\]
keeps \(L_i\) bounded.  The solution therefore extends across \(T\).

For the threshold trajectory \(H\) cannot vanish at a finite regular time.
Indeed, if some \(L_i\neq0\), then \(\dot H<0\) there.  If all \(L_i=0\),
then the Ricci-flat equations give
\[
 \dot L_i=r_i,\qquad
 \dot H=\ddot H=0,\qquad
 H^{(3)}=-4\sum_{i=1}^3 r_i^2<0.
\]
The final inequality holds because otherwise the homogeneous principal orbit
would be Ricci-flat; a compact homogeneous Ricci-flat manifold is flat by
Alekseevskii--Kimel'fel'd~\cite{AK}, which \(SU(3)/T^2\) is not.  Thus a finite zero of \(H\) would be followed by a
regular time \(t_1\) at which \(H(t_1)=-a<0\).
Cauchy--Schwarz then gives
\[
 \dot H\le-\frac{H^2}{6},
\]
so comparison with \(\dot h=-h^2/6\) forces blow-down no later than
\(t_1+6/a\).  Choose a regular time \(t_2\in(t_1,t_1+6/a)\) sufficiently
close to the comparison blow-down time that \(H(t_2)<-M\), with \(M\) fixed
large.  Smooth dependence on the focal parameter gives the same strict
negative bound at \(t_2\) for all sufficiently close entrance-side
parameters.  The same comparison then forces those nearby trajectories to
become singular within physical time \(6/M\) after \(t_2\).  This contradicts
their forward entrance into Chi's compact invariant set (and hence their
completeness).  Therefore
\begin{equation}\label{eq:H-positive-threshold}
 H(t)>0\qquad\text{at every finite threshold time.}
\end{equation}

We now denote the projective time by \(\eta\); it satisfies
\(d\eta=H\,dt\).  On Chi's region \(S_3\), the normalized third orbit-Ricci component
\[
 \mathcal R_3:=\frac{r_3}{H^2}
\]
satisfies \(\mathcal R_3\ge0\).  Since on the \(s_1^{\mathrm{Chi}}>0\) branch \(L_3>0\) initially,
\[
 \dot L_3=-HL_3+r_3,\qquad r_3=H^2\mathcal R_3\ge0.
\]
If \(\int_0^\infty H\,dt<\infty\), the integrating factor would give
\(L_3\ge c>0\), while \eqref{eq:raychaudhuri-rf} would force \(H\) to become
negative in finite time.  Therefore
\begin{equation}\label{eq:eta-infinite}
 \int_0^\infty H\,dt=\infty,\qquad \eta\longrightarrow\infty.
\end{equation}

We now exclude escape in the normalized variables.  Put
\[
 R=Z_3,\qquad s=d_1+d_2,\qquad
 q=d_1^2-d_1d_2+d_2^2.
\]
The quadratic shear form used in the normalized equations is
\(\mathcal Q(v):=8q\).  In the Hamiltonian decomposition write
\[
 \mathcal G:=\frac16+\frac43q,\qquad
 K:=\frac56-\frac43q.
\]
The exact equations give
\begin{equation}\label{eq:R-log-derivative}
 \frac{R'}R=\frac23(2q-s)\ge-\frac13.
\end{equation}
Define the projective curvature polynomial
\[
 \mathcal D(\omega_1,\omega_2)
 =-S^2+8\omega_1\omega_2+6S-1.
\]
The Ricci-flat Hamiltonian constraint in Chi's normalized variables is
\begin{equation}\label{eq:D-constraint-new}
 \mathcal D(\omega_1,\omega_2)=\frac{5-8q}{3R^2}.
\end{equation}
Equivalently, with \(K=\frac56-\frac43q\),
\[
 \mathcal D=\frac{2K}{R^2}.
\]
This is a separate projective normalization of the Ricci-flat system and is
used below only through these displayed identities.
Moreover \(X_3\ge0\) on \(S_3\), hence
\begin{equation}\label{eq:s-half}
 s\le\frac12.
\end{equation}
In Chi's normalized polynomial variables, the corresponding exact
curvature-difference polynomials are
\[
 N_1=(\omega_1-1)(\omega_1-3\omega_2+1),\qquad
 N_2=(1-\omega_2)(3\omega_1-\omega_2-1).
\]
These are the projectivized form of the same Wallach curvature differences;
their normalization is Chi's \(Z_i\)-normalization fixed above, whereas
\eqref{eq:intro-orbit-ricci} is written directly in the metric coefficients.
No identification of the individual bnormalizing factors is used below.  The
polynomials satisfy
\[
 N_1+N_2=9S-3-\mathcal D.
\]
Substitution in those difference equations yields
\begin{equation}\label{eq:s-exact}
 s'=\frac{R^2}{2}\bigl(9S-3-\mathcal D(1+s)\bigr).
\end{equation}

\begin{lemma}[Large-\(R\) ejection]\label{lem:finite-exit}
Along every threshold segment with \(R\ge2\),
\begin{equation}\label{eq:s-drift}
 s'\le-\frac14R^2.
\end{equation}
Consequently the threshold satisfies
\begin{equation}\label{eq:Z3bound}
 Z_3=R<4.
\end{equation}
\end{lemma}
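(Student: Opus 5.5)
The proof has two parts: a pointwise drift estimate from the exact identity \eqref{eq:s-exact}, and a barrier argument showing that \(R\) cannot climb to \(4\).

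\emph{The drift estimate.} We bound the bracket \(9S-3-\mathcal D(1+s)\) from above on \(R\ge2\). Two ingredients control it.

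First, the constraint \eqref{eq:D-constraint-new} gives \(\mathcal D=(5-8q)/(3R^2)\). Hence for \(R\ge2\) we have \(\mathcal D\le 5/12\), and \(\mathcal D\) is small and possibly negative.

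Second, the region \(\mathcal D\ge -\frac{8q}{3R^2}\) confines \((\omega_1,\omega_2)\). Near \(S\le1\), the polynomial \(\mathcal D=-S^2+8\omega_1\omega_2+6S-1\) is only small when \(S\) is close to its lower root region, \(S\approx 3-2\sqrt2\approx0.17\) when \(\omega_1\omega_2\) is small. I would write
\[
9S-3-\mathcal D(1+s)=9S-3-\mathcal D-s\mathcal D
\]
and eliminate \(S\) using \(\mathcal D\) as a function of \(S\) and \(\omega_1\omega_2\le S^2/4\). The bound \(\mathcal D\le -S^2+2S^2+6S-1=S^2+6S-1\) gives \(S\le -3+\sqrt{10+\mathcal D}\). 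With \(\mathcal D\le 5/12\) this forces \(S\le -3+\sqrt{10.42}\approx0.228\), so \(9S-3\le -0.95\).

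The term \(-\mathcal D(1+s)\) remains. When \(\mathcal D\ge0\), use \(1+s\ge0\); this needs a lower bound on \(s\), obtainable from \(q\ge s^2/4\) together with \(\mathcal D\ge -8q/(3R^2)\), or one simply drops the term if it has a favorable sign. When \(\mathcal D<0\), we have \(-\mathcal D(1+s)\le |\mathcal D|\cdot\frac32\) by \eqref{eq:s-half}, and this is bounded by \(\frac{8q-5}{3R^2}\cdot\frac32\). Large \(q\) is the delicate case. Here I would use that \(\mathcal D\ge -S^2+6S-1\) can only be very negative when \(S\) is near \(0\), so \(9S-3\) becomes even more negative. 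Balancing these gives \(9S-3-\mathcal D(1+s)\le -\frac12\), and hence \eqref{eq:s-drift}.

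I expect this case analysis to be the main obstacle: controlling \(-s\mathcal D\) uniformly when \(q\) is large. If a direct bound fails, I would use \eqref{eq:D-constraint-new} to write \(q\) in terms of \(\mathcal D R^2\) and reduce to a one-variable polynomial inequality in \(S\) on the admissible range.

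\emph{The barrier argument.} Integrating \eqref{eq:R-log-derivative} gives \((\log R)'=\frac23(2q-s)\). Suppose the threshold reaches \(R=4\). Take the last time \(\eta_0\) before that at which \(R=2\). On \([\eta_0,\eta_1]\), where \(R\) first equals \(4\), we have \(R\ge2\), so \(s'\le -R^2/4\le -1\) and \(s\) decreases rapidly. Then \(-s\) becomes large, and \(q\ge s^2/4\) becomes large. This makes \(\mathcal D=(5-8q)/(3R^2)\) very negative, whereas \(\mathcal D\) is bounded below on the compact projective triangle (by \(-2\), say). The result is a contradiction once \(q>(5+6R^2)/8\).

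To make this quantitative, note that along the climb \(R\) grows at least exponentially at rate \(\frac23(2q-s)\). Meanwhile \(s\) drops by at least \(\int R^2/4\). The growth of \(R\) from \(2\) to \(4\) therefore forces \(q\) beyond this admissible bound before \(R\) can reach \(4\). Equivalently, compare \(d(-s)/d\log R\ge \frac{3R^2}{8(2q-s)}\) with the constraint bound \(q\le (5+6R^2)/8\).

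This excludes \(R\to4\). Combined with the continuation established earlier, it yields \eqref{eq:Z3bound} for all projective times. At \(\eta=0\) the focal data give small \(R\), so the first crossing of \(R=2\) is well defined.
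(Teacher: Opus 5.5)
Your drift estimate can be repaired, but your barrier argument has a genuine gap.

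\textbf{The barrier argument.} You look for a contradiction on the climb interval \([\eta_0,\eta_1]\), on which \(R\) rises from \(2\) to \(4\). Nothing in your argument forces this climb to be slow.

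On the projective triangle \(\mathcal D\ge-S^2+6S-1\ge-1\), so \eqref{eq:D-constraint-new} allows \(q\) up to \((5+3R^2)/8\). Moreover \(q\) can be large while \(s\) stays small (\(d_1\approx-d_2\)). Then \((\log R)'=\frac23(2q-s)\) can be several units, the climb can take projective time well below \(1\), and the guaranteed decrease \(\int_{\eta_0}^{\eta_1}\frac14R^2\,d\eta\) of \(s\) is only a fraction of a unit. Integrating your inequality \(d(-s)/d\log R\ge3R^2/(8(2q-s))\) over \(\log R\in[\log2,\log4]\) with this admissible bound on \(q\) yields a decrease of order \(10^{-1}\). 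Contradicting the constraint through \(q\ge s^2/4\) while \(R\le4\) would need \(|s|\gtrsim3\), so no contradiction follows.

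The missing idea is to look at what happens \emph{after} \(R\) reaches \(4\), using the one-sided bound \(R'/R=\frac23(2q-s)\ge-\frac13\), which follows from \(q\ge0\) and \eqref{eq:s-half}.
\begin{itemize}
\item This gives \(R\ge4e^{-(\eta-\eta_0)/3}\ge2\) on a window of projective length \(3\log2\).
\item On that window the drift decreases \(s\) by at least \(\int_0^{3\log2}4e^{-2u/3}\,du=\frac92\), so \(s\) reaches \(-2\).
\item For \(s\le-2\) one has \(q\ge1\), hence \(K<0\), \(\mathcal D<0\), \(S<3-2\sqrt2\) and \(N_1+N_2<0\).
\item Then \eqref{eq:s-exact} gives \(s'\le-Ks=(\frac43q-\frac56)s\le\frac18s^3\), so \(s\) blows up at finite projective time.
\end{itemize}
The contradiction is with the global existence of the threshold in projective time, i.e.\ with \eqref{eq:H-positive-threshold}, \eqref{eq:eta-infinite} and the physical-time continuation. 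It is not with the Hamiltonian constraint, which by itself gives nothing once \(R\) is free to grow together with \(q\).

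\textbf{The drift estimate.} The inequality \(\omega_1\omega_2\le S^2/4\) gives \(\mathcal D\le S^2+6S-1\). That is a \emph{lower} bound \(S\ge-3+\sqrt{10+\mathcal D}\), not an upper bound. Upper bounds on \(S\) come from \(\omega_1\omega_2\ge0\), i.e.\ \(\mathcal D\ge-S^2+6S-1\). This gives \(S<3-2\sqrt2\) if \(\mathcal D<0\), and \(S<\frac14\) if \(0\le\mathcal D\le\frac5{12}\).

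If \(\mathcal D<0\), then for \(s\ge-1\) one has \(-\mathcal D(1+s)\le\frac32(S^2-6S+1)\); for \(s<-1\) the term is favorable. The linear terms cancel, and the bracket is at most \(\frac32(S^2-1)<-\frac12\). So large \(q\) is no obstacle at all.

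If \(\mathcal D\ge0\), you cannot assume \(1+s\ge0\), and the inequality \(\mathcal D\ge-8q/(3R^2)\) you propose is vacuous. The correct input is \(K\ge0\Rightarrow q\le\frac58\Rightarrow|s|\le\sqrt{5/2}\). The unfavorable term, at most \(\frac5{12}(\sqrt{5/2}-1)\approx0.24\), is then absorbed by \(9S-3<-\frac34\). The margin is tight (the bracket is only below roughly \(-0.51\)), so the ``balancing'' has to be carried out explicitly.
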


\begin{proof}
If \(\mathcal D<0\), then
\[
 \mathcal D=-S^2+8\omega_1\omega_2+6S-1
 \ge-S^2+6S-1.
\]
Thus \(S<3-2\sqrt2\).  If \(s<-1\), then
\(-\mathcal D(1+s)\le0\), and the bracket in
\eqref{eq:s-exact} is at most
\(9(3-2\sqrt2)-3<-1/2\).  If \(-1\le s\le1/2\), then
\[
 -\mathcal D(1+s)
 \le\frac32(S^2-6S+1),
\]
and the same bracket is again \(<-1/2\).

If \(\mathcal D\ge0\), then \(K\ge0\), so \(q\le5/8\) and
\(|s|\le2\sqrt q\le\sqrt{5/2}\).  For \(R\ge2\),
\[
 0\le\mathcal D=\frac{2K}{R^2}\le\frac5{12}.
\]
Since \(\mathcal D\ge-S^2+6S-1\) and
\(-1/16+3/2-1=7/16>5/12\), we have \(S<1/4\).
If \(s\ge-1\), the bracket in \eqref{eq:s-exact} is \(<-3/4\).
If \(s<-1\), then
\[
 -\mathcal D(1+s)
 \le\frac5{12}\left(\sqrt{\frac52}-1\right),
\]
so the bracket is still \(<-1/2\).  This proves
\eqref{eq:s-drift}.

Suppose now that \(R(\eta_0)=4\).  By
\eqref{eq:R-log-derivative}, before \(R\) could fall to \(2\),
\[
 R(\eta)\ge4e^{-(\eta-\eta_0)/3}.
\]
Combining this with \eqref{eq:s-drift}, the decrease of \(s\) before a
possible drop \(4\to2\) is at least
\[
 \frac14\cdot\frac32(4^2-2^2)=\frac92.
\]
Since \(s(\eta_0)\le1/2\), the trajectory reaches \(s=-2\) while \(R\ge2\).
There \(q\ge s^2/4\ge1\), hence \(K<0\), \(\mathcal D<0\), and
\(N_1+N_2<0\).  Equation \eqref{eq:s-exact} then gives
\[
 s'\le\left(\frac43q-\frac56\right)s
 \le\frac18s^3.
\]
Thus \(s\to-\infty\) at a finite projective time \(\eta_*<\infty\).
But \(s=d_1+d_2\) is a smooth normalized function of the regular Ricci-flat
state as long as \(H>0\).  The physical-time continuation proved above and
\eqref{eq:H-positive-threshold} therefore exclude such a finite-\(\eta\)
blow-up.  Equivalently, the global threshold has projective time
\(\eta\to\infty\) by \eqref{eq:eta-infinite}.  Hence \(R<4\).
\end{proof}

Since \(Z_1,Z_2\le Z_3<4\) and \(\omega\) stays in the compact triangle
\(S\le1\), the polynomial \(\mathcal D\) is bounded.  Therefore
\(K=\frac12R^2\mathcal D\) is bounded, and
\[
 q=\frac34\left(\frac56-K\right)
\]
is bounded.  Thus \(d_i,X_i,Z_i\) remain bounded, so the normalized threshold
is precompact for all \(\eta\ge0\).

Finally define
\begin{equation}\label{eq:Lyapunov}
 \mathscr L=\frac12R^2(\omega_1\omega_2)^{2/3}.
\end{equation}
Direct differentiation gives
\begin{equation}\label{eq:Lyapunov-derivative}
 (\log\mathscr L)'=\frac13\mathcal Q(v)\ge0.
\end{equation}
Precompactness bounds \(\mathscr L\) above, while positivity at one regular
time bounds it away from zero.  Thus for some \(c_0>0\) and all sufficiently
large \(\eta\),
\[
 \mathscr L(\eta)\ge c_0.
\]
Since \(R<4\) and \(0<\omega_i\le1\), this lower bound shows that, along the
tail, both \(R\) and \(\omega_1\omega_2\) are bounded away from zero.  In
particular every omega-limit point lies in the interior normalized chart.
Moreover
\[
 \int_0^\infty\mathcal Q(v)\,d\eta<\infty.
\]
The normalized orbit is precompact and the vector field is smooth there, so
\(\mathcal Q(v(\eta))\) is uniformly continuous.  Therefore
\(\mathcal Q(v(\eta))\to0\).  Since
\[
 d_1^2-d_1d_2+d_2^2
 =\frac12(d_1^2+d_2^2)+\frac12(d_1-d_2)^2,
\]
we obtain
\[
 d_1(\eta),d_2(\eta)\longrightarrow0.
\]
Let \(\Omega\) be the omega-limit set.  Precompactness makes \(\Omega\)
nonempty, compact, and invariant, and the preceding limit gives
\(\Omega\subset\{d_1=d_2=0\}\).  Hence the complete orbit through every point
of \(\Omega\) remains in this set, so the \(d_1\)- and \(d_2\)-components of
the normalized vector field vanish on \(\Omega\).  Because the normalization
factors in the exact difference equations are nonzero on the interior chart,
it follows that
\[
 N_1=N_2=0
\]
at every point of \(\Omega\).  Their common zeros are
\[
 \left(\frac12,\frac12\right),\quad(1,1),\quad(1,2),\quad(2,1).
\]
The inequality \(S=\omega_1+\omega_2\le1\) leaves only
\[
 (\omega_1,\omega_2)=\left(\frac12,\frac12\right).
\]
At this point \(q=0\) and \(\mathcal D=6\), so the Hamiltonian constraint
\eqref{eq:D-constraint-new} gives
\[
 R^2=\frac5{18}.
\]
Thus \(\Omega=\{p_2\}\).  A precompact trajectory with singleton omega-limit
set converges to that point, and therefore the threshold dconverges to \(p_2\).

\begin{proposition}[Threshold selection]\label{prop:threshold}
The endpoint Ricci-flat trajectory is global and converges to $p_2$.
\end{proposition}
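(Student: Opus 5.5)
The proposition collects what the preceding subsection has established, so the plan is to assemble those steps in order rather than prove anything new. The overall line is: define the threshold, show it is global, show its normalized orbit is precompact, and then identify its omega-limit set as the single point $p_2$.

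\emph{Step 1 (the threshold value).} Define $c_*$ as the far endpoint of the entrance component $I_{\mathrm{ent}}$. It is finite and negative for two reasons.
\begin{itemize}[label={}]
\item Chi's Lemmas~3.19 and~3.22 show that $I_{\mathrm{ent}}$ contains a one-sided neighbourhood of $c=0$.
\item Proposition~\ref{prop:incomplete} shows that $I_{\mathrm{ent}}$ cannot contain the incomplete tail.
\end{itemize}

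\emph{Step 2 (the trajectory at $c_*$ is global).}
\begin{itemize}[label={}]
\item The first-contact times $T_j$ of approximating entrance parameters tend to infinity, by \eqref{eq:Ttoinfty}. The proof is by contradiction. A finite limiting contact would be transverse, because the contact analysis \eqref{eq:Scontact} only allows the degenerate point $p_2$, and $p_2$ cannot be reached in finite time. Together with $W'=2\Xi>0$, a transverse contact would be a noncritical entrance point, and this would persist to parameters beyond $c_*$, contradicting maximality.
\item Physical-time continuation holds while $H>0$. Positivity of $H$, \eqref{eq:H-positive-threshold}, follows from the third-derivative computation using Alekseevskii--Kimel'fel'd, combined with comparison blow-down transferred to nearby complete parameters.
\item Together these give a global solution with $\eta\to\infty$, by \eqref{eq:eta-infinite}.
\end{itemize}

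\emph{Step 3 (precompactness).}
\begin{itemize}[label={}]
\item Lemma~\ref{lem:finite-exit} gives $R<4$.
\item The constraint \eqref{eq:D-constraint-new} then bounds $K$ and hence $q$, so the normalized trajectory is precompact.
\end{itemize}

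\emph{Step 4 (convergence to $p_2$).}
\begin{itemize}[label={}]
\item The Lyapunov function \eqref{eq:Lyapunov} is monotone by \eqref{eq:Lyapunov-derivative}. Since it is bounded above and away from zero, $\int\mathcal Q\,d\eta<\infty$.
\item Uniform continuity then forces $d_1,d_2\to0$.
\item Invariance of the omega-limit set $\Omega$ forces $N_1=N_2=0$ on $\Omega$. Combined with $S\le1$, this leaves only $(\tfrac12,\tfrac12)$.
\item The constraint then fixes $R^2=5/18$, so $\Omega=\{p_2\}$, and a precompact orbit with a singleton limit set converges to that point.
\end{itemize}

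\emph{Main obstacle.} I expect Step 2, excluding finite termination at the threshold, to be the delicate point. It has to merge two pieces: the degenerate-contact analysis at $S=1$, and the fact that $H>0$ at every finite time, which is proved only indirectly through the completeness of neighbouring parameters. The comparison argument must be made uniform in the parameter, so I would state it explicitly with the continuity of the Eschenburg--Wang germ on the fixed Weyl-equivariant branch.
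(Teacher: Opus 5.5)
Your proposal is correct and is essentially the paper's own argument, in the same order. You take \(c_*\) as the far endpoint of \(I_{\mathrm{ent}}\), rule out a finite first contact with \(S=1\) via the fourth-order contact computation and \(W'=2\Xi\), obtain global existence from \(H>0\) and \(\eta\to\infty\), get precompactness from the large-\(R\) ejection bound \(R<4\), and use the monotone quantity \(\mathscr L\) to force \(d_1,d_2\to0\) and \(\Omega=\{p_2\}\). The one compressed step worth stating explicitly is that \(\Omega\) lies in the interior chart (from \(\mathscr L\ge c_0\), or directly from the constraint \(R^2\mathcal D=5/3\) at \(q=0\)), since that is what converts \(d_1'=d_2'=0\) on \(\Omega\) into \(N_1=N_2=0\).
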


\section{From the threshold cone to observable responses}

Section~3 identifies the inner limiting object: the Ricci-flat threshold
approaches one specific cone.  The next step in the response architecture is
to determine which perturbations of that cone are visible to the two
components of the reflection detector.

Two issues must be separated: the evolution of a small perturbation away
from the cone and its visibility in the reflection conditions.  At the
linearized sine-cone level this means nonvanishing at the model event
\(T_0\); for nearby solutions the detector is evaluated at their moving
first-\(H=0\) events.

Symmetry separates the answer into two simple pieces.  One perturbation
changes the first two metric factors in opposite directions.  It is the
oscillating mode, and the first closing error \(F_1=f_1-f_2\) sees it.  The
other perturbation preserves \(f_1=f_2\) and changes the third factor relative
to them.  It does not oscillate, and the second closing error \(F_2=L_3\)
sees it.  Thus the cone provides exactly the two responses needed later:
one response can be tuned across zero, while the other repeatedly changes
sign as the small scale varies.

The purpose of this section is to prove these two statements at the
linearized level.  The next section then verifies one essential
nondegeneracy: the threshold itself really has a nonzero component in the
oscillating direction.

From the projective reconstruction,
\[
 p_2:\qquad f_1=f_2,\qquad f_3=\sqrt2\,f_1.
\]
The corresponding Ricci-flat cone is
\begin{equation}\label{eq:cone}
 f_1=f_2=\frac r{\sqrt5},\qquad
 f_3=\sqrt{\frac25}\,r.
\end{equation}
Its link $h_2$ satisfies $\Ric(h_2)=5h_2$, so the sine cone
\[
 dt^2+\sin^2t\,h_2
\]
has Einstein constant \(6\).  On this model \(H=6\cot t\).  For this
limiting sine-cone model only, its first \(H=0\) section occurs at
\(t=\pi/2\), where
\(f_1=f_2\) and \(L_3=0\).  This value is a normalization of the limiting
model, not a prescribed reflection time for the nearby Einstein metrics.

\subsection{The oscillating response}

The first closing condition asks whether the first two scale factors agree.
It is therefore natural to measure their relative separation by
\[
 z=\log\frac{f_1}{f_2}.
\]
Linearizing at the sine cone gives the model equation
\begin{equation}\label{eq:odd-sine}
 z''+6\cot t\,z'+10\csc^2t\,z=0.
\end{equation}
Here and below the coefficients \(\cot t,\csc^2t\) are exact only on the
sine-cone background; nearby outer solutions are treated as variable-coefficient
perturbations of this model.
The indicial roots at the singular end are
\[
 \alpha_\pm=-\frac52\pm i\omega,
 \qquad \omega=\frac{\sqrt{15}}2.
\]
The real part \(-5/2\) describes the size of the mode.  The imaginary part
\(\omega\) is more important for the multiplicity argument: it means that
the direction of the perturbation rotates by angle
\(\omega\log r\).  Consequently, when the inner scale is changed
multiplicatively, the sign seen by a fixed outer detector alternates again
and again.

We must still check that the detector does not lose this signal on the way
to the reflection section.  For the sine-cone model, if \(\Psi_o\) is the complex Frobenius solution
with leading term \(t^{-5/2+i\omega}\), its Wronskian with its conjugate is
\(C\sin^{-6}t\) with \(C\neq0\).  Hence
\[
 \Psi_o(T_0)\neq0,\qquad T_0:=\pi/2,
\]
Thus evaluation at the sine-cone section is a nonzero linear functional on
the odd response space.  Equivalently, the complex transfer coefficient from
the odd Frobenius amplitude to the first closing error is nonzero.  A
particular real phase may of course give zero $F_1$-response; as the inner
scale varies, precisely these phase crossings produce the repeated sign
changes used below.  The imaginary part of the indicial roots is the
logarithmic angular frequency of this rotation.

\subsection{The nonoscillating response}

The second closing condition needs a different parameter.  The even
direction preserves \(f_1=f_2\) and instead changes the relative size of the
third summand.  Unlike the odd direction, its relevant regular mode has a
real exponent, so there is no phase rotation.  On this symmetric subsystem put
\[
 y=\log\frac{f_3}{\sqrt2f_1}.
\]
The sine-cone linearized model equation is
\[
 y''+6\cot t\,y'-5\csc^2t\,y=0,
\]
with regular exponent
\[
 \beta_+=\frac{-5+3\sqrt5}{2}>0.
\]
Writing the equation as
\[
 (\sin^6t\,y')'=5\sin^4t\,y
\]
shows that the regular solution normalized by a positive leading coefficient
stays positive: before any hypothetical first zero the right-hand side is
positive, so $\sin^6t\,y'$ and hence $y'$ remain positive.  Therefore
\(y'(T_0)>0\), where \(T_0=\pi/2\) is the sine-cone reflection time.  At
\(H=0\) in
the symmetric subsystem,
\[
 L_3=\frac23y',
\]
so the even mode is detected nondegenerately by $F_2$.

\begin{proposition}[Triangular detector response]\label{prop:triangular-response}
At the $p_2$ sine cone, the differential of the reflection detector respects
the Weyl splitting.  The odd response contributes at first order only to
$F_1$, and its complex transfer coefficient is nonzero; equivalently, its
real $F_1$-response is a nontrivial sinusoid of the odd phase.  The regular
even mode contributes at first order only to $F_2$, and its transfer
coefficient is positive.  Thus, in parity-adapted coordinates, the response
has nonzero odd transfer and a positive even diagonal entry.
\end{proposition}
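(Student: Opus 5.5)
The proof linearizes the Einstein system \eqref{eq:master-einstein}, together with the first-$H=0$ detector \eqref{eq:detector-map}, at the sine cone $dt^2+\sin^2t\,h_2$. We split the linearization using the $\mathbb Z_2$ Weyl symmetry that exchanges $f_1$ and $f_2$.

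\emph{Step 1: the splitting.} The background has $f_1=f_2$, so the exchange $\sigma:(f_1,f_2,f_3)\mapsto(f_2,f_1,f_3)$ fixes it and commutes with the linearized flow. The linearization therefore preserves the $\sigma$-odd subspace, spanned by $z=\log(f_1/f_2)$ and $z'$, and the $\sigma$-even subspace, spanned by $y$, $y'$ and the overall trace part.

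The event time $T$ is defined by $H=0$, and $H$ is $\sigma$-invariant. Hence $\delta T$ depends only on the even part of the variation. Differentiate the detector:
\[
\delta F_1=\delta(f_1-f_2)(T_0)+(f_1-f_2)'(T_0)\,\delta T,
\qquad
\delta F_2=\delta L_3(T_0)+L_3'(T_0)\,\delta T .
\]
On the background $f_1-f_2\equiv0$, so the first formula reduces to $\delta F_1=f_1(T_0)\,\delta z(T_0)$. This is $\sigma$-odd, so it vanishes on even variations.

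$F_2$ is $\sigma$-even, so it vanishes on odd variations. In the even subsystem, $\delta T$ is determined by $\delta H(T_0)+H'(T_0)\,\delta T=0$ with $H'(T_0)=-6\neq0$. The correction term $L_3'(T_0)\,\delta T$ then combines with $\delta L_3(T_0)$. Using the linearized trace constraint, we would check that the total is $\tfrac23 y'(T_0)$, as stated before the proposition. This gives the block-triangular (in fact block-diagonal) structure.

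\emph{Step 2: nonzero odd transfer.} This part needs care. The claim is that $\Psi_o(T_0)\neq0$ for the complex Frobenius solution $\Psi_o$ of \eqref{eq:odd-sine}. Abel's formula gives $W(\Psi_o,\overline{\Psi_o})=C\sin^{-6}t$. Computing $C$ from the leading terms $t^{\alpha_\pm}$ gives $C=(\alpha_--\alpha_+)\cdot(\text{leading coefficient})=-2i\omega\neq0$.

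Suppose $\Psi_o(T_0)=0$. Then $\overline{\Psi_o}(T_0)=0$ as well, so $W(T_0)=0$, a contradiction. Hence the complex transfer coefficient $f_1(T_0)\Psi_o(T_0)$ is nonzero. For the real solution $\Re(e^{i\theta}\Psi_o)$, the $F_1$-response is $f_1(T_0)\,\Re(e^{i\theta}\Psi_o(T_0))$, which is a nontrivial sinusoid in $\theta$.

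We would also record that $\alpha_\pm$ are the roots of $\alpha(\alpha-1)+6\alpha+10=0$, namely $\alpha^2+5\alpha+10=0$. This gives $\alpha_\pm=-\tfrac52\pm i\tfrac{\sqrt{15}}2$.

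\emph{Step 3: positive even transfer.} Take the regular solution $y\sim t^{\beta_+}$ with positive leading coefficient; here $\beta_+$ solves $\beta^2+5\beta-5=0$. Use the divergence form $(\sin^6t\,y')'=5\sin^4t\,y$. Near $t=0$ we have $\sin^6t\,y'\sim\beta_+t^{5+\beta_+}>0$.

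Let $t_1\le T_0$ be a hypothetical first zero of $y$. On $(0,t_1)$ we have $y>0$, so $\sin^6t\,y'$ is increasing from a positive value. Thus $y'>0$ on $(0,t_1)$, and $y$ is increasing, which contradicts $y(t_1)=0$. Hence $y'(T_0)>0$, and by Step 1, $\delta F_2=\tfrac23y'(T_0)>0$.

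The main obstacle is the identity $\delta F_2=\tfrac23 y'(T_0)$ in Step 1. It requires tracking the moving event through the linearized constraint so that the trace mode does not contribute. I would handle it by choosing the even variation with $\delta H(T_0)=0$, using the homothety/trace gauge, so that $\delta T=0$. Then $\delta L_3=\delta(f_3'/f_3)$, and with $L_1=L_2$ and $H=0$ this equals $\tfrac23 y'$ directly.
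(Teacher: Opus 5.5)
Your proposal is correct and follows the paper's argument: Weyl parity kills the cross terms, the Abel--Wronskian identity $W(\Psi_o,\overline{\Psi_o})=C\sin^{-6}t$ with $C\neq0$ rules out $\Psi_o(T_0)=0$, and the divergence form $(\sin^6t\,y')'=5\sin^4t\,y$ gives $y'(T_0)>0$. The step you flag as the main obstacle needs no gauge choice. On the symmetric subsystem one has exactly $L_3=\tfrac H6+\tfrac23y'$, so $F_2=\tfrac23y'(T)$ at the event, and the $\delta T$ term is multiplied by the background $y''\equiv0$; this is precisely the paper's remark that the moving event does not contribute at first order.
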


\begin{proof}
Weyl parity forces the cross terms to vanish.  The Wronskian calculation in
the preceding subsection gives the nonzero complex odd transfer coefficient;
the integral identity above gives positivity of the even entry.
\end{proof}

Thus the detector is triangular to first order.  The even direction gives a
genuine transverse motion of \(F_2\), while the od direction gives a
nonzero oscillatory motion of \(F_1\).  Schematically,
\[
 \begin{array}{ccl}
 \text{even perturbation} &\longrightarrow& \text{move \(F_2\) through zero},\\
 \text{odd perturbation}  &\longrightarrow& \text{rotate the sign of \(F_1\)}.
 \end{array}
\]
This is the linear core of the detection argument.

\section{Why the oscillation is genuinely present}

The eigenvalue calculation alone does not exclude zero oscillatory
coefficient for the particular threshold trajectory selected in Section~3.
Such vanishing would remove the rotating signal needed for repeated closing.
Weyl symmetry rules out this possibility: the odd variables
satisfy an exactly homogeneous linear equation along the threshold.  Their
asymptotic transport is therefore invertible.  A nonzero odd displacement
cannot be transported into zero asymptotic amplitude.  Since the focal
trajectory is not Weyl symmetric at its collapsing end, its odd displacement
is nonzero, and so is its limiting oscillatory coefficient.

By Proposition~\ref{prop:threshold}, the threshold trajectory converges to
\(p_2\).  Hyperbolicity gives exponential convergence in projective time.
After removing the universal cone decay and rotation from the odd
two-vector, only an integrable perturbation of a constant-coefficient system
remains.  Hence the renormalized odd vector has a finite limiting amplitude.
In radial notation this is the complex coefficient \(Q\) in
\begin{equation}\label{eq:Q}
 z_*(r)=r^{-5/2}\Re(Qr^{i\omega})+o(r^{-5/2}).
\end{equation}

\begin{lemma}\label{lem:Q}
The coefficient $Q$ in \eqref{eq:Q} is nonzero.
\end{lemma}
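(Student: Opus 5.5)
The plan is to follow the mechanism sketched just before the statement: the odd variables satisfy an exactly homogeneous linear system along the threshold, so their asymptotic transport is injective. We write the threshold in inner variables and set \(z_*=\log(f_1/f_2)\), with odd velocity \(\zeta_*=L_1-L_2\). The Weyl involution exchanging the first two summands preserves the Ricci-flat equations \eqref{eq:master-einstein} with the constant \(6\) removed. Hence the difference of the \(i=1\) and \(i=2\) equations takes the form
\[
 z_*'=\zeta_*,\qquad \zeta_*'=-H\zeta_*+\bigl(r_1-r_2\bigr)(f),
\]
and \(r_1-r_2\) vanishes identically on \(\{f_1=f_2\}\). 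By \eqref{eq:intro-orbit-ricci}, \(r_1-r_2\) is a smooth function that is odd under the exchange \(f_1\leftrightarrow f_2\). Writing it as \(a(f)\,z_*\), via Hadamard's lemma in \(z\) at fixed \(f_1f_2,f_3\), turns the pair \(v_*=(z_*,\zeta_*)\) into a solution of a linear homogeneous system \(v_*'=A(s)v_*\). Its coefficients depend only on the threshold itself, and there is no inhomogeneous term.

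Next we pass to the logarithmic variable \(s=\log\tau\) (equivalently projective time). By Proposition~\ref{prop:threshold} and hyperbolicity of \(p_2\), we have \(A(s)=A_\infty+E(s)\) with \(|E(s)|\le Ce^{-\kappa s}\). The matrix \(A_\infty\) is the odd linearization at the cone \eqref{eq:cone}, and its eigenvalues are \(\alpha_\pm=-\tfrac52\pm i\omega\), which are distinct and complex. We conjugate by the diagonalizing matrix and factor out \(e^{\alpha_\pm s}\). Because both eigenvalues share the real part \(-5/2\), the renormalized vector \(w(s)=e^{5s/2}P^{-1}v_*(s)\) (with the rotation absorbed) satisfies \(w'=\tilde E(s)w\) with \(\tilde E\in L^1\). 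Levinson's theorem, or simply Gronwall applied to \(\log|w|\), then gives two conclusions. First, \(w(s)\to w_\infty\) exists, and \(w_\infty\) is related to \(Q\) by an invertible linear map. Second, and this is the key point, \(|w(s)|\ge e^{-\|\tilde E\|_{L^1}}|w(s_0)|\). So \(w_\infty=0\) forces \(w\equiv0\), and equivalently forces \(v_*\equiv0\) on the whole regular interval.

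It remains to show that \(v_*\not\equiv0\). This is not automatic, and here we use the focal end. If \(v_*\equiv0\), then \(f_1\equiv f_2\) near \(t=0\). But \(f_1(0)=0\) with \(f_1'(0)=1\), while \(f_2(0)=b\neq0\) (normalized to \(1\) in the inner model). The collapsing factor and a noncollapsing factor therefore cannot coincide. So \(z_*\to-\infty\) as \(t\downarrow0\), and \(v_*\) is nonzero at some regular time \(s_0\). The lower bound from the previous step then propagates this to \(w_\infty\neq0\), and hence \(Q\neq0\). The complex coefficient multiplying \(r^{i\omega}\) in \eqref{eq:Q} is exactly the combination of the two components of \(w_\infty\) for conjugate eigenvectors. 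Since \(v_*\) is real, \(w_\infty\) has conjugate components, so \(Q\) is nonzero precisely when \(w_\infty\) is.

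We expect the main obstacle to be justifying the exact homogeneity and the integrability of the error \(E\). The factorization \(r_1-r_2=a\,z\) must hold with \(a\) smooth and bounded along the whole threshold, including near the focal end, where \(f_1\to0\). We will avoid the focal end by starting the transport at a fixed regular time \(s_0\), which suffices. We must also check that the translation between physical/radial time and projective time does not destroy integrability. This follows because \(H\tau\to6\) exponentially fast at \(p_2\). Finally, we must confirm that \(A_\infty\) is diagonalizable, which holds since its eigenvalues are distinct.
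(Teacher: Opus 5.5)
Your proposal is correct and follows essentially the same route as the paper. Both arguments use a Weyl-homogeneous odd linear system along the threshold, and treat its coefficients as an $L^1$ perturbation of the constant cone matrix once the $-5/2$ decay and the rotation are removed. Both then deduce injectivity of the asymptotic transport, and reach a contradiction from the focal end via $f_1(0)=0\neq f_2(0)$. The differences are cosmetic: you use the metric odd pair with the explicit factorization of $r_1-r_2$ and a Gronwall lower bound, whereas the paper uses Chi's projective pair $(\omega_1-\omega_2,d_1-d_2)$ with Hadamard's formula and Liouville's determinant formula for $\Phi_\infty$. One small correction: in $s=\log\tau$ the second component must be the weighted $\tau(L_1-L_2)=z_s$, not $L_1-L_2$, for the coefficient matrix to be asymptotically constant.
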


\begin{proof}
The projective shape variables split under the Weyl involution into even and
odd pairs
\[
 e=(\omega_1+\omega_2,d_1+d_2),\qquad
 o=(\omega_1-\omega_2,d_1-d_2).
\]
Equivariance gives \(F_o(e,-o)=-F_o(e,o)\).  Since \(F_o(e,0)=0\),
Hadamard's formula yields the exact homogeneous equation
\begin{equation}\label{eq:odd-homogeneous-threshold}
 o'=A(e,o)o
\end{equation}
for a smooth \(2\times2\) matrix \(A\).  At \(p_2\),
\[
 A_0=
 \begin{pmatrix}
 0&1\\ -5/18&-5/6
 \end{pmatrix},
\qquad
 \operatorname{spec}A_0=
 -\frac5{12}\pm i\frac{\sqrt{15}}{12}.
\]
Multiplying projective exponents by \(6\) gives the radial pair
\(-5/2\pm i\sqrt{15}/2\).

By Proposition~\ref{prop:threshold}, the threshold converges to \(p_2\).
For completeness, the four constrained shape exponents can be read off
without any sign convention ambiguity in the common-scale variable.  The even
model equation has indicial polynomial
\[
 \beta(\beta-1)+6\beta-5=\beta^2+5\beta-5,
\]
whereas the odd model equation has indicial polynomial
\[
 \alpha(\alpha-1)+6\alpha+10=\alpha^2+5\alpha+10.
\]
Hence the four radial shape exponents are
\[
 \beta_+=\frac{-5+3\sqrt5}{2},\qquad
 \beta_-=\frac{-5-3\sqrt5}{2},\qquad
 -\frac52\pm i\frac{\sqrt{15}}2.
\]
These are exactly the four eigenvalues of the gauge-fixed shape
linearization.  The Hamiltonian constraint removes/reconstructs the common
radial scale rather than contributing another free shape mode; its nonzero
common-scale derivative therefore does not alter these four roots.
On the cone \(H=6/r\), hence
\(\eta=6\log r+\mathrm{const}\); the projective exponents are therefore the
displayed radial exponents divided by \(6\).  Thus \(p_2\) is hyperbolic, with exactly one unstable real direction and a
three-dimensional stable space.  A trajectory converging to \(p_2\) is
therefore eventually contained in its local stable manifold and converges
exponentially.  Since \(A(e,o)\) is smooth,
\[
 B(s):=A(e(s),o(s))-A_0\in L^1([s_0,\infty)).
\]
After a fixed real linear change of variables,
\[
 A_0=-\frac5{12}I+\frac{\sqrt{15}}{12}J,\qquad J^2=-I.
\]
For \(y=To\), set
\[
 w(s)=e^{5s/12}
 \exp\!\left(-\frac{\sqrt{15}}{12}Js\right)y(s).
\]
Then \(w'=C(s)w\) with \(C\in L^1\).  Its fundamental matrix
\(\Phi(s,s_0)\) converges to a matrix \(\Phi_\infty\), and
\[
 \det\Phi_\infty
 =
 \exp\!\left(\int_{s_0}^{\infty}\operatorname{tr}C(\sigma)\,d\sigma\right)
 \neq0.
\]
Thus the renormalized limiting odd amplitude vanishes if and only if
\(o(s_0)=0\).

The radial coefficient \(Q\) differs from this limiting amplitude by a
nonzero complex constant, because \(\log r=s/6+c_0+o(1)\) with exponentially
convergent correction.  Therefore \(Q=0\) would imply \(o(s_0)=0\).  The Weyl-fixed set \(o=0\)
is invariant, and uniqueness for the regular ODE propagates \(o=0\) both
forward and backward on every compact subinterval of positive physical time.
Hence the threshold would satisfy \(f_1=f_2\) for every \(t>0\).  Passing to
the smooth focal limit \(t\downarrow0\) is impossible: \(f_1(0)=0\) whereas
\(f_2(0)>0\).  Hence \(Q\neq0\).
\end{proof}

Thus the threshold does not merely approach a cone that \emph{allows}
oscillation: it approaches that cone with a definite nonzero oscillatory
amplitude.  This is the point that turns the complex indicial roots into an
actual sequence of sign changes.  The remaining task is quantitative.  We
must transport this small rotating signal from the Ricci-flat inner model to
the exact positive-Einstein problem without losing it is the error terms.

\section{Transporting the responses to the Einstein problem}

The Ricci-flat analysis provides a tunable nonoscillating direction and a
nonzero oscillating direction.  Both must persist when the small
positive-Einstein term is restored.

There is an important asymmetry between them.  The even response is of
ordinary size, so a small \emph{absolute} perturbation estimate is enough.
The odd response is much smaller: at the matching scale its size is itself
tending to zero.  An estimate saying only that the Einstein solution is
close to the Ricci-flat one could therefore be larger than the signal we are
trying to detect and could destroy its sign.

What we need in the odd channel is \emph{relative} control: the error must be
small compared with the oscillatory response itself.  Weyl symmetry gives
exactly this.  It makes the odd equation homogeneous, so restoring the
Einstein term may change the coefficients of the odd equation but cannot
create a new odd forcing independent of the existing signal.  This is the
structural reason the tiny phase information survives.

The section therefore has three tasks:
\[
 \begin{array}{ll}
 \text{(i)} & \text{choose the even detuning parameter at the correct scale},\\
 \text{(ii)}& \text{prove ordinary absolute shadowing of the full trajectory},\\
 \text{(iii)}&\text{upgrade the odd part to relative shadowing using symmetry}.
 \end{array}
\]
After these steps the inner Ricci-flat responses can be matched to the outer
positive-Einstein problem.

The projective shape system has only one unstable exponent, namely $\beta_+$.
Choose a small hyperbolic chart about $p_2$.  Fix a sufficiently late regular
time $s_0$ for the threshold trajectory, still inside this chart.  By
continuous dependence, all focal trajectories with $c$ close to $c_*$ are
also in the chart at time $s_0$.  Let $a_{\rm loc}$ be an analytic signed
unstable coordinate there, chosen so that the local stable manifold is
$a_{\rm loc}=0$ and the positive sign is the exit direction toward $S>1$.
Define
\[
 a(c)=a_{\rm loc}(\gamma_c(s_0)).
\]
This fixed-time definition makes $a(c)$ an analytic function of the focal
parameter and avoids any dependence on a diverging entrance time.

\begin{lemma}[Signed threshold coordinate]\label{lem:signed-a}
After orienting $a$, the function $a(c)$ changes sign at $c_*$.  More
precisely, for some finite odd integer $m\ge1$,
\begin{equation}\label{eq:a-expansion}
 a(c)=\kappa(c-c_*)^m+O((c-c_*)^{m+1}),
 \qquad \kappa\ne0.
\end{equation}
Consequently $a$ is a strictly monotone local coordinate and its inverse is
continuous, though it need not be differentiable when $m>1$.
\end{lemma}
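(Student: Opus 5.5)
The plan has three parts: show that $a$ is real analytic near $c_*$, show that it vanishes at $c_*$, and show that it takes both signs there. Analyticity then gives a finite vanishing order $m$, and the sign change forces $m$ to be odd.

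\emph{Analyticity and vanishing.} The map $c\mapsto\gamma_c(s_0)$ is real analytic. On the fixed Weyl-equivariant branch of the Eschenburg--Wang construction, the singular germ is the fixed point of a contraction that depends analytically on the finite-dimensional focal data, so the analytic implicit-function theorem makes the germ at a small fixed time analytic in $c$. From there, the flow of the regular analytic ODE up to the fixed regular time $s_0$ preserves analyticity. Composing with the analytic chart coordinate $a_{\rm loc}$ shows that $a$ is analytic on a neighbourhood of $c_*$. Next, $a(c_*)=0$: by Proposition~\ref{prop:threshold} the threshold converges to $p_2$ and stays in the hyperbolic chart after $s_0$ (shrink the chart and take $s_0$ later if necessary). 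A trajectory that converges to $p_2$ and remains in the chart lies on the local stable manifold, which is $\{a_{\rm loc}=0\}$.

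\emph{Sign change.} This is the step that rules out $a\equiv0$ and fixes the parity of $m$. There is one unstable direction, and the chart is oriented so that $a_{\rm loc}>0$ exits toward $S>1$. Consider the two sides of $c_*$:
\begin{itemize}[nosep]
\item Parameters $c\in I_{\mathrm{ent}}$ near $c_*$ enter $\widehat S_3$ through the face $S=1$ only at first-contact times $T_c\to\infty$, by \eqref{eq:Ttoinfty}. So on a long interval after $s_0$ they shadow the threshold toward $p_2$ and then leave the chart along the unstable manifold. They leave along the branch heading to $S>1$, because leaving along the opposite branch would not produce a crossing of $S=1$. For the opposite branch I would use that it heads to $S<1$, away from the face, toward incompleteness, as in the ejection/blow-down argument around \eqref{eq:H-positive-threshold}.
\item Parameters just beyond $c_*$ are not in $I_{\mathrm{ent}}$. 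They cannot exit toward $S>1$ through the entrance face: the same persistence argument that proved \eqref{eq:Ttoinfty} (a noncritical contact with $\Xi>0$ is stable) would place them in $I_{\mathrm{ent}}$.
\end{itemize}
Hence $a>0$ on one side and $a\le0$ on the other. If $a$ vanished identically, every nearby parameter would converge to $p_2$, so all of them would have the threshold's behaviour, including never meeting $S=1$, contradicting the entrance of $I_{\mathrm{ent}}$-parameters.

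\emph{Conclusion.} Since $a$ is analytic and not identically zero, \eqref{eq:a-expansion} holds with some finite $m\ge1$ and $\kappa\ne0$. The strict sign change across $c_*$ forces $m$ to be odd. After orienting $a$ so that $\kappa>0$, the expansion gives $a'$ of one sign on a punctured neighbourhood, since $a'=m\kappa(c-c_*)^{m-1}(1+O(c-c_*))$ and $m-1$ is even. So $a$ is strictly monotone, and its inverse is continuous, behaving like $|a|^{1/m}$, hence possibly not differentiable when $m>1$.

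The main obstacle is the second bullet of the sign-change step: identifying which unstable branch corresponds to entry through $S=1$. The difficulty is to show that a solution shadowing $p_2$ and then leaving on the $S>1$ branch actually crosses $S=1$ transversally with $\Xi>0$. I would first try to derive this from the unstable eigenvector at $p_2$, which points in the direction that increases $S$, combined with the contact analysis \eqref{eq:Scontact}.
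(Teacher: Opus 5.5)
Your skeleton matches the paper's: analyticity of $a$ via the Eschenburg--Wang fixed point and the analytic implicit-function theorem, then $a(c_*)=0$ from Proposition~\ref{prop:threshold}, then a sign change across $c_*$, then oddness of the first nonvanishing order. The gap is the step you flag as the main obstacle, the implication $a(c)>0\Rightarrow c$ enters $\widehat S_3$, which the paper actually proves.

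Persistence of a noncritical contact, as in the proof of \eqref{eq:Ttoinfty}, cannot replace that step. The $c_*$-trajectory has no finite-time contact with $S=1$ to perturb. For a parameter beyond $c_*$, persistence presupposes exactly the transverse crossing with $\Xi>0$ that has to be shown. Your idea of using the unstable eigenvector is right, but its companion is not \eqref{eq:Scontact}, which concerns tangential contacts. It is the first-order identity $W'=2\Xi$ on $W=0$. The paper's argument runs as follows.
\begin{enumerate}
\item Give the chart exit faces $a_{\rm loc}=\pm\varepsilon$. As $c\to c_*$ the stable coordinates at exit tend to zero uniformly.
\item The positive unstable eigenvector has $\delta\omega_1=\delta\omega_2>0$ and $\delta d_1=\delta d_2>0$, hence $\delta S>0$. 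After shrinking the chart, every nearby $c$ with $a(c)>0$ leaves through $a_{\rm loc}=+\varepsilon$ and crosses $S=1$ transversally from $S<1$ to $S>1$.
\item Chi's Proposition~3.4 keeps the trajectory in $S_3$ up to that crossing. Since $S-1=W/Z_3$ with $Z_3>0$, the transversality $S'>0$ gives $\Xi>0$.
\item The crossing point therefore lies in $S_3\cap\{W=0\}\cap\{\Xi>0\}\subset\widehat S_3$ and is a noncritical entrance. Chi's Lemma~3.6 then traps the trajectory.
\end{enumerate}

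Three further points.

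\emph{The component issue.} Even with that implication, an entering parameter just beyond $c_*$ would not lie in $I_{\mathrm{ent}}$. Since $c_*$ itself does not enter, such a parameter sits in a different component of the entering set. So maximality of the component alone does not exclude an even-order $a$ that is positive on both sides. The paper's appeal to ``maximality of $c_*$'' has the same weakness. It is cleanest to let $c_*$ be the infimum of the whole (open) entering set on $c<0$, so that no parameter beyond $c_*$ enters at all.

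\emph{The entrance-side sign.} Your reason why entrance-side parameters exit through the positive face does not follow from the blow-down argument around \eqref{eq:H-positive-threshold}. That argument concerns a zero of $H$ on the threshold, not where the negative branch of the unstable manifold of $p_2$ goes. The paper states this exit direction as a consequence of hyperbolicity without further argument, so here you are not behind it. Still, neither text excludes a negative exit followed by a later entrance.

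\emph{The argument against $a\equiv0$.} Drop your separate argument. Since $p_2$ lies on the face $S=1$, convergence to $p_2$ does not by itself preclude an earlier crossing. In any case $a\not\equiv0$ already follows once $a>0$ on the entrance side.
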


\begin{proof}
Proposition~\ref{prop:threshold} gives $a(c_*)=0$.  The positive unstable
eigenvector of the shape Jacobian has equal $\omega$ components and, after
normalization, may be chosen with
\[
 \delta\omega_1=\delta\omega_2>0,
 \qquad \delta d_1=\delta d_2>0.
\]
Thus its positive face points from $S<1$ toward the entrance side $S>1$.
Trajectories in $I_{\mathrm{ent}}$ that approach $c_*$ spend arbitrarily
long time in the $p_2$ chart.  Hyperbolicity and the one-dimensional unstable
shape direction imply that their first exit from a sufficiently small chart
is through the positive unstable face.  Choose the chart with fixed unstable exit faces $a_{\rm loc}=\pm\varepsilon$.
The stable coordinates at the exit tend uniformly to zero as $c\to c_*$.
Since the positive unstable eigenvector has $\delta S>0$, after shrinking the
chart every sufficiently near focal parameter with $a(c)>0$ exits through
$a_{\rm loc}=+\varepsilon$ and crosses from $S<1$ to $S>1$ transversely.  The already stated Proposition~3.4 input from~\cite{Chi} keeps the
trajectory in $S_3$ until that crossing, and the contact derivative has the same sign as
\[
 \Xi=Z_1(X_1-X_3)+Z_2(X_2-X_3).
\]
Hence $\Xi>0$ at the first crossing, so the state lies in $\widehat S_3$.
The already stated Lemma~3.6 input from~\cite{Chi} gives forward trapping
once the crossing has occurred.  Therefore every
sufficiently near parameter with $a(c)>0$ belongs to $I_{\mathrm{ent}}$.
By maximality of $c_*$, parameters immediately beyond the endpoint cannot
have $a(c)>0$.

The local hyperbolic coordinates are real analytic.  The focal curve is also
real analytic in \(c\) on the fixed Eschenburg--Wang branch: the formal
coefficients in the singular initial-value construction depend analytically
on \(c\), and the remainder is the fixed point of a uniformly contracting
operator \(\Phi_c\) which is analytic in \((c,u)\).  Since
\(I-D_u\Phi_c\) is invertible, the analytic implicit-function theorem gives
analytic dependence of the fixed point on \(c\).  Hence \(a(c)\) is real
analytic.  It is not identically zero, because parameters on
the entrance side have $a(c)>0$.  Therefore it has a finite first nonzero
Taylor term at $c_*$.  Since it is positive on one side and nonpositive on the
other, that first nonzero order is odd, which gives \eqref{eq:a-expansion}.
The derivative of the leading odd monomial has fixed sign off $c_*$, so after
shrinking the interval $a$ is strictly monotone.
\end{proof}

The scale in the next definition is chosen so that the unstable even mode,
whose cone behaviour is $r^{\beta_+}$, reaches order one after transport from
the focal scale $b$ to a fixed outer scale.  We therefore parameterize the
inner family directly by its unstable amplitude and set
\begin{equation}\label{eq:detune}
 a=\mu b^{\beta_+},\qquad |\mu|\le\mu_0.
\end{equation}
Lemma~\ref{lem:signed-a} determines a unique nearby focal parameter
$c=c(b,\mu)$ on the chosen branch.  It depends continuously on $(b,\mu)$ and
tends to $c_*$ uniformly as $b\downarrow0$.  We record the uniform detuning
modulus
\begin{equation}\label{eq:rho-detune}
 \rho(b)=\sup_{|\mu|\le\mu_0}|c(b,\mu)-c_*|\longrightarrow0.
\end{equation}
In fact the finite-order expansion \eqref{eq:a-expansion} gives the uniform
rate
\begin{equation}\label{eq:rho-power-rate}
 \rho(b)=O\!\left(b^{\beta_+/m}\right).
\end{equation}
Indeed, locally $|c-c_*|\le C|a|^{1/m}$ and
$|a|\le\mu_0b^{\beta_+}$.  We will only use continuity of the inverse
$c=c(a)$, together with the power bound \eqref{eq:rho-power-rate} when a
logarithmic loss occurs in the outer matching.  The parameter \(\mu\) is the target coefficient of the regular even mode.
The corresponding exact outer coefficient will be introduced only when the
outer matching begins.

Blow up the positive-Einstein solution at the focal scale by
\[
 t=b\tau,\qquad f_i=bF_i.
\]
Equivalently,
\[
 b^{-2}g=d\tau^2+\sum_i F_i(\tau)^2Q|_{\mathfrak m_i}.
\]
Thus \(\tau\) is again unit-speed for the rescaled metric \(b^{-2}g\).
This is a genuine homothetic blow-up used to obtain the Ricci-flat inner
limit: its Einstein constant is \(6b^2\), which tends to zero.  It is
not a normalization of the orbit-space length of the original
\(\Ric=6g\) metric.  Write \(s=\log\tau\) only as an evolution parameter for the normalized
dynamical system; \(s\) is not the arclength coordinate of the metric.  In the
projective gauge-fixed shape chart (the
radial $\alpha=1$ mode being reconstructed from the constraint),
\begin{equation}\label{eq:inner}
 U'=F_{\RF}(U)+b^2e^{2s}R(U).
\end{equation}
We match the inner and outer descriptions at an intermediate scale
$b\ll\delta_b\ll1$.  The convenient choice
\[
 \delta_b=b^{1/2},\qquad
 s_b=\log(\delta_b/b)=\frac12\log(1/b).
\]
The even unstable component at the overlap is
$\mu\delta_b^{\beta_+}=o(1)$.

\subsection{Why the small oscillation cannot be additively forced}

Suppose the odd Ricci-flat signal has size \(A\ll1\).  An absolute
positive-Einstein perturbation of size \(o(1)\) does not preserve its sign
unless the error is \(o(A)\).  Weyl symmetry supplies the required relative
control.
The Weyl involution fixes the even background and reverses the odd variable.
Consequently every term in the odd equation contains the odd variable
itself.  The Einstein perturbation can amplify, damp, or rotate an existing
odd signal, but it cannot manufacture an independente one.  We now record
this exact factorization before making estimates.

For \(d=2\), the orbit-Ricci convention \eqref{eq:intro-orbit-ricci}
has cyclic differences
\begin{align}
 r_1-r_2&=
 \frac{(f_1^2-f_2^2)(f_1^2+f_2^2-3f_3^2)}
 {2f_1^2f_2^2f_3^2},\label{eq:ricci-diff12}\\
 r_2-r_3&=
 \frac{(f_2^2-f_3^2)(f_2^2+f_3^2-3f_1^2)}
 {2f_1^2f_2^2f_3^2},\\
 r_3-r_1&=
 \frac{(f_3^2-f_1^2)(f_3^2+f_1^2-3f_2^2)}
 {2f_1^2f_2^2f_3^2}.
\end{align}
Thus all curvature-difference identities in the positive-Einstein part use
one convention.  Since in our normalization \(L_i'=r_i-6-HL_i\), the Einstein constant
cancels from the odd difference equation.  Put $m^2=f_1f_2$.  Then
\begin{equation}\label{eq:oddexact}
 z''+Hz'
 =
 \frac{\sinh z\,[2m^2\cosh z-3f_3^2]}
 {m^2f_3^2}.
\end{equation}
At the sine-cone background,
\[
 m^2=\frac{\sin^2t}{5},\qquad
 f_3^2=\frac{2\sin^2t}{5},\qquad H=6\cot t,
\]
so linearizing \eqref{eq:oddexact} gives
\[
 z''+6\cot t\,z'+10\csc^2t\,z=0,
\]
in agreement with \eqref{eq:odd-sine}.  This also checks the coefficient and
sign conventions independently.

The statement ``odd in $z$'' is a consequence of the Weyl involution, not an
extra assumption on $f_3$.  Indeed the reflection exchanging the first two
Wallach summands acts by
\[
 (f_1,f_2,f_3)\longmapsto(f_2,f_1,f_3),\qquad
 z\longmapsto-z,
\]
while $m^2=f_1f_2$, $f_3$ and $H$ are fixed.  Hence the coefficient
\[
 \frac{2m^2\cosh z-3f_3^2}{m^2f_3^2}
\]
is Weyl-even and the factor $\sinh z$ is Weyl-odd.  Thus
\eqref{eq:oddexact} is equivariant and the subsystem $z=z'=0$ is invariant.
Equivalently, after writing the odd equation as a first-order system, every
term contains the odd two-vector.  Positive Einstein may change its
coefficients through the Weyl-even background, but it cannot create an
additive odd forcing.

\subsection{Absolute control versus relative odd control}

We can now state precisely the two levels of approximation.  For the even
variables and the background geometry, ordinary absolute closeness is enough.
For the oscillating variable, the relevant quantity is instead the ratio
\[
 \frac{\text{Einstein odd signal}-\text{Ricci-flat odd signal}}
      {\text{Ricci-flat odd signal}}.
\]
The goal is to prove that this ratio tends to zero.  This distinction is the
reason for separating the next two estimates.

Let $U_{\PE}$ and $U_{\RF}$ have the same normalized focal data.  The
four-dimensional gauge-fixed shape linearization at $p_2$ is hyperbolic, with
one unstable exponent $\beta_+<2$ and three stable exponents.  We record the
shadowing estimate explicitly.

\begin{lemma}[Absolute inner shadowing]\label{lem:absolute-inner}
For $s_0$ fixed sufficiently far inside the cone chart,
\begin{equation}\label{eq:pointwise}
 \|U_{\PE}(s)-U_{\RF}(s)\|\le Cb^2e^{2s},
 \qquad s_0\le s\le s_b,
\end{equation}
uniformly for $|\mu|\le\mu_0$.
\end{lemma}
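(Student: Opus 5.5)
The approach is a Gronwall-type comparison in the inner chart, using that the forcing $b^2e^{2s}R(U)$ in \eqref{eq:inner} grows like $e^{2s}$ while the only unstable growth along $U_{\RF}$ has rate $\beta_+<2$ in radial units. Set $V=U_{\PE}-U_{\RF}$. Subtracting the two systems gives
\[
 V'=\bigl(F_{\RF}(U_{\PE})-F_{\RF}(U_{\RF})\bigr)+b^2e^{2s}R(U_{\PE})
 =A(s)V+N(s,V)+b^2e^{2s}R(U_{\PE}),
\]
where $A(s)=DF_{\RF}(U_{\RF}(s))$ and $N$ is quadratic in $V$. The plan is to prove the estimate by a continuity (bootstrap) argument on the maximal interval $[s_0,s_1]\subset[s_0,s_b]$ on which $\|V\|\le 2Cb^2e^{2s}$ and $U_{\PE}$ stays in the fixed cone chart. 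On that interval $R(U_{\PE})$ is bounded, and $\|N\|\le C'\|V\|^2\le C''b^4e^{4s}\le C''b^2e^{2s}\cdot b^2e^{2s_b}=C''b^3e^{2s}$. This is negligible because $b^2e^{2s_b}=b$ for the choice $\delta_b=b^{1/2}$.

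The first step handles the initial segment. On $[\log\tau_{\min},s_0]$, including the focal end, both solutions have the same normalized singular data. Since the rescaled Einstein constant is $6b^2$, the Eschenburg--Wang fixed-point construction depends smoothly on it, and this gives $\|V(s_0)\|\le C b^2$ uniformly for $c$ near $c_*$. Here $c$ is near $c_*$ uniformly in $|\mu|\le\mu_0$ by \eqref{eq:rho-detune}. The regular ODE on the compact interval then propagates the $O(b^2)$ bound up to $s_0$.

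The second step works on $[s_0,s_b]$ with the linear propagator of $A(s)$. By Proposition~\ref{prop:threshold} and the hyperbolicity established in Lemma~\ref{lem:Q}, $U_{\RF}$ converges exponentially to $p_2$, so $A(s)=A_\infty+O(e^{-\gamma s})$ with $A_\infty$ the cone linearization in $s=\log\tau$ units. Its exponents are $\beta_+$ and three with negative real part. A perturbation of this kind with integrable error (Levinson/roughness of dichotomies) gives $\|\Phi(s,\sigma)\|\le C e^{(\beta_++\epsilon)(s-\sigma)}$ for $s\ge\sigma\ge s_0$. Variation of constants then yields
\[
 \|V(s)\|\le Ce^{(\beta_++\epsilon)(s-s_0)}b^2+\int_{s_0}^s Ce^{(\beta_++\epsilon)(s-\sigma)}b^2e^{2\sigma}\,d\sigma\le C b^2e^{2s},
\]
because $\beta_++\epsilon<2$ (note $\beta_+\approx0.854$). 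This closes the bootstrap with a constant independent of $b$, $\mu$ and $s_1$, so $s_1=s_b$. Staying in the chart holds because $U_{\RF}$ itself may leave the chart only through the unstable amplitude, which is $\mu e^{\beta_+ s}b^{\beta_+}\le\mu_0\delta_b^{\beta_+}=o(1)$ by \eqref{eq:detune}, while $V=O(b)$ there.

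The main obstacle is uniformity in $\mu$. The Ricci-flat comparison trajectory is not the threshold $U_{\RF}(c_*)$ but the detuned trajectory with $c=c(b,\mu)$, so $A(s)$ must be linearized along a trajectory that follows $p_2$ only up to $s_b$. I would handle this by treating all $U_{\RF}(c)$ for $c$ near $c_*$ as lying in a small neighborhood of $p_2$ for $s\in[s_0,s_b]$, and use the estimate $a=\mu b^{\beta_+}$ to show this. On that neighborhood $A(s)$ is within a small constant of $A_\infty$, which suffices for the exponential dichotomy bound with rate $\beta_++\epsilon$. One loses the integrability of $A-A_\infty$ but keeps a uniformly small perturbation, and the roughness theorem for dichotomies then still gives the bound.
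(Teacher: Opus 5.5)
Your proposal is correct and follows essentially the same argument as the paper. Both use variation of constants in the fixed cone chart, an $O(b^2)$ entry error at $s_0$ from smooth dependence on the rescaled Einstein constant $6b^2$, and a forcing $b^2e^{2s}$ that outgrows the unstable rate $\beta_+<2$. Two differences are cosmetic. You use a single forward growth bound $e^{(\beta_++\epsilon)(s-\sigma)}$ where the paper splits into stable and unstable dichotomy blocks. You also close a quadratic bootstrap using $b^2e^{2s_b}=b$, whereas the paper absorbs a Lipschitz term $\varepsilon_0|W|$ in the weighted norm $\sup b^{-2}e^{-2s}|W|$.
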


\begin{proof}
Put $W=U_{\PE}-U_{\RF}$.  On a fixed small cone chart,
\[
 W'=A(s)W+N(s,W)+b^2e^{2s}R(s),
 \qquad |N(s,W)|\le\varepsilon_0|W|,
\]
where $A(s)$ has a uniform exponential dichotomy obtained by perturbing the
constant linearization at $p_2$.  On each stable block the forward kernel is
bounded by $Ce^{-\gamma(s-\sigma)}$ for some $\gamma>0$, whereas on the
single unstable block it is bounded by
$Ce^{\beta_+(s-\sigma)}$.  Consequently the forcing satisfies
\[
 \int_{s_0}^s e^{-\gamma(s-\sigma)}b^2e^{2\sigma}\,d\sigma
 \le Cb^2e^{2s},
 \qquad
 \int_{s_0}^s e^{\beta_+(s-\sigma)}b^2e^{2\sigma}\,d\sigma
 \le \frac{b^2e^{2s}}{2-\beta_+}.
\]
Use the weighted norm
\[
 \|W\|_*=
 \sup_{s_0\le s\le s_b} b^{-2}e^{-2s}|W(s)|.
\]
On the fixed focal-to-$s_0$ segment, smooth dependence on the Einstein
constant gives $|W(s_0)|\le Cb^2$.  Variation of constants, including this
initial term, therefore gives
\[
 \|W\|_*\le C+C\varepsilon_0\|W\|_*.
\]
Choosing the cone chart so that $C\varepsilon_0<1/2$ yields a uniform bound
for $\|W\|_*$ and proves \eqref{eq:pointwise}.  Notice that this weighted
argument has no factor growing with the length $s_b-s_0$.
\end{proof}

The preceding lemma says that the full shape error is
$O(b^2e^{2s})$.  That is sufficient for the even/background variables, but
not for the odd phase: the odd solution itself decays like $e^{-5s/2}$.
We therefore compare the two odd solutions after removing their common decay
and rotation.  Introduce
\[
 v=\frac{z_s+\frac52z}{\omega},\qquad
 V=\binom zv,\qquad
 J=\begin{pmatrix}0&1\\-1&0\end{pmatrix}.
\]
At the cone the odd system is
\[
 V'=\left(-\frac52I+\omega J\right)V.
\]
Along the threshold it has the form
\begin{equation}\label{eq:odd-rotating-system}
 V'=\left(-\frac52I+\omega J+B_*(s)\right)V,
 \qquad \int_{s_0}^{\infty}\|B_*(s)\|\,ds<\infty.
\end{equation}
Thus
\[
 \widetilde V=e^{5(s-s_0)/2}e^{-\omega J(s-s_0)}V
\]
satisfies an $L^1$-coefficient equation.  Its fundamental matrix and inverse
are uniformly bounded.  The same remains true for the detuned Ricci-flat
family on $[s_0,s_b]$: the even unstable coordinate has size
$O(|\mu|b^{\beta_+}e^{\beta_+s})$, whose integral is
$O(|\mu|\delta_b^{\beta_+})$, while the stable-coordinate change at the
fixed entry section is $O(\rho(b))$ and decays exponentially.  Thus the total
$L^1$ coefficient change is $O(\rho(b)+\delta_b^{\beta_+})$.  Hence there are constants $0<c<C$, independent of $b$ and
$|\mu|\le\mu_0$, such that
\begin{equation}\label{eq:rotating-two-sided}
 c e^{-5(s-\sigma)/2}|V(\sigma)|
 \le |V(s)|
 \le C e^{-5(s-\sigma)/2}|V(\sigma)|,
 \qquad s_0\le\sigma\le s\le s_b.
\end{equation}

\begin{lemma}[Parity-weighted inner comparison]\label{lem:weighted-inner}
At the overlap $t=\delta_b=b^{1/2}$,
\begin{equation}\label{eq:weighted-inner}
 |V_{\PE}(s_b)-V_{\RF}(s_b)|
 \le C\delta_b^2\,|V_{\RF}(s_b)|,
\end{equation}
uniformly for $|\mu|\le\mu_0$.  Equivalently, the
positive-Einstein perturbation changes the rotating odd amplitude only by the
relative factor $1+O(\delta_b^2)$.
\end{lemma}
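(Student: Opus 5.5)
The plan is to exploit the exact Weyl factorization \eqref{eq:oddexact}: both $V_{\PE}$ and $V_{\RF}$ solve homogeneous linear systems of the same form
\[
 V'=\Bigl(-\tfrac52I+\omega J+B_\bullet(s)\Bigr)V,\qquad \bullet\in\{\PE,\RF\},
\]
where $B_\bullet$ is a smooth function of the even background and of $z$ itself. On the odd channel, $\sinh z/z$ is even and smooth, so it is absorbed into the coefficient. The comparison is then a linear-coefficient perturbation problem with no additive forcing, and the conclusion should come from a Duhamel estimate in the rotating frame.

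\textbf{Step 1: bound the coefficient difference.} I will show that
\[
 \|B_{\PE}(s)-B_{\RF}(s)\|\le Cb^2e^{2s},\qquad s_0\le s\le s_b .
\]
In inner variables the coefficient of the odd equation depends on the normalized background $U$ (through $m^2$, $f_3^2$, $H$ rescaled by $\tau$) and on $z$. The positive Einstein constant cancels exactly from the difference equation, so the only Einstein-dependent input is through $U$. Lipschitz dependence on $U$ in the compact cone chart, together with Lemma~\ref{lem:absolute-inner}, gives the bound for the background contribution. The $z$-dependence enters only quadratically, as $\sinh z/z-1=O(z^2)$. Since $|z|\le Ce^{-5s/2}$ by \eqref{eq:rotating-two-sided}, this contribution is also harmless once absolute closeness of the $z$'s is known; Lemma~\ref{lem:absolute-inner} supplies that closeness, because $z$ is a component of $U$.

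\textbf{Step 2: compare initial data at $s_0$.} The two solutions share normalized focal data, and the fixed segment from the focal end to $s_0$ depends smoothly on the Einstein constant $6b^2$. The odd part of that dependence again factors through the homogeneous odd equation. The difference at $s_0$ is then $O(b^2)|V_{\RF}(s_0)|$, not merely $O(b^2)$. I expect to obtain this by applying Hadamard's lemma to the odd subsystem on the compact segment: the solution operator is a matrix depending smoothly on $b^2$, acting on a common initial odd vector.

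\textbf{Step 3: Duhamel in the rotating frame.} Set $\widetilde V_\bullet=e^{5(s-s_0)/2}e^{-\omega J(s-s_0)}V_\bullet$, and let $\Phi_{\RF}(s,\sigma)$ be the Ricci-flat propagator of the renormalized system. By \eqref{eq:rotating-two-sided}, $\Phi_{\RF}$ and its inverse are uniformly bounded. The difference $D=\widetilde V_{\PE}-\widetilde V_{\RF}$ satisfies
\[
 D'=\widetilde B_{\RF}D+(\widetilde B_{\PE}-\widetilde B_{\RF})\widetilde V_{\PE}.
\]
Conjugation by the rotation preserves norms, so $\widetilde B_{\PE}-\widetilde B_{\RF}$ still has size $Cb^2e^{2s}$. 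Gronwall applied to $\widetilde V_{\PE}$ gives uniform boundedness with $|\widetilde V_{\PE}|\le C|V_{\RF}(s_0)|$, because $\int b^2e^{2s}\,ds\le C\delta_b^2$. Variation of constants then yields
\[
 |D(s_b)|\le C|V_{\RF}(s_0)|\Bigl(b^2+\int_{s_0}^{s_b}b^2e^{2\sigma}\,d\sigma\Bigr)\le C\delta_b^2|V_{\RF}(s_0)|,
\]
using $b^2e^{2s_b}=\delta_b^2$. Undoing the renormalization and applying the lower bound in \eqref{eq:rotating-two-sided} converts $e^{-5(s_b-s_0)/2}|V_{\RF}(s_0)|$ into $C|V_{\RF}(s_b)|$, which gives \eqref{eq:weighted-inner}.

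\textbf{Main obstacle.} I expect the hardest part to be Step 1, in particular confirming that the coefficient difference is truly $O(b^2e^{2s})$ uniformly in $\mu$. Two things need checking. First, the change of variables from $(z,z_s)$ to $V$ uses the cone constants $5/2$ and $\omega$, so the $L^1$ remainder $B_*$ must be handled identically for both solutions. Second, the detuned even coordinate of size $\mu b^{\beta_+}e^{\beta_+s}$ is common to both solutions and does not enter the difference. What I would verify first is that the even/background coordinates of $U$ determine the odd coefficient smoothly, with no division by $z$ beyond the analytic factor $\sinh z/z$.
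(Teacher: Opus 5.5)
Your architecture matches the paper's: a Weyl-homogeneous odd system, a relative comparison at a fixed entry time $s_0$, variation of constants in the rotating frame, and the two-sided bound \eqref{eq:rotating-two-sided} to return to $V_{\RF}(s_b)$. Steps~1 and~3 go through. The gap is Step~2. It is the only place where \emph{relative} smallness is created, and the mechanism you propose cannot create it.

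On the focal-to-$s_0$ segment the odd variable is not small: $z=\log(f_1/f_2)\to-\infty$ at the collapsing orbit, whose data are intrinsically non-Weyl-symmetric. So there is no small common odd vector on which a $b^2$-dependent linear propagator acts. More to the point, suppose you write $V_\bullet(s_0)=M(b^2)\xi$ with $M$ smooth in $b^2$ and $\xi$ common. Hadamard then only gives $|V_{\PE}(s_0)-V_{\RF}(s_0)|=|(M(b^2)-M(0))\xi|\le Cb^2|\xi|$. This is $\le Cb^2|V_{\RF}(s_0)|$ only if $M(0)$ does not shrink $\xi$, i.e.\ only if $|V_{\RF}(s_0)|$ is bounded below.

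That lower bound is the missing idea, and it is where Lemma~\ref{lem:Q} (more precisely, the fact underlying it) enters. The threshold's odd vector at $s_0$ is nonzero, because the set $o=0$ is invariant and the focal end is not Weyl-symmetric. Continuity in the focal parameter together with $\rho(b)\to0$ then gives $|V_{\RF}(s_0)|\ge c_1>0$ uniformly for $|\mu|\le\mu_0$, after shrinking the detuning range. With that, the plain absolute bound $|V_{\PE}(s_0)-V_{\RF}(s_0)|\le Cb^2$ from smooth dependence on $b^2$ is already relative; this is exactly how the paper argues. Your remark ``$O(b^2)|V_{\RF}(s_0)|$, not merely $O(b^2)$'' treats as a gain something that, at a fixed time, is equivalent to the absolute bound precisely because of this nondegeneracy. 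Your proposal never uses $Q\neq0$ or any lower bound, and without it the relative conclusion has no starting point.

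Once Step~2 is repaired this way, the rest of your argument is a legitimate and somewhat simpler variant of the paper's. The paper uses absolute shadowing only for the even background. It keeps the coefficient change caused by substituting $V_{\PE}$ for $V_{\RF}$ in the form $C|V_{\RF}|^2|Z|/|Y_0|$, using $\|D_{V_o}A_o\|\le C|V_o|$, and closes with a bootstrap and Gronwall against the integrable weight $|V_{\RF}|^2$.

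You instead use that Lemma~\ref{lem:absolute-inner} controls the whole shape vector, odd components included. Hence the coefficient, a smooth function of the weighted state that is even in $z$, changes by $O(b^2e^{2s})$ outright, and a single linear Duhamel estimate finishes with no bootstrap. Plain Lipschitz dependence on $z$ suffices here; the quadratic vanishing $\sinh z/z-1=O(z^2)$ is not needed.

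One small correction: the Einstein constant does not enter only through the projective shape $U$. The common weighted scale is reconstructed from a constraint containing $30t^2=30b^2e^{2s}$, and $t^2a$ depends on that scale. This explicit term has the same size $b^2e^{2s}$, however, so it is harmless. The paper's route would survive with even-background shadowing alone; yours relies on the stronger full-shape shadowing, which is available anyway.
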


\begin{proof}
The exact factorization \eqref{eq:oddexact} implies that the odd first-order
system is homogeneous.  In parity-adapted variables it has the form
\begin{equation}\label{eq:odd-homogeneous}
 V_o'=A_o(U_e,V_o)V_o
\end{equation}
for the Ricci-flat equation.  The positive-Einstein equation has the same
Weyl parity and therefore can be written
\begin{equation}\label{eq:odd-homogeneous-PE}
 V_o'=\bigl(A_o(U_e,V_o)+b^2e^{2s}B_o(U_e,V_o;b)\bigr)V_o.
\end{equation}
Here $U_e$ denotes the Weyl-even variables.  Since the odd vector field is
odd under $V_o\mapsto -V_o$, Hadamard's formula gives a smooth factorization
for which the coefficient matrices are even:
\[
 A_o(U_e,-V_o)=A_o(U_e,V_o),\qquad
 B_o(U_e,-V_o;b)=B_o(U_e,V_o;b).
\]
Consequently
\begin{equation}\label{eq:Ao-odd-derivative}
 \|D_{V_o}A_o(U_e,V_o)\|+\|D_{V_o}B_o(U_e,V_o;b)\|
 \le C|V_o|
\end{equation}
in a fixed cone chart.

Choose $s_0$ fixed and sufficiently far inside that chart.  On the fixed
focal-to-$s_0$ segment, smooth dependence on the Einstein perturbation parameter \(b^2\)
gives
\[
 |V_{\PE}(s_0)-V_{\RF}(s_0)|\le Cb^2.
\]
Because $Q\ne0$, the threshold rotating amplitude at $s_0$ is nonzero.
After shrinking the detuning range, the same is true uniformly for the
detuned Ricci-flat family, and hence
\begin{equation}\label{eq:weighted-initial}
 |V_{\PE}(s_0)-V_{\RF}(s_0)|
 \le Cb^2|V_{\RF}(s_0)|.
\end{equation}

Let $A_{\RF}(s)=A_o(U_{e,\RF}(s),V_{\RF}(s))$ and let
$\Phi_{\RF}(s,\sigma)$ be its fundamental matrix.  By
\eqref{eq:rotating-two-sided}, after removing the universal factor
$e^{-5(s-\sigma)/2}$ the matrices $\Phi_{\RF}$ and
$\Phi_{\RF}^{-1}$ are uniformly bounded on $[s_0,s_b]$.
Write
\[
 V_{\PE}(s)=\Phi_{\RF}(s,s_0)Y(s),\qquad
 V_{\RF}(s)=\Phi_{\RF}(s,s_0)Y_0,
\]
where $Y_0=V_{\RF}(s_0)$, and set $Z=Y-Y_0$.
Then
\begin{equation}\label{eq:Y-difference}
 Z'=\Phi_{\RF}(s_0,s)\,\Delta A(s)\,
     \Phi_{\RF}(s,s_0)(Y_0+Z),
\end{equation}
where $\Delta A$ is the difference between the positive-Einstein odd
coefficient matrix and $A_{\RF}$.

The absolute comparison \eqref{eq:pointwise} gives
\[
 |U_{e,\PE}(s)-U_{e,\RF}(s)|\le Cb^2e^{2s}.
\]
Using smoothness, \eqref{eq:Ao-odd-derivative}, and the bootstrap
$|Z|\le \frac12|Y_0|$, we therefore obtain
\begin{equation}\label{eq:DeltaA-bound}
 \|\Delta A(s)\|
 \le Cb^2e^{2s}
     +C|V_{\RF}(s)|^2\frac{|Z(s)|}{|Y_0|}.
\end{equation}
Indeed the first term contains both the change of the even background and
the explicit positive-Einstein term in \eqref{eq:odd-homogeneous-PE}; the
second term is the change of the even coefficient matrix caused by replacing
$V_{\RF}$ by $V_{\PE}$.

The two-sided estimate \eqref{eq:rotating-two-sided} implies
\[
 \int_{s_0}^{s_b}|V_{\RF}(s)|^2\,ds\le C,
\]
uniformly in $b$ and $|\mu|\le\mu_0$.  Dividing
\eqref{eq:Y-difference} by $|Y_0|$ and using
\eqref{eq:DeltaA-bound} gives
\[
 r(s)
 \le Cb^2
   +C\int_{s_0}^s b^2e^{2\sigma}\,d\sigma
   +C\int_{s_0}^s |V_{\RF}(\sigma)|^2r(\sigma)\,d\sigma,
 \qquad r=\frac{|Z|}{|Y_0|}.
\]
Gronwall now involves only an integrable coefficient.  Since
\[
 \int_{s_0}^{s_b}b^2e^{2\sigma}\,d\sigma\le C\delta_b^2,
\]
we obtain
\[
 r(s)\le C\delta_b^2,
 \qquad s_0\le s\le s_b.
\]
For $b$ small this closes the bootstrap.  Returning through
$\Phi_{\RF}$ and using \eqref{eq:rotating-two-sided} yields
\[
 |V_{\PE}(s)-V_{\RF}(s)|
 \le C\delta_b^2|V_{\RF}(s)|,
 \qquad s_0\le s\le s_b,
\]
and in particular \eqref{eq:weighted-inner} at $s=s_b$.
\end{proof}

For later use set
\[
 \tau_b:=\frac{\delta_b}{b}=b^{-1/2};
\]
this is the Ricci-flat radial size of the overlap point.

Changing the Ricci-flat focal parameter from $c_*$ to $c(b,\mu)$ changes
the stable coordinates at the fixed entry section by $O(\rho(b))$ and the
unstable shape coordinate by exactly $\mu b^{\beta_+}$.  Stable contributions
are integrable in the rotating frame, while the growing unstable contribution
has integral $O(\delta_b^{\beta_+})$.  Lemma~\ref{lem:weighted-inner} gives
the relative positive-Einstein error.  Hence, for some $\eta_*>0$, the complex
rotating odd amplitude at the overlap differs from the threshold amplitude by
\begin{equation}\label{eq:overlapodd}
 O\!\left(\rho(b)+\tau_b^{-\eta_*}
          +\delta_b^{\beta_+}+\delta_b^2\right)
\end{equation}
relatively, uniformly for bounded $\mu$.

\section{Uniform outer matching}

At the overlap scale the positive-Einstein solution carries the same odd
phase as the Ricci-flat model up to a vanishing relative error, while the
even detuning has the prescribed leading size.  These two pieces are then
propagated through a fixed outer corridor to the parameter-dependent first
\(H=0\) section.

The outer problem is regular.  Its role is therefore not to create the
oscillation, but to \emph{read} it.  The even mode is converted into the
second detector component \(F_2\); the odd mode is converted into \(F_1\).
Uniformity in the bounded detuning parameter is essential, because the final
existence argument uses whole parameter rectangles rather than a single
trajectory.

At the overlap the inner analysis has produced an even coefficient close to
$\mu$ and a nonzero odd coefficient carrying the phase
$\omega\log(1/b)$.  We now transport both to the first $H=0$ section.  We
keep the overlap
\[
 \delta_b=b^{1/2},\qquad \tau_b=b^{-1/2}.
\]
Hyperbolicity of $p_2$ sharpens \eqref{eq:Q}: for some spectral-gap number
$\eta>0$,
\begin{equation}\label{eq:Qrate}
 z_*(\tau)
 =\Re\!\left(Q\tau^{-5/2+i\omega}\right)
 +O(\tau^{-5/2-\eta}).
\end{equation}
The same estimate, together with the rotating-frame bounds of the preceding
section, holds uniformly for the detuned Ricci-flat and positive-Einstein
families.

The even regular coefficient requires a separate normalization.  Let
\(\xi_e\) be the unstable even coordinate in the \(p_2\) chart, normalized so
that the corresponding outer Frobenius mode has leading behavior
\(t^{\beta_+}\) as \(t\downarrow0\).  The inner
normal form and \eqref{eq:detune} give
\[
 \xi_{e,\RF}(s_b)
 =\mu\delta_b^{\beta_+}
  \left(1+O(\tau_b^{-\eta_*})+O(\delta_b^{\beta_+})\right).
\]
The positive-Einstein comparison \eqref{eq:pointwise} changes this by
$O(\delta_b^2)$.  Hence there is a regular outer coefficient
$\widehat\mu=\widehat\mu(b,\mu)$ such that
\begin{equation}\label{eq:muhat}
 \widehat\mu
 =\mu+O(\kappa_b),\qquad
 \kappa_b=
 \rho(b)+\tau_b^{-\eta_*}+\delta_b^{\beta_+}
 +\delta_b^{\,2-\beta_+}.
\end{equation}
The exponent $2-\beta_+$ is positive.  Thus the exact even overlap state is
matched to the symmetric regular-singular family with coefficient
$\widehat\mu$, and $\widehat\mu\to\mu$ uniformly for bounded $\mu$.

Consequently the complex odd amplitude at the overlap can be written
\begin{equation}\label{eq:inner-overlap-complex}
 q_b
 =Q\tau_b^{-5/2+i\omega}
 \left(1+O(\tau_b^{-\eta})+O(\kappa_b)\right),
\end{equation}
uniformly for $|\mu|\le\mu_0$.  Here and below a complex amplitude denotes the
coefficient whose real part gives the real odd solution.

Fix once and for all a small regular time $t_*>0$.  For each sufficiently
small parameter $\nu$, regular-singular theory at the $p_2$ sine cone gives a
symmetric outer solution $G_\nu$ with
\[
 y_\nu(t)
 =\nu t^{\beta_+}\bigl(1+O(t^\sigma)\bigr)
  +O(\nu^2t^{2\beta_+})
 \qquad (0<t\le t_*),
\]
for some $\sigma>0$, uniformly for bounded $\nu$.
Starting from $t_*$, ordinary regular ODE dependence propagates these
solutions through a fixed compact positive corridor containing
\(t=T_0=\pi/2\) for the sine cone.  After shrinking \(\mu_0\), every
\(|\nu|\le2\mu_0\) has its own unique first \(H=0\) event \(T_\nu\) in this
corridor: existence follows from closeness to the sine cone, and uniqueness
and uniform transversality follow from
\eqref{eq:Hmono}.  Define the even detector on this moving section by
\[
 \mathcal E(\nu)=L_3(G_\nu(T_\nu)).
\]
Here \(T_0=\pi/2\) only for the sine cone.  Uniform transversality and smooth
dependence give
\[
 T_\nu=T_0+O(\nu),
\]
as well as smoothness of \(\mathcal E\).  At $\nu=0$
the background has $y\equiv0$, so the first variation of the moving event
does not contribute to $y'$.  Since on the symmetric $H=0$ section
$L_3=\frac23y'$, the even calculation of Section~4.2 gives
\begin{equation}\label{eq:E}
 \mathcal E(\nu)=C_e\nu+O(\nu^2),\qquad C_e>0.
\end{equation}
In the matching argument below the symmetric background is
$G_{\widehat\mu}$.  By \eqref{eq:muhat} and smoothness,
\begin{equation}\label{eq:E-muhat}
 \mathcal E(\widehat\mu)=\mathcal E(\mu)+O(\kappa_b).
\end{equation}

We next normalize the odd outer solution.  Let $\Psi_{o,\nu}$ be the complex
solution of the linear odd equation along $G_\nu$ satisfying
\begin{equation}\label{eq:outer-Frobenius}
 \Psi_{o,\nu}(t)
 =t^{-5/2+i\omega}\bigl(1+O(t^{\sigma})\bigr)
 \qquad(t\downarrow0)
\end{equation}
for some $\sigma>0$, uniformly for $|\nu|\le2\mu_0$.  Parameter-dependent
regular-singular theory makes $\Psi_{o,\nu}$ continuous in $\nu$ up to
$t_*$, and ordinary linear ODE dependence propagates it continuously through
the fixed regular corridor.  Let $\mathcal D_\nu$ denote the linearized
$F_1$ detector at the transverse $H=0$ event of $G_\nu$, and set
\begin{equation}\label{eq:Co-def}
 C_o(\nu)=\mathcal D_\nu(\Psi_{o,\nu}).
\end{equation}
Then $C_o$ is $C^1$ (indeed real analytic) on the fixed compact detuning interval, hence uniformly Lipschitz.  In particular,
\begin{equation}\label{eq:Co-muhat}
 C_o(\widehat\mu)=C_o(\mu)+O(\kappa_b).
\end{equation}
At $\nu=0$ the background is the sine cone and
Section~4.1 shows \(\Psi_o(T_0)\ne0\) at the sine-cone event
\(T_0=\pi/2\).  An odd first variation does not move
the $H=0$ event to first order because $H$ is Weyl-even.  Since
$F_1=f_1-f_2=f_0z+O(z^2)$ there with $f_0>0$, this is precisely
\begin{equation}\label{eq:Co-nonzero}
 C_o(0)\ne0.
\end{equation}

The next estimate is the outer analogue of the parity-weighted inner
comparison.  Its key gain is quadratic dependence of the even background on
the odd variable: a first-order odd perturbation cannot feed directly into
the even equations.

\begin{lemma}[Weighted outer parity control]\label{lem:outer-parity}
Let $E$ denote the Weyl-even weighted outer state of the exact matched
solution and $E_{\widehat\mu}$ the corresponding state of the symmetric outer
solution $G_{\widehat\mu}$.  Put
\[
 A=b^{5/2},\qquad
 \vartheta_b=
 \kappa_b+\delta_b^\sigma+\delta_b^2
 +\left(\frac b{\delta_b}\right)^5.
\]
For $t\in[\delta_b,t_*]$,
\begin{equation}\label{eq:weighted-even-state}
 \|E(t)-E_{\widehat\mu}(t)\|_{\mathrm w}\le C\vartheta_b,
\end{equation}
and, writing the exact odd equation as
\[
 z''+H(E,z)z'=a(E,z)z,
\]
one has
\begin{align}
 |H(E,z)-H(E_{\widehat\mu},0)|
 &\le \frac{C}{t}\bigl(\vartheta_b+|z|^2\bigr),
 \label{eq:H-weighted-parity}\\
 |a(E,z)-a(E_{\widehat\mu},0)|
 &\le \frac{C}{t^2}\bigl(\vartheta_b+|z|^2\bigr).
 \label{eq:a-weighted-parity}
\end{align}
\end{lemma}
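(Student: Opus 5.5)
The plan is to prove \eqref{eq:weighted-even-state} first, by a weighted variation-of-constants argument on the outer interval \([\delta_b,t_*]\). The coefficient estimates \eqref{eq:H-weighted-parity}--\eqref{eq:a-weighted-parity} will then follow by Taylor expansion in the even variables together with Weyl parity in the odd one. I will work in weighted even variables adapted to the regular-singular structure at the sine-cone end: the shape variables \(y\), \(ty'\) (and similarly \(tL_i\), \(tH\)), rescaled so that the linearization of the even system along \(G_{\widehat\mu}\) becomes an Euler-type system in \(\log t\). Its exponents are \(\beta_\pm\), the radial mode being reconstructed from the Hamiltonian constraint. The norm \(\|\cdot\|_{\mathrm w}\) is the one in which this linearization has a uniform exponential dichotomy on \([\delta_b,t_*]\): one growing mode \(t^{\beta_+}\), and modes decaying like \(t^{\beta_-}\) or faster.

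\textbf{Step 1 (initial discrepancy at \(t=\delta_b\)).} At the overlap I compare the exact even state with \(E_{\widehat\mu}(\delta_b)\). Lemma~\ref{lem:absolute-inner} and the definition \eqref{eq:muhat} of \(\widehat\mu\) match the unstable coefficient exactly, by construction of \(\widehat\mu\). The stable components are controlled by the inner shadowing error \(O(\delta_b^2)\) and by the convergence of the inner solution to \(p_2\) at rate \(\tau_b^{-\eta_*}\) (absorbed into \(\kappa_b\)). The discrepancy between the Ricci-flat cone and the sine cone at scale \(\delta_b\) is \(O(\delta_b^\sigma)\), coming from the \(O(t^\sigma)\) corrections in the Frobenius expansions. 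Finally, the odd-squared feedback at the overlap is \(|z(\delta_b)|^2\sim\tau_b^{-5}=(b/\delta_b)^5\), by \eqref{eq:inner-overlap-complex}. Together these give an initial weighted error \(\le C\vartheta_b\).

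\textbf{Step 2 (propagation).} Let \(D=E-E_{\widehat\mu}\). Because \(E\) is Weyl-even, the full even equation depends on the odd variables only through even functions of \((z,z')\). Hadamard's formula, as in Lemma~\ref{lem:Q}, therefore writes this dependence as a quadratic form in \((z,tz')\) with smooth coefficients. This gives
\[
 \frac{dD}{d\log t}=\mathcal A(t)D+N(D)+O(t^2|z|^2+|tz'|^2),
\]
where \(\mathcal A\) is the dichotomic linearization and \(|N(D)|\le C|D|^2\). For the odd forcing I use the bootstrap bound \(|z(t)|+|tz'(t)|\le C\,b^{5/2}t^{-5/2}\). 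This is the outer analogue of \eqref{eq:rotating-two-sided}, with the initial size taken from \eqref{eq:inner-overlap-complex} and the relative control of Lemma~\ref{lem:weighted-inner}. The forcing is then \(\lesssim (b/t)^5\). I integrate it against the dichotomy kernels. Along stable blocks the kernel \(e^{-\gamma(\log t-\log\sigma)}\) gives a contribution \(\lesssim(b/\delta_b)^5\). Along the unstable block I integrate backwards from \(t_*\). This does not fix the unstable component at \(\delta_b\), but that component has already been prescribed by \(\widehat\mu\); the mismatch is \(O(\kappa_b)\) and is amplified only by the bounded factor \((t_*/\delta_b)^{\beta_+}\delta_b^{\beta_+}\lesssim1\), because I measure it in the \(\nu\)-normalized coordinate. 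A contraction in the weighted sup norm, as in Lemma~\ref{lem:absolute-inner}, then closes the estimate for \(D\). Within the same bootstrap I re-derive the odd bound from the linear odd equation with the now-controlled coefficients: Gronwall with integrable error \(\int(\vartheta_b+|z|^2)\,d\log t\) preserves the \(t^{-5/2}\) envelope.

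\textbf{Step 3 (coefficients).} In \eqref{eq:oddexact} the coefficient \(a(E,z)=\sinh z\,[\dots]/z\) is even in \(z\) and smooth in \(E\). After scaling by \(t^2\), its \(E\)-derivative is bounded in the weighted chart. Hence \(|a(E,z)-a(E_{\widehat\mu},0)|\le Ct^{-2}(\|D\|_{\mathrm w}+|z|^2)\). The same argument for \(H\), which is Weyl-even, with weight \(t^{-1}\) gives \eqref{eq:H-weighted-parity}.

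The main obstacle is Step 2, specifically coupling the even estimate to the odd bootstrap without losing a factor of \(\log(t_*/\delta_b)\) from the unstable direction. The way to avoid this is to treat the unstable coordinate by its prescribed value at \(\delta_b\) rather than as a free variable. If this does not work, I would absorb the resulting logarithmic loss into the power rate \eqref{eq:rho-power-rate}.
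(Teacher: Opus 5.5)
Your proposal follows essentially the paper's proof: variation of constants for $E-E_{\widehat\mu}$ on $[\delta_b,t_*]$ with a stable Green kernel bounded uniformly in the lower endpoint, the regular $t^{\beta_+}$ coefficient fixed at the overlap through $\widehat\mu$, Weyl parity making the odd forcing quadratic so that $\int_{\delta_b}^{t}|V|^2\,d\xi/\xi\lesssim(b/\delta_b)^5$, and Taylor expansion of $tH$ and of the $z$-even coefficient $t^2a$.

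Two details should be corrected. First, besides $t^{\beta_\pm}$ the even system keeps the stable trace mode of $q=tH-6$, with rate $t^{-1}$; the Hamiltonian constraint reconstructs only the common scale. So not all stable modes decay like $t^{\beta_-}$ or faster, and it is this slowest rate that gives the paper's kernel exponent $\gamma=1$. Second, the paper does not integrate the unstable coordinate backward from $t_*$; it simply keeps that coordinate prescribed at $\delta_b$ by $\widehat\mu$. Your claim that the resulting mismatch is $O(\kappa_b)$ also overlooks that the odd forcing itself feeds this coordinate by $O(\delta_b^{-\beta_+}(b/\delta_b)^5)$ at $t_*$. This is harmless only because it is still $o(\kappa_b)$ for $\delta_b=b^{1/2}$.
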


\begin{proof}
Use the weighted scale variables
\[
 x_e=\frac{\sqrt{f_1f_2}}{t},\qquad y_e=\frac{f_3}{t},
\]
together with the corresponding weighted even logarithmic derivatives and
the odd first-order vector $V$.  The Einstein equations are regular-singular
in these variables:
\[
 tE'=F_e(E,V),\qquad tV'=F_o(E,V).
\]
Weyl equivariance gives
\[
 F_e(E,-V)=F_e(E,V),\qquad F_o(E,-V)=-F_o(E,V),
\]
and therefore $D_VF_e(E,0)=0$.  The reference solution
$G_{\widehat\mu}$ lies on the regular Frobenius branch.  The exact matched
solution retains a small stable even tail inherited from the inner problem;
at $t=\delta_b$ this tail is $O(\tau_b^{-\eta_*})$ and is already included
in $\kappa_b$.  After the regular coefficient is matched by
\eqref{eq:muhat}, only the complementary weighted even modes remain.  The
uniformity as $\delta_b\downarrow0$ can be proved directly from the
indicial splitting.  The genuine even \emph{shape} variable (distinct from the weighted scale
\(y_e\) above) is
\[
 y=\log\frac{f_3}{\sqrt2\,\sqrt{f_1f_2}}.
\]
At the \(p_2\) sine cone its linearized model equation is
\[
 y''+6\cot t\,y'-5\csc^2t\,y=0.
\]
For the nearby backgrounds \(G_\nu\) the coefficients are perturbed; only
their common regular-singular leading part at \(t=0\) is used to obtain the
same indicial splitting uniformly in \(\nu\).  The model indicial polynomial at $t=0$ is
\[
 \lambda(\lambda-1)+6\lambda-5
 =\lambda^2+5\lambda-5,
\]
with roots
\[
 \beta_+=\frac{-5+3\sqrt5}{2}>0,
 \qquad
 \beta_-=\frac{-5-3\sqrt5}{2}<0.
\]
Thus the gap between the regular and complementary shape modes is the fixed
number
\[
 \beta_+-\beta_-=3\sqrt5.
\]

For the nearby symmetric backgrounds $G_\nu$, $|\nu|\le2\mu_0$, put
$X=(y,ty')^T$.  After decreasing $t_*$ if necessary, the linearized weighted
shape equation has the form
\begin{equation}\label{eq:even-rs-system}
 tX'=\bigl(A_0+t^\sigma A_\nu(t)\bigr)X+F,
 \qquad
 A_0=
 \begin{pmatrix}
 0&1\\ 5&-5
 \end{pmatrix},
\end{equation}
where $A_\nu$ is uniformly bounded in both variables.  The eigenvalues of
$A_0$ are precisely $\beta_+$ and $\beta_-$.  Passing to logarithmic time
\(s=\log t\) as an auxiliary logarithmic independent variable, and
conjugating by the fixed eigenvector matrix gives
\[
 Z'=
 \left[
 \begin{pmatrix}\beta_+&0\\0&\beta_-\end{pmatrix}
 +R_\nu(s)
 \right]Z+\widetilde F,
 \qquad
 \|R_\nu(s)\|\le Ce^{\sigma s}.
\]
Moreover
\[
 \sup_{|\nu|\le2\mu_0}
 \int_{-\infty}^{\log t_*}\|R_\nu(s)\|\,ds
 \le \frac{C}{\sigma}t_*^\sigma.
\]
Choose $t_*$ so that this quantity is smaller than one quarter of the spectral
gap.  The Volterra equations for the two eigendirections are then contractions
in the norms $e^{-\beta_\pm s}|Z_\pm(s)|$.  They give fundamental solutions,
uniformly for $|\nu|\le2\mu_0$,
\[
 u_{+,\nu}(t)=t^{\beta_+}(1+O(t^\sigma)),
 \qquad
 u_{-,\nu}(t)=t^{\beta_-}(1+O(t^\sigma)),
\]
with the same estimates after one weighted derivative.  Their Wronskian
therefore satisfies
\[
 W_\nu(t)
 =(\beta_--\beta_+)
  t^{\beta_++\beta_--1}(1+O(t^\sigma))
 =-3\sqrt5\,t^{-6}(1+O(t^\sigma)).
\]
Thus $t^6W_\nu(t)$ is bounded away from zero uniformly in $\nu$.

The preceding calculation controls the genuine even \emph{shape} mode.  We
also record explicitly why it controls the full reduced even state used below.
Separate the weighted trace variable
\[
 q=tH-6
\]
from the traceless even shape variables.  Put
\[
 tL_i=1+\ell_i.
\]
Then, identically,
\[
 q=2(\ell_1+\ell_2+\ell_3).
\]
Raychaudhuri's equation gives the exact weighted trace identity
\begin{equation}\label{eq:q-exact}
 t q'
 =
 -q-2\sum_{i=1}^3\ell_i^2-6t^2.
\end{equation}
Indeed,
\[
 tq'=tH+t^2H'
 =(q+6)-2\sum_i(1+\ell_i)^2-6t^2,
\]
which is exactly \eqref{eq:q-exact}.  After subtracting the symmetric
reference solution the final term cancels.

The genuine even shape variable is
\[
 y=\log\frac{f_3}{\sqrt2\,\sqrt{f_1f_2}},
\]
so
\[
 ty'=\ell_3-\frac{\ell_1+\ell_2}{2}.
\]
Consequently the even logarithmic derivatives are reconstructed exactly by
\begin{equation}\label{eq:ell-reconstruct}
 \frac{\ell_1+\ell_2}{2}
 =\frac q6-\frac13ty',
 \qquad
 \ell_3=\frac q6+\frac23ty'.
\end{equation}
The remaining difference \(\ell_1-\ell_2\) is Weyl-odd.  This, after
subtracting two nearby solutions, the quadratic term in \eqref{eq:q-exact}
has the structure
\[
 \Delta\!\left(\sum_i\ell_i^2\right)
 =
 O(t^{\sigma_0})\Delta q
 +O(t^{\sigma_0})|X_{\rm sh}|
 +O(|V|^2),
\]
uniformly on the bounded detuning interval, where
\(X_{\rm sh}=(y,ty')\) and one may take
\(\sigma_0=\min\{\beta_+,\sigma,2\}>0\).
Hence
\begin{equation}\label{eq:radial-weighted}
 t\,\Delta q'
 =
 -\Delta q
 +O(t^{\sigma_0})\Delta q
 +O(t^{\sigma_0})|X_{\rm sh}|
 +O(|V|^2).
\end{equation}
The coefficient \(-1\) is therefore the exact radial trace indicial exponent.
The free trace mode is \(t^{-1}\) and is absent on the regular branch; the
forced trace Green kernel is bounded by \(C(\xi/t)\).

It remains only to reconstruct the common weighted orbit scale.  The master
constraint \eqref{eq:master-hamiltonian} is equivalently
\[
 H^2-|L|^2=\operatorname{Scal}(g_t)-30.
\]
As a direct check, on the \(p_2\) sine cone one has
\(H=6\cot t\), \(|L|^2=6\cot^2t\), and
\(\operatorname{Scal}(g_t)=30\csc^2t\), so
\eqref{eq:master-hamiltonian} is identically satisfied.  Since every isotropy
summand has real dimension two, the weighted form is
\begin{equation}\label{eq:weighted-Hamiltonian}
 (tH)^2-2\sum_{i=1}^3(tL_i)^2
 -t^2\operatorname{Scal}(g_t)+30t^2=0.
\end{equation}
Write the weighted orbit factors as \(x_i=f_i/t\).  At the \(p_2\) cone
their ratios are fixed and
\[
 t^2\operatorname{Scal}(g_t)=30.
\]
Under a common logarithmic rescaling \(x_i\mapsto e^r x_i\), the weighted
orbit scalar curvature changes by the factor \(e^{-2r}\).  The remaining
terms in \eqref{eq:weighted-Hamiltonian} are unchanged under this common scale
variation at fixed weighted logarithmic derivatives.  Since the scalar term
enters with a minus sign, the derivative of the left-hand side with respect
to \(r\) at the cone is
\[
 \frac{d}{dr}\bigl(-30e^{-2r}\bigr)\Big|_{r=0}=60\neq0.
\]
The implicit-function theorem consequently reconstructs the common radial
scale as a uniformly smooth function of \(q\), the even shape variables, and
the odd variables in a fixed cone neighborhood.  Thus the full weighted even
state is controlled by the trace mode together with \(X_{\rm sh}\).

At this point the differential-equation conventions are fixed by
\eqref{eq:master-ricci}--\eqref{eq:master-hamiltonian}.  In particular, the
positive-Einstein equations, their Ricci-flat blow-up, the odd difference
equation, the weighted trace equation, and the common-scale reconstruction
all use the same signs.

After the coefficient of the sole regular shape mode \(t^{\beta_+}\) has
been absorbed into \(\widehat\mu\), the complementary full even state has only
the two singular indicial rates
\[
 \beta_-<0,\qquad -1<0.
\]
Both are excluded as free data by regularity, and their forced kernels are
uniform on every shrinking interval.  Combining the shape kernel below with
the trace kernel gives
\[
 \|K_\nu(t,\xi)\|
 \le C\left(\frac{\xi}{t}\right)^\gamma,
 \qquad
 \gamma=\min\{1,-\beta_-\}=1,
\]
with \(C\) independent of the lower endpoint.

The coefficient of the regular mode $u_{+,\nu}$ has already been absorbed
into $\widehat\mu$.  On the complementary shape mode we have, for
$0<\xi\le t\le t_*$,
\[
 \left|
 \frac{u_{-,\nu}(t)}{u_{-,\nu}(\xi)}
 \right|
 \le C\left(\frac{\xi}{t}\right)^{-\beta_-}.
\]
The variation-of-constants formula together with the uniform Wronskian bound
therefore yields the complementary weighted Green kernel estimate
\begin{equation}\label{eq:uniform-even-green}
 \|K_\nu(t,\xi)\|
 \le C\left(\frac{\xi}{t}\right)^\gamma,
 \qquad
 \gamma=\min\{1,-\beta_-\}=1,
 \qquad
 0<\xi\le t\le t_*,
\end{equation}
with $C$ independent of $\nu$ and, crucially, independent of the lower
endpoint.  Hence
\[
 \sup_{0<\delta<t_*}
 \sup_{t\in[\delta,t_*]}
 \int_\delta^t
 \|K_\nu(t,\xi)\|\,\frac{d\xi}{\xi}
 \le \frac C\gamma.
\]
The complementary weighted Green operator is therefore uniformly bounded on
every shrinking interval $[\delta_b,t_*]$.  This proves that the controlled
stable tail at $t=\delta_b$ is propagated with no loss as
$\delta_b\downarrow0$.  Variation of constants gives
\[
 \|E(t)-E_{\widehat\mu}(t)\|_{\mathrm w}
 \le C\left(
 \kappa_b+\delta_b^\sigma+\delta_b^2
 +\int_{\delta_b}^{t}|V(\xi)|^2\,\frac{d\xi}{\xi}\right).
\]
The odd Frobenius bound $|V(\xi)|\le CA\xi^{-5/2}$ yields
\[
 \int_{\delta_b}^{t}|V(\xi)|^2\,\frac{d\xi}{\xi}
 \le CA^2\delta_b^{-5}
 =C\left(\frac b{\delta_b}\right)^5,
\]
which proves \eqref{eq:weighted-even-state}.

The exact factorization \eqref{eq:oddexact}, with
$m=\sqrt{f_1f_2}=tx$ and $f_3=ty$, gives
\[
 a(E,z)
 =\frac1{t^2}\frac{\sinh z}{z}
   \frac{2x^2\cosh z-3y^2}{x^2y^2}.
\]
Thus $t^2a$ is smooth in the weighted even variables and is even in $z$.
Likewise $tH$ is a smooth weighted even variable.  Taylor's theorem,
\eqref{eq:weighted-even-state}, and the vanishing of the first $z$-derivative
at $z=0$ give
\eqref{eq:H-weighted-parity}--\eqref{eq:a-weighted-parity}.
\end{proof}

We can now state the central matching result.  The second detector converges
to the nonoscillating even response $\mathcal E(\mu)$.  After removing its
natural size $b^{5/2}$, the first detector converges to a sinusoid whose phase
is $\omega\log(1/b)$ and whose amplitude is nonzero because $Q\ne0$.

\begin{lemma}[Uniform inner--outer matching]\label{lem:matching}
There is a function $\varepsilon(b)\to0$ such that, uniformly for
$|\mu|\le\mu_0$,
\begin{align}
 F_2(b,\mu)&=\mathcal E(\mu)+O(\varepsilon(b)),\label{eq:F2}\\
 b^{-5/2}F_1(b,\mu)
 &=\Re\!\left(C_o(\mu)Qe^{i\omega\log(1/b)}\right)
   +O(\varepsilon(b)).\label{eq:F1}
\end{align}
With $\delta_b=b^{1/2}$ one may take
\begin{equation}\label{eq:rho-rate}
 \varepsilon(b)
 =C\left[
   \kappa_b+\delta_b^\sigma+\delta_b^2
   +\left(\frac b{\delta_b}\right)^5
 \right](1+|\log\delta_b|).
\end{equation}
In particular \(\varepsilon(b)\to0\) as \(b\downarrow0\).  Indeed,
\(\beta_+=(-5+3\sqrt5)/2\in(0,2)\),
\(\rho(b)=O(b^{\beta_+/m})\), and with
\(\delta_b=b^{1/2}\), \(\tau_b=b^{-1/2}\), every power occurring in
\(\kappa_b\) and \((b/\delta_b)^5\) is strictly positive; multiplication by
\(1+|\log\delta_b|\) therefore still tends to zero.
\end{lemma}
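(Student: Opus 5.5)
The plan is to propagate the overlap data from $t=\delta_b$ to the moving first $H=0$ event in two stages. First comes the regular-singular corridor $[\delta_b,t_*]$, where Lemma~\ref{lem:outer-parity} applies. Then comes the fixed regular corridor $[t_*,T]$, where ordinary continuous dependence applies. The even channel and the odd channel are treated separately, as in the inner analysis.

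\textbf{Even channel.} By \eqref{eq:weighted-even-state}, the exact even state at $t_*$ differs from that of $G_{\widehat\mu}$ by $O(\vartheta_b)$. The odd part is only $O(b^{5/2}t_*^{-5/2})$, and it enters the even equations quadratically. Regular dependence on the compact corridor, together with uniform transversality of the $H=0$ event from \eqref{eq:Hmono}, then gives $F_2(b,\mu)=\mathcal E(\widehat\mu)+O(\vartheta_b)$. Combining this with \eqref{eq:E-muhat} yields \eqref{eq:F2}.

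\textbf{Odd channel on $[\delta_b,t_*]$.} Write the exact odd equation $z''+H(E,z)z'=a(E,z)z$ as a perturbation of the linear odd equation along $G_{\widehat\mu}$. By \eqref{eq:H-weighted-parity}--\eqref{eq:a-weighted-parity}, the coefficient errors are $O(t^{-1}(\vartheta_b+|z|^2))$ and $O(t^{-2}(\vartheta_b+|z|^2))$. Pass to $s=\log t$ and to the rotating frame $\widetilde V=e^{5s/2}e^{-\omega Js}V$, built from the outer Frobenius basis $\Psi_{o,\widehat\mu},\overline{\Psi}_{o,\widehat\mu}$ of \eqref{eq:outer-Frobenius}. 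In this frame the perturbation is a coefficient matrix of size $O(\vartheta_b+|z|^2)$. Two facts control it. The factor $|z|^2\le Cb^5t^{-5}$ is integrable in $ds$, with integral $O((b/\delta_b)^5)$. The $\vartheta_b$ part has $ds$-length $|\log\delta_b|$. A Gronwall argument relative to the signal, with no additive forcing because the odd equation is homogeneous, then shows the following. The complex odd coefficient with respect to $\Psi_{o,\widehat\mu}$ is constant up to a relative error $O(\vartheta_b(1+|\log\delta_b|))$. This is exactly where the logarithmic factor in \eqref{eq:rho-rate} comes from.

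\textbf{Matching and the odd detector.} At $t=\delta_b$, the odd coefficient is read off from \eqref{eq:inner-overlap-complex}. In physical variables $z(\delta_b)\approx\Re(Q\tau_b^{-5/2+i\omega})$ and $\tau_b=\delta_b/b$, so the coefficient of $t^{-5/2+i\omega}$ is $Qb^{5/2}b^{-i\omega}(1+O(\kappa_b))$, which equals $b^{5/2}Qe^{i\omega\log(1/b)}$. The required identification of inner rotating amplitude with outer Frobenius amplitude uses that both are normalized at the same cone with leading term $t^{-5/2+i\omega}$. The $O(t^\sigma)$ correction in \eqref{eq:outer-Frobenius} contributes only $O(\delta_b^\sigma)$. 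On $[t_*,T]$ the odd solution is $O(b^{5/2})$, and its nonlinear corrections are $O(b^5)$. Since $H$ is Weyl-even, odd data do not move the event to first order, and the event moves by $O(\vartheta_b)$ through the even channel. Hence $F_1=\mathcal D_{\widehat\mu}(z)+O(b^5)$, which gives $b^{-5/2}F_1=\Re(C_o(\widehat\mu)Qe^{i\omega\log(1/b)})+O(\varepsilon(b))$. Finally \eqref{eq:Co-muhat} replaces $\widehat\mu$ by $\mu$.

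\textbf{Main obstacle.} The hardest step is the relative odd transport across the shrinking corridor $[\delta_b,t_*]$. One must check that the rotating-frame fundamental matrix of the linear odd equation along $G_\nu$ is bounded uniformly in $\nu$ and in the lower endpoint. I would first try to obtain this from the $O(t^\sigma)$ regular-singular remainders, exactly as in the even Green-kernel argument. One must also check that the $\vartheta_b$ coefficient error costs only the factor $|\log\delta_b|$ and not exponential growth. This holds because the rotation is unitary and the decay $e^{-5s/2}$ is factored out.
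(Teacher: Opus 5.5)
Your proposal is correct and follows essentially the paper's route: the same exact scale-and-phase cancellation at the overlap, $\delta_b^{5/2-i\omega}\tau_b^{-5/2+i\omega}=b^{5/2}e^{i\omega\log(1/b)}$; the same use of the parity-weighted coefficient errors from Lemma~\ref{lem:outer-parity}, where your relative Gronwall in the Frobenius frame is just the homogeneous form of the paper's variation of constants against the $t^{7/2}$-weighted odd Green kernel, producing the same $\vartheta_b(1+|\log\delta_b|)$ and $(b/\delta_b)^5$ terms; and then the same regular-corridor propagation, transverse event shift, and $\widehat\mu\to\mu$ conversion.

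One bookkeeping slip needs fixing. On $[t_*,T]$ the event shift and the even-background mismatch contribute $O(b^{5/2}\vartheta_b)$ to $F_1$, not $O(b^5)$. This is still absorbed in $\varepsilon(b)$ only because the $F_1$ detector is odd, so it factors as $\mathscr D(E,V)=\mathscr M(E,V)V$ as in the paper, and that factorization supplies the factor $b^{5/2}$ on every even error.
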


\begin{proof}
\emph{Step 1: match the odd coefficient at the overlap.}
At $t=\delta_b$, use the convention that a real odd field with complex
coefficient $A_b$ is written as $\Re(A_b\Psi_{o,\widehat\mu})$, and match
the inner odd field in this convention.  The weighted inner estimate and
\eqref{eq:outer-Frobenius} give
\[
 A_b
 =\delta_b^{5/2-i\omega}Q\tau_b^{-5/2+i\omega}
  \left(1+O(\tau_b^{-\eta_*})+O(\kappa_b)+O(\delta_b^\sigma)\right).
\]
The scale and phase cancellation is exact:
\begin{align*}
 \delta_b^{5/2-i\omega}\tau_b^{-5/2+i\omega}
 &=\delta_b^{5/2-i\omega}
   \left(\frac{\delta_b}{b}\right)^{-5/2+i\omega}\\
 &=b^{5/2-i\omega}
  =b^{5/2}e^{i\omega\log(1/b)}.
\end{align*}
Thus the arbitrary overlap scale disappears from the leading term.  This is
the source of the full $\omega\log(1/b)$ phase.

\emph{Step 2: propagate from the overlap to a fixed outer section.}
Put $A=b^{5/2}$.  On the outer matching corridor,
\[
 |z(t)|\le CA t^{-5/2},\qquad |z'(t)|\le CA t^{-7/2}.
\]
Lemma~\ref{lem:outer-parity} gives
\[
 L_{\widehat\mu} z
 =
 -\bigl(H(E,z)-H(E_{\widehat\mu},0)\bigr)z'
 +\bigl(a(E,z)-a(E_{\widehat\mu},0)\bigr)z,
\]
and therefore
\begin{equation}\label{eq:outer-odd-remainder}
 |L_{\widehat\mu} z(t)|
 \le C\vartheta_b A t^{-9/2}+CA^3t^{-19/2}.
\end{equation}
The first term is the weighted even-background mismatch acting linearly on
the odd field.  The second is the genuine odd nonlinearity and is cubic by
Weyl parity.

The operator \(L_{\widehat\mu}\) is the variable-coefficient odd
linearization about the nearby background \(G_{\widehat\mu}\), not the
sine-cone operator \eqref{eq:odd-sine}.  Regular-singular perturbation theory
gives for its Frobenius pair a Wronskian \(O(t^{-6})\), uniformly for bounded
\(\mu\), and the Green kernel from the singular end to a fixed outer point has
weight \(O(t^{7/2})\).  Thus
\begin{align*}
 A^{-1}\int_{\delta_b}^{t_*}
 O(t^{7/2})O(\vartheta_b A t^{-9/2})\,dt
 &\le C\vartheta_b(1+|\log\delta_b|),\\
 A^{-1}\int_{\delta_b}^{t_*}
 O(t^{7/2})O(A^3t^{-19/2})\,dt
 &\le CA^2\delta_b^{-5}
 =C\left(\frac b{\delta_b}\right)^5.
\end{align*}
Together with Lemma~\ref{lem:weighted-inner} and the overlap Frobenius
error, this proves the normalized odd estimate at the fixed regular section
$t=t_*$.  From $t_*$ to the compact corridor containing the $H=0$ events the
Einstein equation is a regular ODE.  Its flow map is uniformly $C^2$ on the
fixed positive neighborhood containing all $G_\nu$, $|\nu|\le2\mu_0$.
The regular flow and the fixed-section odd detector are Weyl equivariant.
Write the corridor state as $(E,V)$, with $E$ even and $V$ odd, and let
$\mathscr D(E,V)$ denote the value of $F_1$ at the fixed regular section.
Then
\[
 \mathscr D(E,-V)=-\mathscr D(E,V),\qquad \mathscr D(E,0)=0.
\]
On the fixed positive neighborhood the map $\mathscr D$ is uniformly $C^2$.
Hadamard's formula therefore factors it as
\[
 \mathscr D(E,V)=\mathscr M(E,V)V,
\]
with \(\mathscr M\) uniformly \(C^1\) and even in \(V\).  Since
\(\|E-E_{\widehat\mu}\|=O(\vartheta_b)\) and \(|V|=O(A)\) at the fixed
section, the change of the multiplier caused by the even background is
\(O(\vartheta_b)\), hence contributes \(O(A\vartheta_b)\).  Oddness gives no
constant term, and the non-linear odd correction is at least quadratic in the
small corridor state; it is therefore \(O(A^2)\) (in fact the pure odd
feedback is cubic).  Thus
\[
 \mathscr D(E,V)
 =\mathscr D(E_{\widehat\mu},V_{\rm lin})
  +O(A\vartheta_b)+O(A^2).
\]
Thus a pure even/background error never produces a naked
$O(\vartheta_b)$ contribution to $F_1$; Weyl parity supplies the essential
factor $A=b^{5/2}$.  After division by $A$ the corridor error is
$O(\vartheta_b)+O(A)$, and is absorbed by \eqref{eq:rho-rate}.  The same
regular propagation extends \eqref{eq:weighted-even-state} to the whole
corridor.  Consequently the fixed-corridor form of \eqref{eq:F1} holds with
the same $\varepsilon(b)$.

\emph{Step 3: move the detector to the first $H=0$ section.}
Let \(T_{\widehat\mu}\) be the first \(H=0\) event of
\(G_{\widehat\mu}\), and let \(T=T(b,\mu)\) be the corresponding event of the
exact solution.  These are moving event times, not fixed copies of
\(T_0=\pi/2\).  The propagated even estimate gives
\[
 |H_{b,\mu}-H_{\widehat\mu}|\le C\varepsilon(b)
\]
on a fixed neighborhood of $T_{\widehat\mu}$.  Since
\[
 H'=-2(L_1^2+L_2^2+L_3^2)-6\le-6,
\]
the event is uniformly transverse and
\begin{equation}\label{eq:event-shift}
 |T-T_{\widehat\mu}|\le C\varepsilon(b).
\end{equation}
On the same neighborhood, with \(F_1(t):=f_1(t)-f_2(t)\) evaluated along
the exact trajectory, outer propagation gives
\[
 |F_1(t)|+|\partial_tF_1(t)|\le CA.
\]
Hence
\[
 b^{-5/2}|F_1(T)-F_1(T_{\widehat\mu})|
 \le C\varepsilon(b).
\]
Thus the moving first-$H=0$ section preserves the normalized odd asymptotic.
For the even detector, $L_3$ and $L_3'$ are uniformly bounded there, so
\eqref{eq:event-shift} changes $F_2$ by $O(\varepsilon(b))$.  The symmetric
outer solution contributes $\mathcal E(\widehat\mu)$ to $F_2$, while the odd
contribution to the even variables is quadratic.  Equations
\eqref{eq:E-muhat} and \eqref{eq:Co-muhat} convert the two leading responses
back from $\widehat\mu$ to $\mu$, with an $O(\kappa_b)$ error already
contained in \eqref{eq:rho-rate}.  This proves \eqref{eq:F2} as well as
\eqref{eq:F1}.
\end{proof}

In \eqref{eq:rho-rate}, the displayed error groups collect seven decay
mechanisms: the threshold inverse modulus, the inner stable tail, unstable
detuning transport, positive-Einstein even normalization, the outer
Frobenius remainder, relative positive-Einstein odd shadowing, and
parity-controlled odd nonlinear feedback.  The logarithm is the borderline
outer Green loss.  No constant in these estimates depends on
\(\mu\in[-\mu_0,\mu_0]\): the inner constants come from one fixed cone chart,
the regular-singular outer constants are uniform on the compact
\(\nu\)-interval, and the final corridor is fixed and uniformly transverse.
Thus the error is genuinely uniform on the whole Poincar\'e--Miranda
rectangle, not merely along a sequence of detunings.  Since
\(\rho(b)=O(b^{\beta_+/m})\), all terms tend to zero for
\(\delta_b=b^{1/2}\).

\begin{lemma}[Positivity up to the reflection section]\label{lem:positivity}
After decreasing $b_0$ and $\mu_0$ if necessary, every solution used in
Lemma~\ref{lem:matching} is regular on the interval from the left focal orbit
to its first $H=0$ event and satisfies
\[
 f_i(t)>0\qquad (0<t\le T(b,\mu),\ i=1,2,3).
\]
\end{lemma}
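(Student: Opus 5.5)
The plan is to split the interval \([0,T(b,\mu)]\) into the same three regions used in the matching argument: the inner region \(0<t\le b e^{s_0}\), the overlap corridor \(be^{s_0}\le t\le t_*\), and the fixed outer corridor \(t_*\le t\le T(b,\mu)\). On each region we compare with a reference solution whose metric functions are known to be positive and bounded away from zero in the appropriate weighted sense. Closeness in those weighted variables then gives positivity. Regularity, meaning existence of the solution up to \(T\), follows at the same time: the Einstein equations \eqref{eq:master-einstein} are a regular ODE wherever all \(f_i>0\) and the \(L_i\) are bounded. A continuation argument therefore extends the solution as long as the weighted bounds persist.

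\emph{Inner region.} Near \(t=0\) the Eschenburg--Wang germ has \(f_1=t+O(t^3)\) and \(f_2,f_3=b+O(t)\). After blowing up by \(t=b\tau\), \(f_i=bF_i\), this becomes the focal germ of the Ricci-flat problem with Einstein constant \(6b^2\). On the fixed segment \(\tau\in(0,e^{s_0}]\), smooth dependence on \((c,b^2)\) shows that \(F_i\) is uniformly close to the threshold solution. The threshold has \(F_i>0\) there: \(F_1\sim\tau\) near \(0\), and everything is regular on the compact remainder. Since \(c(b,\mu)\to c_*\) uniformly by \eqref{eq:rho-detune}, positivity holds uniformly.

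\emph{Cone chart.} For \(s_0\le s\le s_b\), Lemma~\ref{lem:absolute-inner} keeps the normalized shape state \(U_{\PE}\) within \(O(b^2e^{2s})\le O(\delta_b^2)\) of \(U_{\RF}\). The state \(U_{\RF}\) stays in a small neighbourhood of \(p_2\), because its unstable component is \(O(\mu_0\delta_b^{\beta_+})\). At \(p_2\) all \(Z_i\) are positive and bounded, \(R^2=5/18\), and the common radial scale is reconstructed from the Hamiltonian constraint with nonzero derivative. Hence the weighted factors \(x_i=f_i/t\) stay in a compact subset of \((0,\infty)\), and in particular \(f_i>0\). On \([\delta_b,t_*]\) the same conclusion follows from Lemma~\ref{lem:outer-parity}. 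There the even weighted state is \(O(\vartheta_b)\)-close to \(E_{\widehat\mu}\), which has \(x_e,y_e\) near \(1/\sqrt5,\sqrt{2/5}\) after shrinking \(t_*\) and \(\mu_0\). The odd variable obeys \(|z|\le CAt^{-5/2}\le C(b/\delta_b)^{5/2}\to0\). So \(f_1/f_2=e^{z}\) stays near \(1\), and each \(f_i/t\) is bounded below.

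\emph{Outer corridor.} For \(|\nu|\le2\mu_0\), the symmetric solutions \(G_\nu\) are \(C^0\)-close to the sine cone on \([t_*,T_0+1]\), after shrinking \(\mu_0\). The sine cone has \(f_i=c_i\sin t\ge c_i\sin t_*>0\) on \([t_*,\pi-t_*]\), which contains \([t_*,T_0+1]\). The exact solution differs from \(G_{\widehat\mu}\) by \(O(\varepsilon(b))\), using regular ODE dependence from \(t_*\) together with the Step~2 corridor estimates of Lemma~\ref{lem:matching}. By \eqref{eq:event-shift}, \(T(b,\mu)\) lies within \(O(\varepsilon(b)+\mu_0)\) of \(\pi/2\). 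So for \(b_0,\mu_0\) small, all \(f_i\) are bounded below by a positive constant up to \(T\).

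The main obstacle is the bootstrap logic. The estimates of Lemmas~\ref{lem:absolute-inner}--\ref{lem:matching} implicitly assume the solution already exists and stays in the cone chart. I would therefore run a continuity argument. Let \(t^\dagger\) be the supremum of times up to which the solution exists and satisfies the weighted bounds with twice the constants. The estimates then return the bounds with the original constants. This rules out \(t^\dagger<T\), since the regular ODE extends past \(t^\dagger\) whenever the \(f_i\) are bounded away from \(0\) and \(\infty\) and the \(L_i\) are bounded. One must also check that no earlier zero of \(H\) occurs. In the inner and cone regions this holds because \(tH\approx6\); in the outer corridor it holds because the first zero of \(H\) is unique and transverse near \(\pi/2\).
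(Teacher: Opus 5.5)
Your proposal follows the paper's proof essentially step for step: uniform positivity on a fixed rescaled focal segment, comparison with the threshold in the cone chart via Lemma~\ref{lem:absolute-inner}, weighted comparison with $G_{\widehat\mu}$ together with $|z|\le Cb^{5/4}$ via Lemma~\ref{lem:outer-parity}, and \eqref{eq:event-shift} to place $T(b,\mu)$ in the fixed positive corridor, with your explicit bootstrap playing the role of Remark~\ref{rem:no-circular-positivity}. One small slip is in your extra check against an earlier zero of $H$: $tH\approx6$ fails near the focal orbit, where $tH\to2$, but the check is not needed anyway, because \eqref{eq:Hmono} makes $H$ strictly decreasing, so the $H=0$ event in the outer corridor is automatically the first one.
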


\begin{proof}
\emph{Inner corridor.}
We make the uniform positivity margin explicit.  Work first in the inner
variables $t=b\tau$, $f_i=bF_i$.  Choose a fixed $\tau_0>1$.  Smooth focal
expansions and continuous dependence on the focal parameter give, uniformly
for $|\mu|\le\mu_0$ and small $b$,
\[
 F_i(\tau)>0\qquad(0<\tau\le\tau_0,\ i=1,2,3).
\]
For $\tau\ge\tau_0$, the threshold trajectory lies in the positive
projective chart and converges to the positive cone $p_2$.  Increasing
$\tau_0$ if necessary, there is therefore $c_{\rm pos}>0$ such that along the
threshold
\[
 \frac{F_{i,*}(\tau)}{\tau}\ge 4c_{\rm pos}
 \qquad(\tau\ge\tau_0,\ i=1,2,3).
\]
The detuned Ricci-flat orbit converges to the threshold uniformly on the
inner corridor.  Its stable-entry error is $O(\rho(b))$ and its unique
unstable even component is at most $O(\delta_b^{\beta_+})$ there; the
remaining stable tail is $o(1)$.  Lemma~\ref{lem:absolute-inner} gives the
positive-Einstein perturbation $O(b^2e^{2s})$, which is
$O(\delta_b^2)$ at $s=s_b$.
After decreasing $b_0$ these errors are smaller than $c_{\rm pos}$ in the
normalized projective variables.  Consequently
\[
 \frac{f_i(t)}{t}\ge c_{\rm pos}
 \qquad(b\tau_0\le t\le\delta_b,\ i=1,2,3).
\]
Together with the focal interval this proves positivity throughout the inner
region and, in particular, gives a constant $c_{\rm in}>0$, independent of
$b$ and $\mu$, such that
\begin{equation}\label{eq:overlap-positive-margin}
 f_i(\delta_b)\ge c_{\rm in}\delta_b,\qquad i=1,2,3.
\end{equation}

\emph{Outer corridor.}
For the outer interval use the weighted variables from
Lemma~\ref{lem:outer-parity},
\[
 x_e=\frac{\sqrt{f_1f_2}}{t},\qquad y_e=\frac{f_3}{t},
 \qquad z=\log\frac{f_1}{f_2}.
\]
For $|\nu|\le2\mu_0$, the symmetric family $G_\nu$ depends continuously on
$\nu$ and is positive on a fixed corridor containing all its first $H=0$
events.  Hence, after shrinking $\mu_0$, there is $c_{\rm out}>0$ such that
\begin{equation}\label{eq:outer-reference-margin}
 x_{e,\nu}(t)\ge2c_{\rm out},\qquad y_{e,\nu}(t)\ge2c_{\rm out}
\end{equation}
throughout that corridor.

The actual matched solution satisfies, by
\eqref{eq:weighted-even-state},
\[
 |x_e-x_{e,\widehat\mu}|+|y_e-y_{e,\widehat\mu}|
 \le C\vartheta_b,
\]
while the odd Frobenius estimate gives for $t\ge\delta_b$
\[
 |z(t)|\le Cb^{5/2}t^{-5/2}
 \le Cb^{5/4}.
\]
Since $\vartheta_b\to0$ (and hence also $\varepsilon(b)\to0$), choose $b_0$
so that
\[
 C\vartheta_b<c_{\rm out},\qquad Cb^{5/4}<1
 \qquad(0<b<b_0).
\]
Then \eqref{eq:outer-reference-margin} yields
$x_e,y_e\ge c_{\rm out}>0$.  Because
\[
 f_1=tx_e e^{z/2},\qquad
 f_2=tx_e e^{-z/2},\qquad
 f_3=ty_e,
\]
all three scale factors remain strictly positive on the entire outer
corridor.

\emph{Moving reflection section.}
Finally \eqref{eq:event-shift} gives
$|T(b,\mu)-T_{\widehat\mu}|\le C\varepsilon(b)$, so for smaller $b_0$ the
moving first-$H=0$ event lies inside this same fixed positive corridor.
Thus no scale factor can vanish before the moving reflection section.  Since the
Einstein system is a regular ODE whenever $f_1f_2f_3>0$, the solution is
regular on the whole half-interval.
\end{proof}

\begin{remark}[Continuation to the moving reflection section]\label{rem:no-circular-positivity}
The inner and outer matching estimates are first obtained on the fixed
positive coordinate corridors on their maximal interval of validity.  Their
constants depend only on those fixed corridors.  The quantitative lower
bounds in Lemma~\ref{lem:positivity} show that, for sufficiently small $b$,
the solution stays a uniform distance from every face $f_i=0$ up to the
moving $H=0$ event.  Hence the maximal interval of validity reaches that
event.  Thus the local matching estimates extend throughout the interval on
which the reflection detector is evaluated.
\end{remark}

\section{Closing the metric: repeated phase rectangles}

The matching asymptotics reduce closure to a two-dimensional sign argument.
Choose \(\mu_0>0\) small enough that, for some \(m_e>0\),
\begin{equation}\label{eq:even-face-margin}
 \mathcal E(-\mu_0)\le-m_e<0<m_e\le\mathcal E(\mu_0).
\end{equation}
Writing
\[
 C_o(0)Q=Re^{i\phi},\qquad R>0,
\]
decrease $\mu_0$ further so that
\begin{equation}\label{eq:odd-coefficient-margin}
 |C_o(\mu)Q-C_o(0)Q|\le R/4
 \qquad(|\mu|\le\mu_0).
\end{equation}
For every integer \(k\), define the consecutive half-period endpoints
\[
 b_k^{\mathrm{R}}
 =\exp\!\left(-\frac{k\pi-\phi}{\omega}\right),
 \qquad
 b_k^{\mathrm{L}}
 =\exp\!\left(-\frac{(k+1)\pi-\phi}{\omega}\right).
\]
Thus
\[
 b_k^{\mathrm{L}}<b_k^{\mathrm{R}},\qquad
 b_k^{\mathrm{L}}=b_{k+1}^{\mathrm{R}},\qquad
 b_k^{\mathrm{R}}\longrightarrow0,
\]
and the phase runs through one full half-period on each interval
\([b_k^{\mathrm{L}},b_k^{\mathrm{R}}]\).  Set
\[
 \mathcal R_k=[b_k^{\mathrm{L}},b_k^{\mathrm{R}}]
 \times[-\mu_0,\mu_0].
\]
The focal branch, the regular ODE flow, and the transverse first-\(H=0\)
hitting time depend continuously on \((b,\mu)\); hence the exact detector
\(\mathcal F\) in \eqref{eq:detector-map} is continuous on every sufficiently
late closed rectangle \(\mathcal R_k\).

Fix \(C_{\rm det}\) so that both remainders in
\eqref{eq:F2}--\eqref{eq:F1} are bounded by
\(C_{\rm det}\varepsilon(b)\), uniformly for \(|\mu|\le\mu_0\).
By construction,
\[
 \phi+\omega\log(1/b_k^{\mathrm{R}})=k\pi,
 \qquad
 \phi+\omega\log(1/b_k^{\mathrm{L}})=(k+1)\pi.
\]
Hence the leading normalized odd responses on the two vertical faces are
\((-1)^kR\) and \((-1)^{k+1}R\).  By
\eqref{eq:odd-coefficient-margin} their absolute values remain at least
\(3R/4\), with opposite signs, uniformly in \(|\mu|\le\mu_0\).

Because \(\varepsilon(b)\to0\), there exists \(\bar b>0\) such that
\[
 C_{\rm det}\varepsilon(b)<\min\{R/4,m_e/2\}
 \qquad(0<b<\bar b);
\]
no monotonicity of \(\varepsilon\) is used.  Choose \(K\) so large that, for
every \(k\ge K\), \(\mathcal R_k\subset(0,\bar b)\times[-\mu_0,\mu_0]\) and
the whole rectangle lies in the inverse-coordinate, positivity and uniform
matching domains.  Then \(F_1\) has opposite signs on the two \(b\)-faces of
\(\mathcal R_k\).  Likewise, \eqref{eq:even-face-margin} and \eqref{eq:F2}
give
\[
 F_2(b,-\mu_0)<0<F_2(b,\mu_0)
 \qquad(b_k^{\mathrm{L}}\le b\le b_k^{\mathrm{R}}).
\]
Thus on the two \(b\)-faces of \(\mathcal R_k\), \(F_1\) has opposite
signs, while on the two \(\mu\)-faces \(F_2\) has opposite signs.  By the
two-dimensional intermediate value theorem just described
(Poincar\'e--Miranda), these opposite-face sign conditions give
\[
 (b_k,\mu_k)\in\mathcal R_k,\qquad
 \mathcal F(b_k,\mu_k)=0
\]
for every \(k\ge K\).  Since \(F_1\) is nonzero on every shared vertical
boundary, these zeros lie in the interiors of their half-period cells; in
particular they are distinct and \(b_k\to0\).

Each detector zero yields the conditions for a smooth Weyl
reflection across the corresponding principal orbit.  The resulting double has the standard adjoint \(SU(3)\) group
diagram on \(S^7\).

\medskip
\noindent\textbf{What has been proved at this point.}
The Ricci-flat analysis supplies a genuine threshold trajectory and a
nonzero oscillatory coefficient \(Q\); the matching theorem transfers its
phase to the \emph{exact} positive-Einstein detector with an error tending to
zero uniformly in the detuning parameter.  The fixed sign margins on the four
faces of each late rectangle are therefore sign statements for the exact
closing equations themselves.  In particular, the existence theorem is
entirely analytic.

\begin{proposition}[Logarithmic quantization]\label{prop:quantization}
Let \(\omega=\sqrt{15}/2\).  The detector zeros above may be chosen, one in
each sufficiently late logarithmic half-period, so that for some
\(\theta_*\in\mathbb R/\pi\mathbb Z\),
\[
 \omega\log\frac1{b_k}=k\pi+\theta_*+o(1).
\]
Moreover \(\mu_k\to0\), and
\[
 \frac{b_{k+1}}{b_k}\longrightarrow
 e^{-\pi/\omega}=e^{-2\pi/\sqrt{15}}.
\]
\end{proposition}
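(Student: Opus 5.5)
The proof will refine the Poincar\'e--Miranda rectangles of the closing argument. The aim is to locate the detector zeros inside shrinking sub-rectangles about the zeros of the limiting map
\[
 \mathcal G_\infty(\theta,\mu)=\bigl(\Re(C_o(\mu)Qe^{i\theta}),\ \mathcal E(\mu)\bigr),
 \qquad \theta=\omega\log(1/b).
\]
I will use Lemma~\ref{lem:matching} to write
\[
 \mathcal G_k(\theta,\mu):=\bigl(b^{-5/2}F_1(b,\mu),F_2(b,\mu)\bigr)
 =\mathcal G_\infty(\theta,\mu)+O(\varepsilon(b)),
\]
uniformly for $|\mu|\le\mu_0$ and $\theta$ ranging over the $k$-th half-period $[k\pi-\phi,(k+1)\pi-\phi]$. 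Since $F_1$ and $b^{-5/2}F_1$ vanish together, zeros of $\mathcal G_k$ are exactly zeros of $\mathcal F$.

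The first step identifies the zero set of $\mathcal G_\infty$ on that strip. By \eqref{eq:E}, $\mathcal E(\mu)=C_e\mu+O(\mu^2)$ with $C_e>0$, so after shrinking $\mu_0$ the map $\mathcal E$ is strictly increasing and vanishes only at $\mu=0$. On $\mu=0$ the first component equals $R\cos(\theta+\phi)$, where $C_o(0)Q=Re^{i\phi}$. In the strip this has exactly one zero, at $\theta+\phi\equiv\pi/2\pmod\pi$, and the zero is transverse. Hence $\mathcal G_\infty$ has a unique zero $(\theta^{(k)},0)$ in each half-period, with $\theta^{(k)}=k\pi+\theta_*$ for a fixed $\theta_*$ determined by $\phi$ modulo $\pi$. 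The reindexing from $k$ to $n$ in Theorem~\ref{thm:main} only shifts $\theta_*$.

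The second step is the localization. Fix $\eta>0$ and take the small rectangle $[\theta^{(k)}-\eta,\theta^{(k)}+\eta]\times[-\eta,\eta]$. The following bounds are uniform in $k$, because $\mathcal G_\infty$ is $\pi$-periodic in $\theta$ up to a sign:
\begin{itemize}
 \item on the $\mu$-faces, $|\mathcal E(\pm\eta)|\ge c(\eta)>0$, with opposite signs;
 \item on the $\theta$-faces, $|\Re(C_o(\mu)Qe^{i\theta})|\ge c(\eta)>0$ with opposite signs for $|\mu|\le\eta$, since $C_o$ is continuous and the cosine crosses transversally.
\end{itemize}
Once $C_{\rm det}\varepsilon(b)<c(\eta)/2$ for all $b$ in the cell, which holds for all $k\ge K(\eta)$, Poincar\'e--Miranda gives a detector zero in this small rectangle. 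Choosing $\eta_j\downarrow0$ and $K(\eta_j)$ increasing, and taking for each $k$ the zero from the smallest admissible $\eta_j$, gives a choice with $\omega\log(1/b_k)-k\pi-\theta_*\to0$ and $\mu_k\to0$. This is exactly the freedom ``may be chosen'' in the statement.

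The ratio law then follows by subtracting consecutive phase relations:
\[
 \omega\log\frac{b_k}{b_{k+1}}=\pi+o(1),
 \qquad\text{so}\qquad
 \frac{b_{k+1}}{b_k}\to e^{-\pi/\omega}=e^{-2\pi/\sqrt{15}}.
\]

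The main obstacle is the uniformity of the face margins $c(\eta)$ in $k$, together with the requirement that the error in Lemma~\ref{lem:matching} be uniform in $(b,\mu)$ and not merely along sequences. Lemma~\ref{lem:matching} and the remark after it already give this uniformity in $\mu$ and $b$. The periodicity of the leading term in $\theta$ makes the margins $k$-independent. The remaining care is to check that the small rectangles stay inside the domains of inverse coordinates, positivity and matching, which holds for $k$ large exactly as in the closing argument.
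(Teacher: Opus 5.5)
Your strategy works, but the $\theta$-face estimate for your small square needs a repair.

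At $\theta=\theta^{(k)}\pm\eta$ the leading odd response $R\cos(\theta+\phi)$ has size only $R\sin\eta$. Letting $\mu$ range over $[-\eta,\eta]$ changes the first component by up to $\sup_{|\mu|\le\eta}|C_o(\mu)Q-C_o(0)Q|$, which is also of order $\eta$. Continuity of $C_o$ alone does not make this smaller than $R\sin\eta$. For example, if the argument of $C_o(\mu)Q$ varied with slope larger than $1$ in $\mu$, the zero curve $\theta=(k+\tfrac12)\pi-\arg(C_o(\mu)Q)$ would leave your square through the $\theta$-faces. The Poincar\'e--Miranda sign condition would then fail there.

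The fix is to decouple the side lengths. Use $[\theta^{(k)}-\eta,\theta^{(k)}+\eta]\times[-\eta',\eta']$, with $\eta'\le\eta$ chosen so that $\sup_{|\mu|\le\eta'}|C_o(\mu)Q-C_o(0)Q|\le\frac12R\sin\eta$. The $\mu$-face margin becomes $c_e\eta'$. Your uniformity in $k$ (via the sign-periodicity of $\mathcal G_\infty$), the choice of $K(\eta)$, and the diagonal selection then go through unchanged.

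With that fix, your argument is a genuinely different route from the paper's. The paper does not apply Poincar\'e--Miranda a second time. It takes an arbitrary zero $(b_k,\mu_k)$ already produced in the half-period rectangle $\mathcal R_k$ and localizes it a posteriori:
\begin{itemize}
\item After shrinking $\mu_0$ so that $|\mathcal E(\mu)|\ge c_e|\mu|$, the even estimate \eqref{eq:F2} gives $|\mu_k|\le C\varepsilon(b_k)$.
\item Lipschitz continuity of $C_o$ then gives $C_o(\mu_k)Q=Re^{i\phi}+o(1)$.
\item Then \eqref{eq:F1} yields $\cos(\omega\log(1/b_k)+\phi)=o(1)$. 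Since this phase already lies in $[k\pi,(k+1)\pi]$, it is forced to equal $k\pi+\pi/2+o(1)$.
\end{itemize}
This is shorter and gives more. Every detector zero in $\mathcal R_k$ obeys the quantization law, so no selection and no diagonal sequence $\eta_j$ is needed. It also gives the explicit rate $|\mu_k|=O(\varepsilon(b_k))$.

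Your a priori localization around the nondegenerate zero of the limiting map proves only that suitably placed zeros exist. That is exactly what the statement's ``may be chosen'' requires, and it would be the natural tool if arbitrary zeros could not be controlled. Here, though, the triangular structure gives that control at no cost: the even component depends only on $\mu$ at leading order and is injective there.
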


\begin{proof}
Shrink \(\mu_0\), if necessary, so that the expansion
\(\mathcal E(\mu)=C_e\mu+O(\mu^2)\), with \(C_e>0\), makes
\(\mathcal E\) one-to-one on \([-\mu_0,\mu_0]\) and gives
\(|\mathcal E(\mu)|\ge c_e|\mu|\) there for some \(c_e>0\).
At a detector zero, \eqref{eq:F2} therefore implies
\[
 |\mu_k|\le C\varepsilon(b_k)=o(1).
\]
Consequently the Lipschitz dependence of \(C_o\) gives
\[
 C_o(\mu_k)Q=Re^{i\phi}+o(1).
\]
Using \eqref{eq:F1} at the same zero yields
\[
 \cos\!\left(\omega\log\frac1{b_k}+\phi\right)=o(1).
\]
The phase of \(b_k\) lies in the \(k\)-th half-period
\([k\pi,(k+1)\pi]\).  The cosine has exactly one zero in that interval,
namely \((k+\tfrac12)\pi\); hence
\[
 \omega\log\frac1{b_k}+\phi
 =k\pi+\frac\pi2+o(1).
\]
Thus the asserted phase law holds with
\(\theta_*=\pi/2-\phi\pmod\pi\).  Subtracting the formulas for consecutive
indices gives
\[
 \omega\log\frac{b_k}{b_{k+1}}=\pi+o(1),
\]
which proves the ratio limit.
\end{proof}

\begin{remark}[Normalization ledger]\label{rem:normalization-ledger}
There are three distinct operations in the proof.  The choice
\(g=dt^2+g_t\) fixes the unit-speed normal coordinate; smoothness then forces
\(f_1'(0)=1\).  The equation \(\Ric(g)=6g\) fixes the overall homothety scale
of the compact Einstein metric.  Finally, \(t=b\tau,\ f_i=bF_i\) replaces
\(g\) by the blown-up metric \(b^{-2}g\) in the inner analysis and changes the
Einstein constant from \(6\) to \(6b^2\).  None of these operations prescribes
the first \(H=0\) time \(T(b,\mu)\) or the orbit-space length \(2T(b,\mu)\).
\end{remark}

\begin{proof}
Lemma~\ref{lem:positivity} shows that every detector zero determines a
regular positive Einstein half-metric.  At such a
zero, the first $H=0$ orbit satisfies
\[
 f_1=f_2,\qquad L_3=0.
\]
Since \(H=0\), also \(L_1+L_2=0\).  Because \(f_1=f_2>0\) at the
reflection orbit,
\[
 f_1'=-f_2',\qquad f_3'=0.
\]
These are exactly the \(C^1\) fixed-point conditions for the Weyl reflection.
Lemma~\ref{lem:positivity} gives \(f_i(T)>0\), so this orbit is principal and
the Einstein ODE is regular there.  Writing \(s=t-T(b,\mu)\) and abbreviating
\(T=T(b,\mu)\), define for
\(0\le s\le T\)
\[
 f_1(T+s)=f_2(T-s),\qquad
 f_2(T+s)=f_1(T-s),\qquad
 f_3(T+s)=f_3(T-s).
\]
At \(s=0\) both the metric coefficients and their first derivatives match
after the Weyl identification.  Since \(f_i(T)>0\), the Einstein equations are
a regular analytic ODE system in a neighborhood of \(t=T\).  The reflected
triple satisfies the same system and has the same Cauchy data at \(t=T\).
Ordinary ODE uniqueness therefore identifies it with the original local
continuation; in particular the reflected metric is not merely \(C^1\), but
smooth (indeed real analytic in the regular variables) across the principal
orbit \(t=T\).  At the right endpoint \(t=2T\),
\[
 f_2(2T)=f_1(0)=0,\qquad
 f_1(2T)=f_3(2T)=b.
\]
If \(\rho=2T-t\) is inward arclength from the right dsingular orbit, then the
reflection formulas give
\[
 f_2(2T-\rho)=f_1(\rho),\qquad
 f_1(2T-\rho)=f_2(\rho),\qquad
 f_3(2T-\rho)=f_3(\rho).
\]
Thus, as functions of the local signed radial variable \(\rho\), the right
endpoint satisfies the same smooth singular-orbit parity as the left:
the collapsing factor is odd with unit first derivative and the two
noncollapsing factors are exchanged.  No additional endpoint condition is
required.

For completeness, the topology can be read directly from the action.  Regard
$S^7$ as the unit sphere in the eight-dimensional adjoint representation
$\mathfrak{su}(3)$.  A regular element has stabilizer $T^2$, while an endpoint
of a Weyl chamber has stabilizer $U(2)$ and orbit $\mathbb{CP}^2$.  The two singular isotropy groups correspond to two distinct roots and satisfy
\(U(2)_\pm/T^2\cong S^2\), so both caps are honest rank-three disk bundles
over \(\mathbb{CP}^2\).  The embedded group diagram
\[
 T^2\subset U(2)_-,U(2)_+\subset SU(3)
\]
has \(U(2)_-\) and \(U(2)_+\) corresponding to the two endpoint walls of an
\(A_2\) Weyl chamber.  The Weyl element used in the reflection exchanges
these two walls and the associated first two isotropy summands.  Consequently
the double
\[
 SU(3)\times_{U(2)_-}D^3
 \ \cup_{\,SU(3)/T^2}\
 SU(3)\times_{U(2)_+}D^3
\]
with this gluing is precisely the standard adjoint cohomogeneity-one manifold
\(S(\mathfrak{su}(3))\cong S^7\).  Thus no alternative double-disk-bundle or
orbifold topology is introduced by the reflection.  Finally the phase rectangles tend to \(b=0\),
hence \(b_n\to0\).  The focal anisotropy of the corresponding solution is
\[
 \beta_n=c(b_n,\mu_n).
\]
By the uniform detuning estimate \eqref{eq:rho-detune},
\[
 |\beta_n-c_*|\le \rho(b_n)\longrightarrow0.
\]
The threshold is the far endpoint of the entrance interval on the chosen
side \(c<0\), strictly away from the Bryant--Salamon value \(c=0\); hence
\(c_*<0\).  Therefore \(\beta_n\to c_*<0\), and in particular
\(\beta_n\ne0\) for every sufficiently large \(n\).  The final assertion,
pairwise nonisometry after passage to a subsequence, follows from the
curvature estimate in the next section.
\end{proof}

Proposition~\ref{prop:quantization} shows that the half-turn scale is not
merely a feature of the comparison rectangles: one may choose an exact
Einstein closing in every sufficiently late half-period, and these exact
closing scales satisfy
\[
 \frac{b_{k+1}}{b_k}\longrightarrow
 e^{-\pi/\omega}=e^{-2\pi/\sqrt{15}}.
\]
No uniqueness of the nonlinear detector zero inside a half-period is asserted
or needed for this conclusion.

\section{Geometry of the quantized sequence}\label{sec:geometry-sequence}

We finish by identifying the two geometric scales of the sequence constructed
above.  The Ricci-flat threshold is not merely an auxiliary limiting object
used to create detector sign changes: after blow-up it is exactly the metric
that bubbles off at each singular orbit.  Concretely, the curvature
concentrates in neighborhoods of radius comparable to $b_n$, and rescaling
those neighborhoods by $b_n^{-2}$ produces a complete noncompact Ricci-flat
limit.  On the unrescaled scale the same sequence converges away from the
endpoints to the singular sine cone over the nonnormal Wallach metric.

We first record a curvature estimate that will also sharpen the
nonisometry argument.

\begin{lemma}[Curvature in a controlled cone corridor]
\label{lem:cone-curvature-bound}
Let
\[
 g=dt^2+\sum_{i=1}^3 f_i(t)^2Q|_{\mathfrak m_i}
\]
be an $SU(3)$-invariant diagonal metric satisfying $\Ric(g)=6g$ on an
interval $I\subset(0,T]$.  Suppose that, for constants $c_0,C_0>0$,
\begin{equation}\label{eq:cone-corridor-bounds}
 c_0t\le f_i(t)\le C_0t,
 \qquad |tL_i(t)|\le C_0,
 \qquad i=1,2,3.
\end{equation}
Then
\[
 |\Rm(g)|(t)\le C t^{-2}\qquad(t\in I),
\]
where $C$ depends only on $c_0,C_0,T$ and the fixed homogeneous
normalization $Q$.
\end{lemma}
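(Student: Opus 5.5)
The plan is to write the full curvature tensor of a cohomogeneity-one diagonal metric explicitly in an orthonormal frame and estimate every component. Take the frame \(e_0=\partial_t\) together with \(e^{(i)}_\alpha=f_i^{-1}X^{(i)}_\alpha\), where \(X^{(i)}_\alpha\) is a fixed \(Q\)-orthonormal basis of \(\mathfrak m_i\). The curvature then splits into three kinds of components.

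\begin{itemize}
\item The radial ones are
\[
 \Rm(e_0,e^{(i)},e^{(i)},e_0)=-\frac{f_i''}{f_i}.
\]
\item The mixed ones \(\Rm(e_0,\cdot,\cdot,\cdot)\) come from the Codazzi equation. They are linear combinations of \(L_i\) and of \(L_j-L_k\), weighted by structure-constant ratios \(f_k/(f_if_j)\).
\item The purely tangential ones come from the Gauss equation,
\[
 \Rm=\Rm_{g_t}+(\text{second fundamental form})\wedge(\text{second fundamental form}),
\]
where the second-fundamental-form part has entries \(L_iL_j\).
\end{itemize}

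I would not compute the precise formulas. It suffices to record their structure: each component is a polynomial in \(L_i\), \(f_i''/f_i\), and quantities homogeneous of degree \(-2\) in \((f_1,f_2,f_3)\). These last are rational functions \(P(f)/(f_1^2f_2^2f_3^2)\), obtained from the Levi-Civita/O'Neill formulas for the homogeneous metric on \(SU(3)/T^2\). Their coefficients depend only on \(Q\), and their denominators involve only products of the \(f_i\).

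The bounds then come term by term from \eqref{eq:cone-corridor-bounds}.

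\begin{itemize}
\item Since \(|L_i|\le C_0/t\), every term \(L_iL_j\) is \(O(t^{-2})\).
\item The intrinsic curvature of \(g_t\) is homogeneous of degree \(-2\) under \(f\mapsto\lambda f\). On the compact set of ratios \(f_i/f_j\in[c_0/C_0,C_0/c_0]\) it is bounded, so after rescaling by \(t\) it is \(O(t^{-2})\). The \(f_i\) never approach zero relative to \(t\), so no denominator degenerates.
\item The Codazzi terms have the form (structure constant)\(\times L\times(\text{ratio})/f\). This gives \(O(t^{-1})\cdot O(t^{-1})=O(t^{-2})\).
\item For the radial term, write \(f_i''/f_i=L_i'+L_i^2\). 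The Einstein equation \eqref{eq:master-einstein} gives \(L_i'=r_i-6-HL_i\). Here \(r_i=O(t^{-2})\) by the same homogeneity and ratio argument applied to \eqref{eq:intro-orbit-ricci}, and \(HL_i=O(t^{-2})\). The constant \(6\) is at most \(36T^2t^{-2}\) because \(t\le T\). This is the only place where \(T\) enters.
\end{itemize}

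Collecting these bounds gives \(|\Rm|\le Ct^{-2}\) with \(C=C(c_0,C_0,T,Q)\).

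The main obstacle is bookkeeping rather than analysis. One has to make sure every curvature component, including the off-diagonal mixed components between different isotropy summands, is covered by the structural description. I would handle this by invoking the standard formulas for cohomogeneity-one curvature (Gauss, Codazzi, and the radial equation) in the form of Grove--Ziller or Eschenburg--Wang. The Codazzi term involves \(\nabla L\) on the orbit, which equals \(L\) composed with brackets; this is bounded by \(|L|\cdot\max_{i,j,k} f_k/(f_if_j)\). I would avoid explicit Christoffel computations by using a scaling argument instead. Consider the rescaled metric \(\tilde g=t_1^{-2}g\) near any \(t_1\in I\). Its orbit metrics range over a compact set of homogeneous metrics, and its second fundamental form is bounded. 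Its radial curvature \(t_1^2f_i''/f_i\) is bounded by the estimate above. Hence \(|\Rm_{\tilde g}|\le C\) by continuity of the Gauss--Codazzi expressions on a compact parameter set, and scaling back gives \(|\Rm_g|(t_1)\le Ct_1^{-2}\).
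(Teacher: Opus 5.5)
Your proposal is correct and follows essentially the paper's argument: the radial curvatures are bounded through $f_i''/f_i=L_i'+L_i^2$ together with the Einstein equation $L_i'=r_i-6-HL_i$, and the orbit-tangent ones through the Gauss equation and the degree-$(-2)$ homogeneity of the orbit curvature on the compact set $f_i/t\in[c_0,C_0]$, with $t\le T$ used only to absorb the constant $6$. You also explicitly bound the mixed Codazzi components $\Rm(\partial_t,X,Y,Z)$, which are of size $|L_j-L_k|\cdot O(1/f)=O(t^{-2})$. This makes your argument slightly more complete than the paper's, whose proof only estimates radial and orbit-tangent planes, and those two families alone do not control the full tensor norm $|\Rm|$.
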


\begin{proof}
The radial sectional curvatures are
\[
 K(\partial_t,\mathfrak m_i)=-\frac{f_i''}{f_i}=-L_i'-L_i^2.
\]
The Einstein equations give
\[
 L_i'=r_i-6-HL_i,
 \qquad H=2(L_1+L_2+L_3).
\]
By \eqref{eq:cone-corridor-bounds}, $|H|\le C/t$ and hence
$|HL_i|+L_i^2\le C/t^2$.  Writing $f_i=tu_i$, the Wallach orbit-Ricci
formula \eqref{eq:intro-orbit-ricci} becomes
\[
 r_i=t^{-2}\mathcal R_i(u_1,u_2,u_3),
\]
where $\mathcal R_i$ is smooth on the compact set
$[c_0,C_0]^3$.  Thus $|r_i|\le Ct^{-2}$ and
\[
 |L_i'|+L_i^2\le Ct^{-2}
\]
after increasing $C$ using $t\le T$.

For a two-plane tangent to a principal orbit, the Gauss equation expresses
its ambient curvature as its curvature in the homogeneous orbit metric
$g_t$ plus a quadratic expression in the principal curvatures $L_i$.
The latter is $O(t^{-2})$.  Since
\[
 g_t=t^2\sum_i u_i^2Q|_{\mathfrak m_i},
\]
every sectional curvature of $g_t$ is $t^{-2}$ times a smooth function of
$(u_1,u_2,u_3)$ on $[c_0,C_0]^3$, and is therefore $O(t^{-2})$ as well.
The radial and tangential estimates give the assertion.
\end{proof}

We next record the focal coefficient.  The smooth focal expansions are
\[
 f_1(t)=t+O(t^3),\qquad
 f_2(t)=b+\beta t+b_2t^2+O(t^3),\qquad
 f_3(t)=b-\beta t+b_2t^2+O(t^3).
\]
Using $L_2'=r_2-6-HL_2$ and the orbit-Ricci formula gives
\[
 r_2=\frac{2\beta}{bt}+\frac{3}{2b^2}+O(t),
 \qquad
 HL_2=\frac{2\beta}{bt}
      +\frac{4b_2}{b}-\frac{2\beta^2}{b^2}+O(t).
\]
The singular terms cancel, and comparison with
$L_2'=2b_2/b-\beta^2/b^2$ yields
\[
 b_2=-b+\frac{\beta^2}{2b}+\frac1{4b}.
\]
Consequently, for the radial plane through $\mathfrak m_2$,
\begin{equation}\label{eq:focal-curvature-exact}
 K_{02}(0)=-\frac{f_2''(0)}{f_2(0)}
 =2-\frac{1+2\beta^2}{2b^2}.
\end{equation}

\begin{proposition}[Sharp curvature scale]\label{prop:sharp-curvature-scale}
For the sequence $g_n$ of Theorem~\ref{thm:main},
\[
 \sup_{S^7}|\Rm(g_n)|\asymp b_n^{-2}.
\]
More precisely, at either singular orbit,
\begin{equation}\label{eq:focal-curvature-limit}
 b_n^2K_{02}^{(n)}\longrightarrow-\frac{1+2c_*^2}{2}.
\end{equation}
Hence
\begin{equation}\label{eq:curvature-quantization}
 \frac{|K_{02}^{(n+1)}|}{|K_{02}^{(n)}|}
 \longrightarrow e^{4\pi/\sqrt{15}}.
\end{equation}
\end{proposition}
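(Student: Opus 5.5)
The proof splits into three pieces: the exact focal value \eqref{eq:focal-curvature-limit}, the upper bound $\sup|\Rm(g_n)|\le Cb_n^{-2}$, and then the ratio law \eqref{eq:curvature-quantization}.

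\emph{Focal curvature.} Apply \eqref{eq:focal-curvature-exact} to $g_n$ with $(b,\beta)=(b_n,\beta_n)$. This gives
\[
 b_n^2K_{02}^{(n)}(0)=2b_n^2-\frac{1+2\beta_n^2}{2}.
\]
Since $b_n\to0$ and $\beta_n\to c_*$ (from the closing argument and \eqref{eq:rho-detune}), the right side tends to $-(1+2c_*^2)/2$. By the reflection formulas from the closing proof, the right singular orbit has the same local data with the roles of the summands exchanged. So I would check that the reflected radial plane through the corresponding noncollapsing summand gives the identical expression. The lower bound $\sup|\Rm|\ge c\,b_n^{-2}$ then follows immediately, because $|\Rm|\ge|K_{02}|$ and $1+2c_*^2>0$.

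\emph{Upper bound.} I would split the half-interval $[0,T_n]$ into three regions; reflection covers the other half.

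On the focal region $0\le t\le b_n\tau_0$, use the inner blow-up $b_n^{-2}g_n$. Its Einstein constant is $6b_n^2$, and the rescaled focal data converge to the threshold data by continuous dependence on the Eschenburg--Wang branch. The rescaled metrics are therefore smoothly close to the Ricci-flat threshold metric on a fixed compact focal neighborhood. This gives $|\Rm(b_n^{-2}g_n)|\le C$ there, hence $|\Rm(g_n)|\le Cb_n^{-2}$.

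On the corridor $b_n\tau_0\le t\le t_*$, combine the positivity margins of Lemma~\ref{lem:positivity} (inner bound $f_i/t\ge c_{\rm pos}$, outer weighted bounds $x_e,y_e\ge c_{\rm out}$, $|z|$ small) with the absolute shadowing of Lemma~\ref{lem:absolute-inner} and the weighted even control of Lemma~\ref{lem:outer-parity}. Together these give $c_0t\le f_i\le C_0t$ and $|tL_i|\le C_0$ uniformly in $n$. Lemma~\ref{lem:cone-curvature-bound} then yields $|\Rm|\le Ct^{-2}\le Cb_n^{-2}$.

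On the outer region $t_*\le t\le T_n$, the solution stays in a fixed compact regular corridor near $G_{\widehat\mu}$, so the curvature is uniformly bounded.

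\emph{Ratio law.} From the focal limit,
\[
 \frac{|K_{02}^{(n+1)}|}{|K_{02}^{(n)}|}=\frac{b_n^2}{b_{n+1}^2}\,(1+o(1)).
\]
By Proposition~\ref{prop:quantization}, $b_{n+1}/b_n\to e^{-2\pi/\sqrt{15}}$, so this ratio tends to $e^{4\pi/\sqrt{15}}$. Indexing by consecutive late half-periods is consistent with the indexing used there.

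\emph{Main obstacle.} I expect the hardest step to be the uniform cone-corridor hypotheses in the overlap of the inner and outer regions. Specifically, one needs the bound $|tL_i|\le C_0$ uniformly in $n$, not just bounds on the $f_i$. I would derive it from convergence of the normalized projective variables to $p_2$: on the threshold, $tL_i\to1$ in the cone chart. Then I would transfer this to $g_n$ using the $O(b^2e^{2s})$ shadowing and the $O(\vartheta_b)$ weighted even estimate, with the odd part of size $O(b^{5/4})$. The $L_i$ are reconstructed smoothly from these normalized variables, which should give the required uniform bound.
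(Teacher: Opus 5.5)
Your proposal is correct and follows essentially the same route as the paper: the exact focal formula together with $\beta_n\to c_*$ gives the limit and the lower bound, a focal/cone-corridor/outer decomposition with Lemma~\ref{lem:cone-curvature-bound} gives the upper bound, reflection handles the second half, and Proposition~\ref{prop:quantization} gives the ratio law. The differences are cosmetic: you split the corridor at the fixed section $t_*$ rather than at $\delta_{b_n}$, and you bound the region $[t_*,T_n]$ by regular compactness near $G_{\widehat\mu}$, whereas the paper applies the cone-curvature lemma on the whole outer corridor $[\delta_{b_n},T_n]$.
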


\begin{proof}
Equation \eqref{eq:focal-curvature-exact} and $\beta_n\to c_*$ give
\eqref{eq:focal-curvature-limit}, and hence the lower bound
$\sup|\Rm(g_n)|\ge c b_n^{-2}$.

For the upper bound consider first one half of the metric.  On a fixed
rescaled focal interval $0\le t\le \tau_0b_n$, put $t=b_n\tau$ and
$f_i=b_nF_i$.  The smooth focal construction, the convergence of the focal
parameter to $c_*$, and ordinary ODE dependence on every fixed $\tau$-interval
give a uniform curvature bound for the rescaled metrics, hence
$|\Rm(g_n)|\le Cb_n^{-2}$ there.

On the inner cone corridor
\[
 \tau_0b_n\le t\le\delta_{b_n},\qquad \delta_b=b^{1/2},
\]
the proof of Lemma~\ref{lem:positivity}, together with the cone-chart
shadowing estimates, gives uniformly
\[
 c_0\le \frac{f_i(t)}t\le C_0,
 \qquad |tL_i(t)|\le C_0.
\]
The upper bounds follow from the same compact cone chart that gives the lower
positivity margin: in the notation of Section~6, the weighted orbit factors
$f_i/t$ and the weighted logarithmic derivatives $tL_i$ remain in a fixed
compact neighborhood of the threshold cone.  Lemma~\ref{lem:cone-curvature-bound}
therefore gives
\[
 |\Rm(g_n)|(t)\le Ct^{-2}\le Cb_n^{-2}.
\]

On the outer corridor $\delta_{b_n}\le t\le T_n$, use the weighted
variables of Lemma~\ref{lem:outer-parity}.  The state $E$ there contains
$x_e,y_e$ and the corresponding weighted logarithmic derivatives.  Hence
\eqref{eq:weighted-even-state}, regular dependence of the symmetric family
$G_\nu$, and the odd first-order estimate imply uniformly
\[
 c_1\le x_e,y_e\le C_1,
 \qquad |tL_i|\le C_1,
 \qquad |z|\le C_1b_n^{5/2}t^{-5/2}.
\]
Since
\[
 f_1=tx_e e^{z/2},\qquad
 f_2=tx_e e^{-z/2},\qquad
 f_3=ty_e,
\]
we obtain, after increasing $C_1$,
\[
 c_2t\le f_i(t)\le C_2t,
 \qquad |tL_i(t)|\le C_2
\]
throughout the outer corridor.  Lemma~\ref{lem:cone-curvature-bound} therefore
applies there as well and gives
\[
 |\Rm(g_n)|(t)\le C\bigl(1+t^{-2}\bigr)
 \le C\delta_{b_n}^{-2}=Cb_n^{-1}\le Cb_n^{-2}.
\]
The midpoint Weyl reflection gives the same estimate on the second half.
This proves the upper bound.  Finally
\eqref{eq:curvature-quantization} follows from
\eqref{eq:focal-curvature-limit} and
$b_{n+1}/b_n\to e^{-2\pi/\sqrt{15}}$.
\end{proof}

\begin{proposition}[Two-ended Ricci-flat bubbling]\label{prop:two-ended-bubbling}
Let $p_n^-$ lie on the initial $\mathbb{CP}^2$ singular orbit and let $p_n^+$
lie on the opposite singular orbit.  Then, after the natural equivariant
identifications,
\[
 (S^7,b_n^{-2}g_n,p_n^\pm)
 \longrightarrow(M_*,g_{\RF,*},p_*)
\]
smoothly on compact subsets, where $g_{\RF,*}$ is the complete Ricci-flat
threshold metric selected in Proposition~\ref{prop:threshold}.  The two
pointed limits are isometric, up to the Weyl interchange of the first two
isotropy summands.
\end{proposition}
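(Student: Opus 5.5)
The plan is to first prove the statement at the initial singular orbit $p_n^-$, and then obtain the other end from the Weyl reflection used to close the metric. Work in the blown-up variables $t=b_n\tau$, $f_i=b_nF_i^{(n)}$. Then $b_n^{-2}g_n=d\tau^2+\sum_iF_i^{(n)}(\tau)^2Q|_{\mathfrak m_i}$ is an $SU(3)$-invariant metric with $\Ric=6b_n^2\,b_n^{-2}g_n$. Its focal data are normalized: $F_2(0)=F_3(0)=1$, $F_1'(0)=1$, and anisotropy $\beta_n=c(b_n,\mu_n)$. The limit manifold $M_*$ is the total space of the rank-three disk bundle $SU(3)\times_{U(2)_-}\mathbb R^3$ (that is, $\Lambda^2_-\mathbb{CP}^2$). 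On it $g_{\RF,*}$ is the threshold solution with $c=c_*$, which is complete by Proposition~\ref{prop:threshold} because its normalized trajectory is global with $\eta\to\infty$ and $H>0$. The natural equivariant identification is the diffeomorphism of the tube $\{\tau<\Lambda\}$ in $S^7$ with the corresponding tube in $M_*$ that is the identity on the orbit-space coordinate $\tau$ and on the group diagram.

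\emph{Step 1: convergence on the focal piece $\tau\le\tau_0$.} We use the real-analytic dependence of the Eschenburg--Wang germ on $(\text{Einstein constant},c)$ on the fixed branch. As $6b_n^2\to0$ and $\beta_n\to c_*$ (by \eqref{eq:rho-detune}), the germ converges smoothly, including the smooth extension across the singular orbit, to the Ricci-flat germ with parameter $c_*$. Writing the metric in smooth equivariant coordinates near $\mathbb{CP}^2$, the Taylor coefficients converge, so we get $C^\infty$ convergence of tensors on $\{\tau\le\tau_0\}$.

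\emph{Step 2: convergence on $\tau_0\le\tau\le\Lambda$ for each fixed $\Lambda$.} This is a regular ODE region. From Step 1 the initial data at $\tau_0$ converge, the coefficient $6b_n^2$ tends to $0$, and the threshold solution is regular on $[\tau_0,\Lambda]$. Continuous dependence therefore gives convergence of $(F_i,F_i')$ in $C^k([\tau_0,\Lambda])$, with higher derivatives controlled through the equations. Alternatively, Lemma~\ref{lem:absolute-inner} together with the detuning estimate already gives closeness of the normalized states up to $s_b\to\infty$. We also need $\Lambda<T_n/b_n$, which holds since $T_n\to\pi/2$ by \eqref{eq:event-shift}. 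Combining Steps 1 and 2 proves pointed smooth Cheeger--Gromov convergence at $p_n^-$, because exhausting $M_*$ by the tubes $\tau<\Lambda$ suffices. No curvature-bound compactness argument is needed: we have explicit convergence of metric coefficients on fixed domains, and completeness of the limit.

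\emph{Step 3: the opposite end.} The closed metric satisfies $f_1(2T-\rho)=f_2(\rho)$, $f_2(2T-\rho)=f_1(\rho)$, $f_3(2T-\rho)=f_3(\rho)$. So the isometry $t\mapsto 2T_n-t$, composed with the Weyl element that exchanges $\mathfrak m_1,\mathfrak m_2$ (conjugating $U(2)_-$ to $U(2)_+$), maps a neighborhood of $p_n^+$ isometrically onto a neighborhood of the initial orbit. Choosing $p_n^+$ as the image of $p_n^-$ and applying Step 1--2 gives the same limit, up to relabelling by the Weyl interchange. Homogeneity of the orbits makes the choice of basepoint on each $\mathbb{CP}^2$ irrelevant.

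The main obstacle is Step 1: the uniform smooth convergence across the singular orbit as both the Einstein constant and the focal parameter vary. I would handle it through the contraction-mapping form of the singular initial-value problem recalled in Lemma~\ref{lem:signed-a}, now treating the Einstein constant as an extra analytic parameter. The fixed point then depends continuously, indeed analytically, on $(b^2,c)$ in the smooth-orbifold Fr\'echet topology near $\tau=0$.
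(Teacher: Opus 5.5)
Your proposal is correct and follows essentially the same route as the paper. Both arguments blow up by $b_n$ and obtain smooth convergence of the rescaled germs to the Ricci-flat threshold germ with parameter $c_*$, using smooth dependence on the Einstein constant $6b_n^2$ and on $\beta_n\to c_*$ at the singular orbit and regular ODE dependence on fixed $\tau$-intervals away from it. Both then take completeness from Proposition~\ref{prop:threshold} and transfer the limit to the opposite end by the Weyl reflection isometry. Your additional remarks are minor details that the paper leaves implicit: that $T_n/b_n\to\infty$, so each tube $\{\tau<\Lambda\}$ fits inside the left half, and that homogeneity of the singular orbit makes the choice of basepoint irrelevant.
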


\begin{proof}
Write the exact closing parameters as $(b_n,\mu_n)$.  The proof of
Proposition~\ref{prop:quantization} gives $\mu_n\to0$, while
\eqref{eq:rho-detune} gives $\beta_n\to c_*$.  In the inner variables
\[
 t=b_n\tau,\qquad f_i(t)=b_nF_{i,n}(\tau),
\]
the Einstein equation is the Ricci-flat equation plus a perturbation of size
$b_n^2$.  On every fixed $\tau$-interval the smooth focal germs therefore
converge to the Ricci-flat germ with parameter $c_*$; equivalently, this is
the fixed-interval version of Lemma~\ref{lem:absolute-inner}.  On compact
sets separated from the singular orbit, standard ODE bootstrapping upgrades
this to $C^\infty$ convergence.  At the singular orbit itself, the smooth
focal-coordinate construction and its smooth dependence on the focal
parameters give the same $C^\infty$ convergence.  Thus
\[
 (S^7,b_n^{-2}g_n,p_n^-)
 \longrightarrow(M_*,g_{\RF,*},p_*)
\]
pointedly and smoothly.  Proposition~\ref{prop:threshold} supplies
completeness of the limit.

At a detector zero the conditions $H=0$, $f_1=f_2$ and $L_3=0$ imply
$L_1=-L_2$.  These are exactly the fixed-point conditions for the Weyl
reflection used in the closing construction.  The resulting reflection is
an isometry of $g_n$ interchanging the two singular orbits and the first two
isotropy summands.  Taking $p_n^+$ to be the reflected point proves the same
pointed limit at the opposite end.
\end{proof}

\begin{proposition}[Singular sine-cone limit]\label{prop:sine-cone-limit}
Let $h_2$ be the nonnormal Einstein metric on $SU(3)/T^2$ determined by
\[
 f_1=f_2=\frac1{\sqrt5},\qquad f_3=\sqrt{\frac25}.
\]
Then, on the regular part and after the natural equivariant identification,
\begin{equation}\label{eq:sine-cone-limit}
 g_n\longrightarrow g_{\mathrm{sc}}:=dt^2+\sin^2t\,h_2
\end{equation}
in $C^\infty_{\mathrm{loc}}((0,\pi)\times SU(3)/T^2)$.  The metric completion
of $g_{\mathrm{sc}}$ has two conical singularities, and its tangent cone at
either end is the asymptotic cone of $g_{\RF,*}$.
\end{proposition}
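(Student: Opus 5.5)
We prove the convergence on a fixed compact slab $[t_1,\pi-t_1]\times SU(3)/T^2$ by combining the outer matching of Section~7 with the Weyl reflection, and then read off the tangent cones from the explicit form of $g_{\mathrm{sc}}$.

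\emph{Step 1: first half of the slab.} Fix $t_1\in(0,\pi/2)$. Lemma~\ref{lem:outer-parity}, extended through the regular corridor as in Step~2 of Lemma~\ref{lem:matching}, gives
\[
 \|E_n(t)-E_{\widehat\mu_n}(t)\|_{\mathrm w}\le C\vartheta_{b_n},
 \qquad
 |z_n(t)|\le Cb_n^{5/2}t^{-5/2},
\]
uniformly on $[t_1,T_n]$. The proof of Proposition~\ref{prop:quantization} gives $\mu_n\to0$. Together with \eqref{eq:muhat}, this yields $\widehat\mu_n\to0$. The symmetric family $G_\nu$ depends continuously on $\nu$, and $G_0$ is the $p_2$ sine cone, for which $f_1=f_2=\sin t/\sqrt5$ and $f_3=\sqrt{2/5}\,\sin t$. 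Hence $f_{i,n}\to f_i^{\mathrm{sc}}$ and $L_{i,n}\to L_i^{\mathrm{sc}}$ uniformly on $[t_1,T_n]$. By \eqref{eq:event-shift} and the continuity of the event time in $\nu$, we also get $T_n\to T_0=\pi/2$.

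\emph{Step 2: upgrade to $C^\infty$ and extend past $\pi/2$.} On $[t_1,\pi/2+t_1']$ the Einstein system is a regular analytic ODE. Convergence of the Cauchy data $(f_i,f_i')$ at $t_1$, together with ODE dependence on initial data, therefore gives $C^k$ convergence for every $k$. This argument runs slightly beyond $T_n$ as well. Beyond $T_n$ we use the reflection formulas $f_1(T_n+s)=f_2(T_n-s)$, etc. Since $f_1^{\mathrm{sc}}=f_2^{\mathrm{sc}}$ and the sine cone is symmetric about $\pi/2$, the reflected pieces converge to $g_{\mathrm{sc}}$ on $[\pi/2,\pi-t_1]$. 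The shifts $T_n-\pi/2\to0$ only perturb the identification by a vanishing translation in $t$. A diagonal argument over $t_1\downarrow0$ then gives $C^\infty_{\mathrm{loc}}$ convergence on $(0,\pi)\times SU(3)/T^2$.

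\emph{Step 3: the completion.} As $t\to0$, $\sin t\sim t$, so
\[
 g_{\mathrm{sc}}=dt^2+t^2h_2\bigl(1+O(t^2)\bigr).
\]
Thus the completion adds one point at each end, and the tangent cone there is $dr^2+r^2h_2$. This point is genuinely singular: $h_2$ is not the round $S^6$ metric, so the cone is not flat. The same holds at $t=\pi$ by the symmetry $t\mapsto\pi-t$. The asymptotic cone of $g_{\RF,*}$ is the cone over $p_2$, which is the cone \eqref{eq:cone} with link $h_2$ by Proposition~\ref{prop:threshold}. The two cones therefore agree.

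The main obstacle is Step~1. We must check that the outer estimates hold with constants uniform up to a fixed $t_1$, not merely on the shrinking overlap. We also need $\widehat\mu_n\to0$ to imply convergence of $G_{\widehat\mu_n}$ to the sine cone in a norm strong enough to control $(f_i,f_i')$ at $t_1$. I would obtain this from the weighted variables $x_e,y_e,tL_i$, evaluated at the fixed time $t_1$, where the weights are harmless.
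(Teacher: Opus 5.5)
Your proposal is correct and follows essentially the same route as the paper. The outer parity estimates together with $\mu_n\to0$ (hence $\widehat\mu_n\to0$) give convergence on the left half and $T_n\to\pi/2$; regular ODE dependence and the Weyl reflection carry this across and past $\pi/2$; and $\sin^2t=t^2(1+O(t^2))$ identifies both tangent cones with $dr^2+r^2h_2$, the asymptotic cone from Proposition~\ref{prop:threshold}.

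The only difference is cosmetic. You propagate from the Cauchy data at a fixed $t_1$, which by itself already covers the whole slab because the sine cone is regular on all of $[t_1,\pi-t_1]$. The paper instead restarts from the matched state at $T_n$ near $\pi/2$ and uses a finite covering. In either version you should also cite the odd derivative bound $|z_n'|\le Cb_n^{5/2}t^{-7/2}$ to control $f_{i,n}'$.
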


\begin{proof}
Fix first a compact interval $J\Subset(0,\pi/2)$.  For the exact closing sequence,
$\mu_n\to0$, and \eqref{eq:muhat} gives $\widehat\mu_n\to0$.  Hence the
symmetric outer backgrounds $G_{\widehat\mu_n}$ converge smoothly on every
fixed regular corridor to $G_0$, which is the $p_2$ sine cone.  The weighted
even estimate \eqref{eq:weighted-even-state} tends to zero, while the odd
outer estimate has size $O(b_n^{5/2}t^{-5/2})$ and therefore tends to zero
with all derivatives on $J$.  Regular ODE dependence then gives smooth
convergence of the complete metric coefficients to those of $G_0$ on $J$.
The event estimate \eqref{eq:event-shift} and
$T_{\widehat\mu_n}\to T_0=\pi/2$ give $T_n\to\pi/2$.

After reflection, identify the regular part of $(S^7,g_n)$ with
\[
 (0,2T_n)\times SU(3)/T^2,
\]
using the original unit-speed radial coordinate on the left half and its
reflected continuation on the right.  Since $T_n\to\pi/2$, every compact
interval $J\Subset(0,\pi)$ is contained in $(0,2T_n)$ for all sufficiently
large $n$.  If $J$ stays a positive distance from $\pi/2$, the preceding
left-hand convergence and its reflected version give smooth convergence on
$J$.  If $J$ meets $\pi/2$, choose a fixed small regular neighborhood of the
sine-cone state at $T_0=\pi/2$.  The matched states at $T_n$ converge to that
state, and the Einstein vector field is smooth on this neighborhood; uniform
ODE dependence, followed on the right by the exact Weyl reflection, therefore
gives convergence with all derivatives on a fixed neighborhood of $\pi/2$.
A finite covering of $J$ proves \eqref{eq:sine-cone-limit} in
$C^\infty_{\mathrm{loc}}((0,\pi)\times SU(3)/T^2)$.  Since $\Ric(h_2)=5h_2$, $g_{\mathrm{sc}}$ has Einstein constant
$6$.  Near either endpoint,
\[
 g_{\mathrm{sc}}
 =dt^2+t^2h_2+O(t^4),
\]
so its completion is conical there.  Proposition~\ref{prop:threshold}
identifies the asymptotic cone of $g_{\RF,*}$ as
$dr^2+r^2h_2$, proving the final assertion.
\end{proof}

The preceding propositions give a two-scale description of the whole
sequence:
\[
 \begin{gathered}
 \text{Ricci-flat threshold bubble}
 \quad\longleftrightarrow\quad
 \text{common Wallach cone}\\
 \longleftrightarrow\quad
 \text{singular sine-cone limit}.
 \end{gathered}
\]
In particular, the same oscillatory cone exponent that creates the infinite
family also quantizes its focal curvature scale.  The sharp curvature
estimate implies $\sup|\Rm(g_n)|\to\infty$; passing to a subsequence on which
this is strictly increasing gives the pairwise nonisometric family asserted
in Theorem~\ref{thm:main}.  The normalization $\Ric=6g$ fixes the homothety
scale.

\begin{corollary}
The $SU(3)$ Wallach ansatz contains an infinite quantized sequence of
pairwise nonisometric Einstein metrics on $S^7$ exhibiting two-ended
Ricci-flat bubbling.
\end{corollary}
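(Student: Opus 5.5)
The corollary is a repackaging of results already established, so I will assemble it from them rather than prove anything new. The steps are existence, smoothness and topology, nonisometry, and bubbling.

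\emph{Existence.} First I invoke the closing argument of Section~8. For every sufficiently late index $k\ge K$, Poincar\'e--Miranda on the phase rectangle $\mathcal R_k$ gives a detector zero $(b_k,\mu_k)$. This uses the uniform matching of Lemma~\ref{lem:matching}, the even margin \eqref{eq:even-face-margin}, and the odd margin \eqref{eq:odd-coefficient-margin}. Lemma~\ref{lem:positivity} and the reflection argument following Remark~\ref{rem:normalization-ledger} turn each zero into a smooth $SU(3)$-invariant metric with $\Ric=6g$. Its group diagram is the standard adjoint diagram $T^2\subset U(2)_\pm\subset SU(3)$, so the manifold is $S^7$. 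The resulting metrics lie in the Wallach ansatz \eqref{eq:metric} by construction.

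\emph{Quantization.} Proposition~\ref{prop:quantization} gives the phase law $\omega\log(1/b_k)=k\pi+\theta_*+o(1)$. It also gives the ratio $b_{k+1}/b_k\to e^{-2\pi/\sqrt{15}}$, which is the sense in which the sequence is quantized.

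\emph{Nonisometry.} This is the only point that needs care. All metrics carry the same Einstein constant $6$, so homotheties are excluded, and isometry invariants can be compared directly. Proposition~\ref{prop:sharp-curvature-scale} gives $\sup_{S^7}|\Rm(g_k)|\asymp b_k^{-2}$. Since $b_k\to0$, this supremum tends to infinity. I will pass to a subsequence along which $\sup|\Rm(g_k)|$ is strictly increasing; this is possible because the quantity is unbounded. The supremum of the curvature norm is an isometry invariant, so the metrics in this subsequence are pairwise nonisometric. The subsequence is still infinite, and the quantization law survives on it, with index shifts recorded.

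\emph{Two-ended bubbling.} Proposition~\ref{prop:two-ended-bubbling} supplies the pointed Cheeger--Gromov convergence of $b_k^{-2}g_k$ at both singular orbits to the complete Ricci-flat threshold metric $g_{\RF,*}$. This is exactly the two-ended Ricci-flat bubbling asserted, and passing to a subsequence does not affect it.

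The main obstacle is only bookkeeping: the subsequence chosen for nonisometry must retain the properties being claimed. Both the phase law and the bubbling statement are stable under passing to subsequences, so no further argument is needed.
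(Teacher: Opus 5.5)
Your proposal is correct and follows essentially the paper's own argument. The paper also assembles the corollary from the closing and quantization results of Section~8 (Proposition~\ref{prop:quantization}) together with Propositions~\ref{prop:sharp-curvature-scale} and~\ref{prop:two-ended-bubbling}, and it obtains nonisometry exactly as you do, by passing to a subsequence along which $\sup_{S^7}|\Rm(g_n)|\to\infty$ is strictly increasing, with $\Ric=6g$ fixing the homothety scale; your remark that the phase law is retained with the original indices (so consecutive ratios along the subsequence become $e^{-\pi(k_{j+1}-k_j)/\omega}(1+o(1))$) is the right bookkeeping and is slightly more explicit than the paper.
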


\bigskip

\noindent
\textsc{Anna Siffert}\\
Universit\"at M\"unster, Mathematisches Institut,
Einsteinstr.\ 62, 48149 M\"unster, Germany\\
\texttt{asiffert@uni-muenster.de}

\end{document}